\newtheorem{example}[theorem]{Example}
\newtheorem{remark}[theorem]{remark}
\newcommand*\fvec[1]{\ensuremath{\mathbf{#1}}}                                	
\newcommand*\dt[0]{\frac{d}{d\,t}\,}					                      	
\newcommand{\abs}[1]{\left\lvert{#1}\right\rvert}                     			
\newcommand{\norm}[1]{\left\lVert{#1}\right\rVert}                     		
\newcommand*\mc[0]{\mathcal}                                                  		
\newcommand*\mbb[0]{\mathbb}                                                  		
\DeclareMathOperator*{\sinc}{sinc} 	                                       		
\newcommand{\until}[1]{\{1,\dots, #1\}}
\newcommand{\subscr}[2]{#1_{\textup{#2}}}
\newcommand{\setdef}[2]{\{#1 \; | \; #2\}}
\newcommand{\map}[3]{#1: #2 \rightarrow #3}
\newcommand{\real}{\mathbb{R}}
\newcommand\oprocendsymbol{\hbox{$\square$}}
\newcommand\oprocend{\relax\ifmmode\else\unskip\hfill\fi\oprocendsymbol}
\title{On the Critical Coupling for Kuramoto Oscillators\thanks{This work
    was supported in part by NSF grants IIS-0904501 and CNS-0834446.}  }
\author{Florian D\"orfler \and Francesco Bullo%
  \thanks{Florian D\"orfler and Francesco Bullo are with the Center for
    Control, Dynamical Systems and Computation, University of California at
    Santa Barbara, Santa Barbara, CA 93106, {\tt\small \{dorfler,
      bullo\}@engineering.ucsb.edu}} }
\begin{document}

\maketitle



\begin{abstract}
The celebrated Kuramoto model captures various synchronization phenomena in biological and man-made dynamical systems of coupled oscillators. It is well-known that there exists a critical coupling strength among the oscillators at which a phase transition from incoherency to synchronization occurs. This paper features four contributions. First, we characterize and distinguish the different notions of synchronization used throughout the literature and formally introduce the concept of phase cohesiveness as an analysis tool and performance index for synchronization. Second, we review the vast literature providing necessary, sufficient, implicit, and explicit estimates of the critical coupling strength in the finite and infinite-dimensional case and for both first-order and second-order Kuramoto models. Third, we present the first explicit necessary and sufficient condition on the critical coupling strength to achieve synchronization in the finite-dimensional Kuramoto model for an arbitrary distribution of the natural frequencies. The multiplicative gap in the synchronization condition yields a practical stability result determining the admissible initial and the guaranteed ultimate phase cohesiveness as well as the guaranteed asymptotic magnitude of the order parameter. As supplementary results, we provide a statistical comparison of our synchronization condition with other conditions proposed in the literature, and we show that our results also hold for switching and smoothly time-varying natural frequencies. Fourth and finally, we extend our analysis to multi-rate Kuramoto models consisting of second-order Kuramoto oscillators with inertia and viscous damping together with first-order Kuramoto oscillators with multiple time constants. We prove that such a heterogenous network is locally topologically conjugate to a first-order Kuramoto model with scaled natural frequencies. Finally, we present necessary and sufficient conditions for almost global  phase synchronization and local frequency synchronization in the multi-rate Kuramoto model. Interestingly, our provably correct synchronization conditions do not depend on the inertiae which contradicts prior observations on the role of inertial effects in synchronization of second-order Kuramoto oscillators.
\end{abstract}


\begin{keywords} 
synchronization, coupled oscillators, Kuramoto model
\end{keywords}


\pagestyle{myheadings}
\thispagestyle{plain}
\markboth{F. D\"orfler and F. Bullo}{On the Critical Coupling for Kuramoto Oscillators}



\section{Introduction}

A classic and celebrated model for the synchronization of coupled oscillators is due to Yoshiki Kuramoto \cite{YK:75}. The {\it Kuramoto model} considers $n \geq 2$ coupled oscillators each represented by a phase variable $\theta_{i} \in \mbb T^{1}$, the 1-tours, and a natural frequency $\omega_{i} \in \mbb R$. The system of coupled oscillators obeys the dynamics
\begin{equation}
	\dot \theta_{i}
	=
	\omega_{i} - \frac{K}{n} \sum\limits_{j=1}^{n} \sin(\theta_{i} - \theta_{j})
	\,,
	\quad i \in \until n
	\label{eq: Kuramoto model}
	\,,
\end{equation}
where  $K  > 0$ is the coupling strength among the oscillators.
 
The Kuramoto model \eqref{eq: Kuramoto model} finds application in various biological synchronization phenomena, and we refer the reader to the excellent reviews \cite{SHS:00,JAA-LLB-CJPV-FR-RS:05} for various references. Recent technological applications of the Kuramoto model include motion coordination of particles \cite{RS-DP-NEL:07}, synchronization in coupled Josephson junctions \cite{KW-PC-SHS:98}, transient stability analysis of power networks \cite{FD-FB:09o-arxiv}, and deep brain stimulation \cite{PAT:03}.


\subsection*{The Critical Coupling Strength}
Yoshiki Kuramoto himself analyzed the model \eqref{eq: Kuramoto model} based on the
order parameter $r e^{\mathrm{i} \psi} = \frac{1}{n} \sum_{j=1}^{n}
e^{\mathrm{i} \theta_{j}}$, which corresponds the centroid of all
oscillators when represented as points on the unit circle in $\mbb
C^{1}$. The magnitude of the order parameter can be understood as a measure
of synchronization. If the angles $\theta_{i}(t)$ of all oscillators are identical, then $r=1$, and if all
oscillators are spaced equally on the unit circle (splay state), then
$r=0$. With the help of the order parameter, the Kuramoto model \eqref{eq:
  Kuramoto model} can be written in the insightful form
\begin{equation}
	\dot \theta_{i} = \omega_{i} - K r \sin(\theta_{i} - \psi)
	\,,
	\quad i \in \until n
	\label{eq: Kuramoto model with order parameter}
	\,.
\end{equation}
Equation \eqref{eq: Kuramoto model with order parameter} gives the
intuition that the oscillators synchronize by coupling to a mean field
represented by the order parameter $r e^{\mathrm{i} \psi}$. Intuitively,
for small coupling strength $K$ each oscillator rotates with its natural
frequency $\omega_{i}$, whereas for large coupling strength $K$ all angles
$\theta_{i}(t)$ will be entrained by the mean field $r e^{\mathrm{i} \psi}$
and the oscillators synchronize. The threshold from incoherency to
synchronization occurs for some critical coupling
$\subscr{K}{critical}$. This phase transition has been the source of
numerous research papers starting with Kuramoto's own insightful and
ingenuous analysis \cite{YK:75,YK:84}. For instance, since $r \leq 1$, no
solution of \eqref{eq: Kuramoto model with order parameter} of the form
$\dot \theta_{i}(t) = \dot \theta_{j}(t)$ can exist if $K < |\omega_{i} -
\omega_{j}|/2$. Hence, $K \geq |\omega_{i} - \omega_{j}|/2$ provides a
necessary synchronization condition and a lower bound for
$\subscr{K}{critical}$. Various various necessary, sufficient, implicit, and explicit estimates of the critical coupling strength
$\subscr{K}{critical}$ for both the on-set as well as the ultimate stage of
synchronization have been derived in the vast literature on the Kuramoto model \cite{YK:75,SHS:00,JAA-LLB-CJPV-FR-RS:05,RS-DP-NEL:07,FD-FB:09o-arxiv,YK:84,GBE:85,EAM-EB-SHS-PS-TMA:09,JLvH-WFW:93,NC-MWS:08,AJ-NM-MB:04,SJC-JJS:10,GSS-UM-FA:09,FDS-DA:07,MV-OM:08,REM-SHS:05,DA-JAR:04,ZL-BF-MM:07,EC-PM:08,MV-OM:09,LB-LS-ADG:09,AF-AC-WPL:10,SYH-TH-KJH:10}. To date	, only explicit and sufficient (or necessary) bounds are known for the critical coupling strength $\subscr{K}{critical}$ in the Kuramoto model \eqref{eq: Kuramoto model}, and implicit formulae are available to compute the exact value of $\subscr{K}{critical}$.

\subsection*{The Multi-Rate Kuramoto Model}
As second relevant coupled-oscillator model, consider $m \geq 0$  second-order Kuramoto oscillators with inertia and viscous damping and $n - m \geq 0$ first-order Kuramoto oscillators with multiple time constants. The {\em multi-rate Kuramoto model} evolving on $\mbb T^{n} \times \mbb R^{m}$ then reads as
\begin{align}
	\begin{split}
	M_{i} \ddot \theta_{i} + D_{i} \dot \theta_{i}
	&=
	\omega_{i} - \frac{K}{n} \sum_{i=1}^{n} \sin(\theta_{i}-\theta_{j})
	\,, \quad i \in \until m\,,
	\\
	D_{i} \dot \theta_{i}
	&=
	\omega_{i} - \frac{K}{n} \sum_{i=1}^{n} \sin(\theta_{i}-\theta_{j})
	\,, \quad i \in \{m+1,\dots,n\}\,,
	\end{split}
	\label{eq: multi-rate Kuramoto model}
\end{align}
where $M_{i}>0$, $D_{i}>0$, and $\omega_{i} \in \mathbb R$ for $i \in \until n$ and $K>0$. Note that\,\,we\,\,allow for $m \in \{0,n\}$ such that the model \eqref{eq: multi-rate Kuramoto model} is of purely first or second order, respectively.

The multi-rate Kuramoto model \eqref{eq: multi-rate Kuramoto model} finds explicit application in the classic structure-preserving power network model proposed in \cite{ARB-DJH:81}. For $m=n$, the model \eqref{eq: multi-rate Kuramoto model} is a purely second-order system of coupled, damped, and driven pendula, which has been used, for example, to model synchronization in a population of fireflies \cite{GBE:91}, in coupled Josephson junctions \cite{KW-PC-SHS:98}, and in network-reduced power system models\,\cite{HDC-CCC:95}.

For $m=n$, unit damping $D_{i}=1$, and uniform inertia $M_{i} = M > 0$, the second-order Kuramoto system \eqref{eq: multi-rate Kuramoto model} has been extensively studied in the literature \cite{YPC-SYH-SBH:11,HAT-AJL-SO:97,HAT-AJL-SO:97b,HH-GSJ-MYC:02,HH-MYC-JY-KSS:99,JAA-LLB-RS:00,JAA-LLB-CJPV-FR-RS:05}. The cited results on the inertial effects on synchronization are controversial and report that synchronization is either enhanced or inhibited by sufficiently large (or also sufficiently small) inertia $M$. For the general multi-rate Kuramoto model \eqref{eq: multi-rate Kuramoto model} no exact synchronization conditions are known.

\subsection{Contributions}

The contributions of this paper are four-fold. 
First, we characterize, distinguish, and relate different concepts of synchronization and their analysis methods, which are studied and employed in the networked control, physics, and dynamical systems communities. In particular, we review the concepts of phase synchronization and frequency synchronization, and introduce the notion of phase cohesiveness. In essence, a solution to the Kuramoto model \eqref{eq: Kuramoto model} is phase cohesive if all angles are bounded within a (possibly rotating) arc of fixed length. The notion of phase cohesiveness provides a powerful analysis tool for synchronization and can be understood as a performance index for synchronization similar to the order parameter. 

As second contribution, we review the extensive literature on the Kuramoto model, and present various necessary, sufficient, implicit, and explicit estimates of the critical coupling strength for the finite and infinite-dimensional Kuramoto model in a unified language \cite{YK:75,SHS:00,JAA-LLB-CJPV-FR-RS:05,RS-DP-NEL:07,FD-FB:09o-arxiv,YK:84,GBE:85,EAM-EB-SHS-PS-TMA:09,JLvH-WFW:93,NC-MWS:08,AJ-NM-MB:04,SJC-JJS:10,GSS-UM-FA:09,FDS-DA:07,MV-OM:08,REM-SHS:05,DA-JAR:04,ZL-BF-MM:07,EC-PM:08,MV-OM:09,LB-LS-ADG:09,AF-AC-WPL:10,SYH-TH-KJH:10}. Aside from the comparison of the different estimates of the critical coupling strength, the second purpose of this review is the comparison of the different analysis techniques. Furthermore, we briefly survey the controversial results  \cite{YPC-SYH-SBH:11,HAT-AJL-SO:97,HAT-AJL-SO:97b,HH-GSJ-MYC:02,HH-MYC-JY-KSS:99,JAA-LLB-RS:00,JAA-LLB-CJPV-FR-RS:05} on the role of inertia in second-order Kuramoto models.

As third contribution of this paper, we provide an
explicit necessary and sufficient condition on the critical coupling
strength to achieve exponential synchronization in the finite-dimensional
Kuramoto model for an arbitrary distribution of the natural frequencies
$\omega_{i}$, see Theorem \ref{Theorem: Necessary and Sufficient conditions}. In particular, synchronization occurs for $K >
\subscr{K}{critical} = \subscr{\omega}{max} - \subscr{\omega}{min}$, where
$\subscr{\omega}{max}$ and $\subscr{\omega}{min}$ are the maximum and
minimum natural frequency, respectively. The multiplicative gap $\subscr{K}{critical}/K$ determines the admissible initial and the guaranteed
ultimate level of phase cohesiveness as well as the guaranteed asymptotic
magnitude $r$ of the order parameter. In particular, the ultimate level of
phase cohesiveness can be made arbitrary small by increasing the
multiplicative gap $\subscr{K}{critical}/K$.  This result resembles the
concept of {\it practical stability} in dynamics and control if $K$ and
$\subscr{K}{critical}$ are understood as a synchronization-enhancing gain
and as a measure for the desynchronizing non-uniformity among the
oscillators. Additionally, our main result includes estimates on the
exponential synchronization rate for phase and frequency synchronization. We further provide two supplementary results on our synchronization condition. In statistical studies, we compare our condition to other necessary and explicit or implicit and exact conditions proposed in the literature. Finally, we show that our analysis and the resulting synchronization conditions also hold for  switching and smoothly time-varying natural frequencies.


As fourth and final contribution, we extend our main result on the classic Kuramoto model \eqref{eq: Kuramoto model} to the multi-rate Kuramoto model \eqref{eq: multi-rate Kuramoto model}. We prove a general result that relates the equilibria and local stability properties of forced gradient-like systems to those of dissipative Hamiltonian systems together with gradient-like dynamics and external forcing, see Theorem \ref{Theorem: Properties of Hlambda family}. As special case, we are able to show that the multi-rate Kuramoto model is locally topologically conjugate to a first-order Kuramoto model with scaled natural frequencies, see Theorem \ref{Theorem: Local Equivalence of First and Second Order Synchronization}. Finally, we present necessary and sufficient conditions for almost global stability of phase synchronization and local stability of frequency synchronization in the multi-rate Kuramoto model, see Theorem \ref{Theorem: Exponential Synchronization in the Multi-Rate Kuramoto Model}. Interestingly, the inertial coefficients $M_{i}$ {\em do not affect} the synchronization conditions and the asymptotic synchronization frequency. Moreover, the location and local stability properties of all equilibria are independent of the inertial coefficients $M_{i}$, and so are all local bifurcations and the the asymptotic magnitude of the order parameter. Rather, these quantities depend on the viscous damping parameters $D_{i}$ and the natural frequencies $\omega_{i}$. Of course, the inertial terms still affect the transient synchronization behavior which lies outside the scope of our analysis. These interesting and provably correct findings contradict prior observations on the role of inertia inhibiting or enhancing synchronization in second-order Kuramoto models.

The remainder of this paper is organized as follows. Section \ref{Section: Definition of Sync} reviews different concepts of synchronization and provides a motivating example. Section \ref{Section: Review of Critical Bounds} reviews the literature on the critical coupling strength in the Kuramoto model. Section \ref{Section: New, tight, and explicit bound} presents a novel, tight, and explicit bound on the critical coupling as well as various related properties, performance estimates, statistical studies, and extensions to time-varying natural frequencies. Section \ref{Section: Synchronization of Multi-Rate Kuramoto Models} extends some of these results to the multi-rate Kuramoto model. Finally, Section \ref{Section: Conclusions} concludes the paper.

\subsubsection*{Notation} The {\it torus} is the set $\mbb T^{1} =
{]\!-\!\pi,+\pi]}$, where $-\pi$ and $+\pi$ are associated with each other,
an {\it angle} is a point $\theta \in \mbb T^{1}$, and an {\it arc} is a
connected subset of $\mbb T^{1}$. The product set $\mbb T^{n}$ is the
$n$-dimensional torus.  With slight abuse of notation, let
$|\theta_{1}-\theta_{2}|$ denote the {\it geodesic distance} between two
angles $\theta_{1} \in \mbb T^{1}$ and $\theta_{2} \in \mbb T^{1}$. For
$\gamma\in{[0,\pi]}$, let $\Delta(\gamma) \subset \mbb T^{n}$ be the set of
angle arrays $(\theta_1,\dots,\theta_n)$ with the property that there
exists an arc of length $\gamma$ containing all $\theta_1,\dots,\theta_n$
in its interior. Thus, an angle array $\theta \in \Delta(\gamma)$
satisfies $\max\nolimits_{i,j \in \until n} |\theta_{i} - \theta_{j}| <
\gamma$.  For $\gamma\in{[0,\pi]}$, we also define $\bar\Delta(\gamma)$ to
be the union of the phase-synchronized set $\{\theta \in \mbb T^{n} \;|\;
\theta_{i} = \theta_{j} ,\, i,j \in \until n\}$ and the closure of the open
set $\Delta(\gamma)$. Hence, $\theta \in \bar \Delta(\gamma)$ satisfies
$\max\nolimits_{i,j \in \until n} |\theta_{i} - \theta_{j}| \leq \gamma$;
the case $\theta \in \bar \Delta(0)$ corresponds simply to $\theta$ taking
value in the phase-synchronized set.  

 Given an $n$-tuple $(x_{1},\dots,x_{n})$, let $x \in \mbb R^{n}$ be the associated vector, let $\diag(x_{i}) \in \mbb R^{n}$ be the associated diagonal matrix, and let
$\subscr{x}{max}$ and $\subscr{x}{min}$ be the maximum and minimum elements. The {\it inertia} of a matrix $A \in \mbb R^{n \times n}$ are given by the triple $\{\subscr{\nu}{s},\subscr{\nu}{c},\subscr{\nu}{u}\}$, where $\subscr{\nu}{s}$ (respectively $\subscr{\nu}{u}$) denotes the number of stable (respectively unstable) eigenvalues of $A$ in the open left (respectively right) complex half plane, and $\subscr{\nu}{c}$ denotes the number of center eigenvalues with zero real part. The notation $\mathrm{blkdiag}(A_{1},\dots,A_{n})$ denotes the block-diagonal matrix with matrix blocks $A_{1},\dots,A_{n}$. Finally, let $I_{n}$ be the $n$-dimensional identity matrix, and let $\fvec 1_{p \times q}$ and $\fvec 0_{p \times q}$ denote the $p \times q$ dimensional matrix with unit entries and zero entries, respectively.

\section{Phase Synchronization, Phase Cohesiveness, and Frequency Entrainment}
\label{Section: Definition of Sync}

Different levels of synchronization are typically distinguished for the Kuramoto model \eqref{eq: Kuramoto model}. 
The case when all angles $\theta_{i}(t)$ converge exponentially to a common angle $\theta_{\infty} \in \mbb T^{1}$ as $t \to \infty$ is referred to as {\it exponential phase synchronization} and can only occur if all natural frequencies are identical. 
If the natural frequencies are non-identical, then each pairwise distance $|\theta_{i}(t) - \theta_{j}(t)|$ can converge to a constant value, but this value is
not necessarily zero. The following concept of phase cohesiveness addresses exactly this point.
A solution $\map{\theta}{\real_{\geq0}}{\mbb T^n}$ to the Kuramoto
model~\eqref{eq: Kuramoto model} is \emph{phase cohesive} if there exists
a length $\gamma \in [0,\pi[$ such that $\theta(t)\in\bar\Delta(\gamma)$ for all
$t\geq0$, i.e., at each time $t$ there exists an arc of length $\gamma$
containing all angles $\theta_i(t)$.  A solution
$\map{\theta}{\real_{\geq0}}{\mbb T^n}$ achieves \emph{exponential frequency synchronization} if all frequencies $\dot \theta_{i}(t)$ converge exponentially fast to a common frequency $\dot \theta_{\infty} \in \mbb R$ as $t \to \infty$.  Finally, a solution
$\map{\theta}{\real_{\geq0}}{\mbb T^n}$ achieves {\it exponential
  synchronization} if it is phase cohesive and it achieves exponential
frequency synchronization.

If a solution $\theta(t)$ achieves exponential frequency synchronization, all phases asymptotically become constant in a rotating coordinate frame with frequency $\dot \theta_{\infty}$, or equivalently, all phase distances $|\theta_{i}(t) - \theta_{j}(t)|$ asymptotically become constant. Hence, the terminology {\it phase locking} is sometimes also used in the literature to define a solution $\map{\theta}{\real_{\geq0}}{\mbb T^n}$ that satisfies $\dot \theta_{i}(t) = \dot \theta_{\infty}$ for all $i \in \until n$ and for all $t \geq 0$ \cite{REM-SHS:05,JLvH-WFW:93,AF-AC-WPL:10} or $\theta_{i}(t) - \theta_{j}(t) = constant$ for all $i,j \in \until n$ and for all $t \geq 0$ \cite{DA-JAR:04,GBE:85,EC-PM:08,MV-OM:08,MV-OM:09}. 
Other commonly used terms in the vast synchronization literature include full, exact, or perfect synchronization for phase synchronization\footnote{Note that \cite{JAA-LLB-CJPV-FR-RS:05} understands phase locking synonymous to phase synchronization as defined above.} and\,\,frequency locking, frequency entrainment, or partial synchronization for frequency synchronization.

In the networked control community, boundedness of angular distances and
consensus arguments are typically combined to establish frequency
synchronization
\cite{NC-MWS:08,AJ-NM-MB:04,GSS-UM-FA:09,FD-FB:09o-arxiv,AF-AC-WPL:10,SYH-TH-KJH:10}.
Our latter analysis in Section \ref{Section: New, tight, and explicit bound}
makes this approach explicit by distinguishing between
phase cohesiveness and frequency synchronization. Note that phase cohesiveness
can also be understood as a performance measure for synchronization and
phase synchronization is simply the extreme case of phase cohesiveness with
$\lim_{t \to \infty} \theta(t) \in \bar\Delta(0)$.
Indeed, if the magnitude $r$ of the order
parameter is understood as an {\it average} performance index for
synchronization, then phase cohesiveness can be understood as a {\it
  worst-case} performance index. The following lemma relates the magnitude
of the order parameter to a guaranteed level of phase cohesiveness.

\begin{lemma}[\bf Phase cohesiveness and order parameter]
\label{Lemma: phase cohesiveness and order parameter}
  Consider an array of $n \geq 2$ angles $\theta = (\theta_{1}, \dots,
  \theta_{n}) \in \mbb T^n$ and compute the magnitude of the order
  parameter $r(\theta) = \frac{1}{n} |\sum_{j=1}^{n}
  e^{\mathrm{i}\theta_{j}} |$.  The following statements hold:
  \begin{enumerate}
  \item[1)] if $\theta \in \bar\Delta(\gamma)$ for some $\gamma \in
    {[0,\pi]}$, then $r(\theta) \in [\cos(\gamma/2), 1]$; and

  \item[2)] if $r(\theta) \in {[0,1]}$ and $\theta \in\bar\Delta(\pi)$,
    then $\theta \in \bar\Delta(\gamma)$ for some $\gamma \in
    [2\arccos(r(\theta)), \pi]$.
\end{enumerate}
\end{lemma}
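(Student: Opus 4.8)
The plan is to prove the two inequalities almost independently, reducing the second claim to the first by a monotonicity argument, and the whole proof is elementary geometry of points on the circle.

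For the first claim, the upper bound $r(\theta)\le 1$ is immediate from the triangle inequality $|\sum_{j=1}^n e^{\mathrm{i}\theta_j}|\le n$. For the lower bound the idea is to project the centroid onto the direction of the \emph{midpoint} of the containing arc rather than onto the order-parameter phase $\psi$. Since $\theta\in\bar\Delta(\gamma)$, I would fix an arc of length $\gamma$ containing all angles and let $\theta_0$ be its midpoint, so that the geodesic distances obey $|\theta_j-\theta_0|\le\gamma/2$ for every $j$. Using the elementary fact $|z|\ge\mathrm{Re}(e^{-\mathrm{i}\theta_0}z)$, valid for any real $\theta_0$ because $e^{-\mathrm{i}\theta_0}z$ has the same modulus as $z$, I get $r(\theta)=\tfrac1n|\sum_{j}e^{\mathrm{i}\theta_j}|\ge\tfrac1n\sum_{j}\cos(\theta_j-\theta_0)$. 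Because $\gamma/2\le\pi/2$ and cosine is decreasing on $[0,\pi/2]$, each summand is at least $\cos(\gamma/2)$, yielding $r(\theta)\ge\cos(\gamma/2)$.

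For the second claim, I would obtain it as a corollary of the first. Since $\theta\in\bar\Delta(\pi)$, set $\gamma:=\max_{i,j\in\until n}|\theta_i-\theta_j|\le\pi$, the diameter, which is precisely the smallest length for which $\theta\in\bar\Delta(\gamma)$. Applying the first claim to this $\gamma$ gives $r(\theta)\ge\cos(\gamma/2)$. As $r(\theta)\in[0,1]$ and $\gamma/2\in[0,\pi/2]$, both quantities lie in the range where $\cos$ and $\arccos$ are monotone inverses, so inverting the inequality yields $\gamma/2\ge\arccos(r(\theta))$, that is $\gamma\ge 2\arccos(r(\theta))$. Together with $\gamma\le\pi$ this places $\gamma$ in $[2\arccos(r(\theta)),\pi]$ with $\theta\in\bar\Delta(\gamma)$, which is the assertion; nonemptiness of the interval uses only $r(\theta)\ge 0$, giving $2\arccos(r(\theta))\le\pi$.

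The arguments are short, so the care lies entirely in the circle geometry and the boundary conventions for $\bar\Delta$. The two points I would be most careful about are: justifying $|\theta_j-\theta_0|\le\gamma/2$ as geodesic distances to the midpoint of the containing arc on $\mbb T^1$, and applying $|z|\ge\mathrm{Re}(e^{-\mathrm{i}\theta_0}z)$ with the \emph{fixed} reference $\theta_0$ rather than the data-dependent phase $\psi$ of the order parameter, since projecting onto $\psi$ would not produce the clean $\cos(\gamma/2)$ constant. For the second claim the only subtlety is the range-checking needed for cosine to be injective ($\gamma\le\pi$, guaranteed by $\theta\in\bar\Delta(\pi)$) and for $\arccos$ to be defined ($r(\theta)\le 1$). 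I do not expect a substantial obstacle; the constant is sharp, as two angles at $\pm\gamma/2$ give equality $r=\cos(\gamma/2)$.
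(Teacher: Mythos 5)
Your proof is correct, and it is rigorous where the paper is pictorial. The paper argues purely geometrically: the order parameter is the centroid of the phasors; statement 1 is dispatched by ``elementary geometric arguments'' about the convex hull of an arc of length $\gamma$ (whose closest point to the origin is the midpoint of the chord, at distance $\cos(\gamma/2)$, cf.\ its Figure 2.1); and statement 2 is proved by a \emph{second}, separate geometric construction, namely that the smallest arc centered at $\psi$ whose convex hull contains a centroid of magnitude $r$ has length $2\arccos(r)$. Your projection inequality $r(\theta)\geq\frac1n\sum_j\cos(\theta_j-\theta_0)\geq\cos(\gamma/2)$ is exactly the supporting-hyperplane version of the paper's convex-hull picture, and your observation that one must project onto the \emph{fixed} arc midpoint $\theta_0$ rather than the data-dependent phase $\psi$ is the right one — it is what turns the informal geometry into a two-line estimate. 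For statement 2 you genuinely diverge: instead of a fresh geometric argument you invert statement 1 via monotonicity of cosine applied to the minimal containing arc, so both statements flow from a single inequality. The one step that deserves explicit justification is your identification of the minimal containing arc length with the diameter $\max_{i,j\in\until{n}}|\theta_i-\theta_j|$: this is \emph{false} on the whole torus (three equally spaced angles have diameter $2\pi/3$ but minimal containing arc $4\pi/3$) and is rescued precisely by the hypothesis $\theta\in\bar\Delta(\pi)$, under which any arc wrapping the other way has length at least $\pi$ and hence cannot beat the span; since you invoke that hypothesis, and the span configuration with two extreme angles at geodesic distance $\gamma\leq\pi$ indeed lies in $\bar\Delta(\gamma)$ by the closure convention, your reduction is sound. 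Net comparison: the paper's route keeps the two statements visually symmetric but leaves both at the level of pictures; yours trades the second picture for a monotonicity argument and yields a proof that is shorter, sharper (your extremal example at $\pm\gamma/2$ shows the constant is tight), and fully rigorous.
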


\begin{proof}
  As customary, we abbreviate $r(\theta)$ with $r$ in what follows.  The
  order parameter $r e^{\mathrm{i}\psi}$ is the centroid of all {\it
    phasors} $e^{\mathrm{i} \theta_{j}}$ corresponding to the phases
  $\theta_{j}$ when represented as points on the unit circle in $\mbb
  C^{1}$. Hence, for $\theta \in \bar\Delta(\gamma)$, $\gamma \in
  {[0,\pi]}$, it follows that $r$ is contained in the convex hull of the
  arc of length $\gamma$, as illustrated in Figure \ref{Fig: arc and its
    convex hull}.
  \begin{figure}[htbp]
    \centering{
      \includegraphics[scale=0.45]{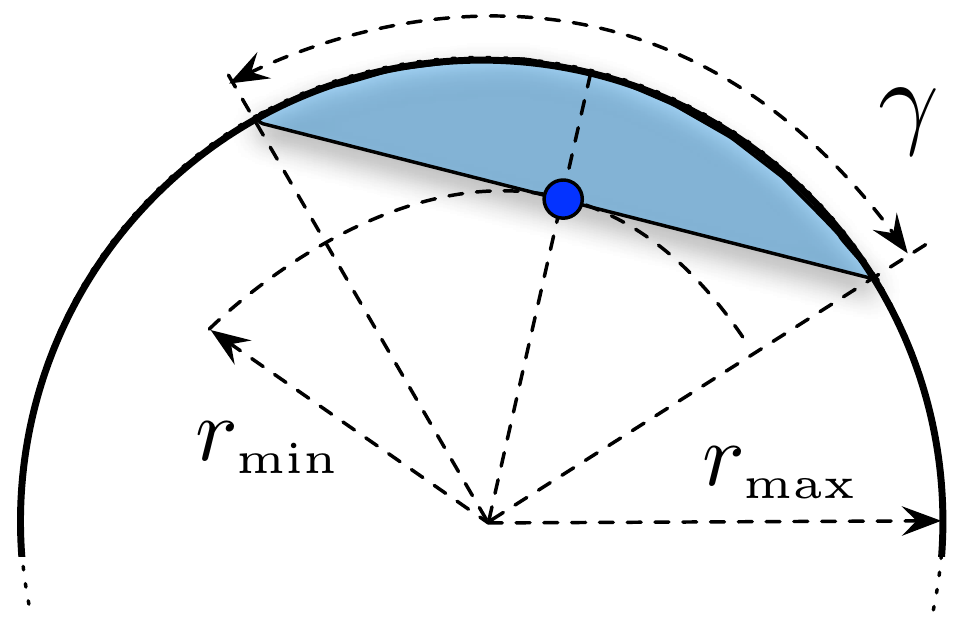}
      \caption{Schematic illustration of an arc of length $\gamma \in {[0,\pi]}$, its
        convex hull (shaded), and the location {\large\bf${\color{blue}\fvec\bullet}$} of the corresponding order parameter $r e^{\mathrm{i}\psi}$ with minimum magnitude $\subscr{r}{min}$.}
      \label{Fig: arc and its convex hull}
    }
  \end{figure}
  Let $\gamma \in {[0,\pi]}$ be fixed and let $\theta \in
  \bar\Delta(\gamma)$. It follows from elementary geometric arguments that
  $\cos(\gamma/2) = \subscr{r}{min} \leq r \leq \subscr{r}{max} = 1$, which
  proves statement 1). Conversely, if $r$ is fixed and $\theta \in
  \bar\Delta(\pi)$, then the centroid $r e^{\mathrm{i}\psi}$ is always
  contained within the convex hull of the semi-circle $\bar\Delta(\pi)$
  (centered at $\psi$). The smallest arc whose convex hull contains the
  centroid $r e^{\mathrm{i}\psi}$ is the arc of length\,\,$\gamma =
  2\arccos(r)$ (centered at $\psi$), as illustrated in Figure \ref{Fig: arc
    and its convex hull}. This proves statement 2).
\end{proof}

In the physics and dynamical systems community exponential synchronization is usually analyzed in relative coordinates. For instance, since the average frequency $\frac{1}{n} \sum_{i=1}^{n} \dot \theta_{i}(t) = \frac{1}{n} \sum_{i=1}^{n} \omega_{i} \triangleq \subscr{\omega}{avg}$ is constant, the Kuramoto model \eqref{eq: Kuramoto model} is sometimes \cite{MV-OM:08,REM-SHS:05} analyzed with respect to a rotating frame in the coordinates $\xi_{i} = \theta_{i} - \subscr{\omega}{avg} t \pmod{2\pi}$, $i \in \until n$, corresponding to a deviation from the average angle. The existence of an exponentially stable one-dimensional (due to translational invariance) equilibrium manifold in $\xi$-coordinates then implies local stability of phase-locked solutions and exponential synchronization. Alternatively, the translational invariance can be removed by formulating the Kuramoto model \eqref{eq: Kuramoto model} in {\it grounded coordinates}  $\delta_i=\theta_i-\theta_n$, for $i\in\until{n-1}$ \cite{FD-FB:09o-arxiv,DA-JAR:04}. We refer to \cite[Lemma IV.1]{FD-FB:09o-arxiv} for a geometrically rigorous characterization of the grounded $\delta$-coordinates and the relation of exponential stability in $\delta$-coordinates and exponential synchronization in $\theta$-coordinates.

The following example of two oscillators illustrates the notion of phase cohesiveness, applies graphical synchronization analysis techniques, and points out various important geometric subtleties occurring on the compact state space\,$\mbb T^{2}$.

\begin{example}[\bf Two oscillators]
\label{Example: Two coupled Kuramoto Oscillators}
\normalfont Consider $n=2$ oscillators with $\omega_{2}>\omega_{1}$. We restrict our attention to angles contained in an open
half-circle: for angles $\theta_{1}$, $\theta_{2}$ with $|\theta_{2} -
\theta_{1}|<\pi$, we define the {\it angular difference} $\theta_{2}-\theta_{1}$ to be the number in
${]\!-\!\pi,\pi[}$ with magnitude equal to the geodesic distance
$|\theta_{2} - \theta_{1}|$ and with positive sign iff the
counter-clockwise path length from $\theta_{1}$ to $\theta_{2}$ on $\mbb
T^{1}$ is smaller than the clockwise path length. With this definition the two-dimensional Kuramoto dynamics $(\dot \theta_{1},\dot \theta_{2})$ can be reduced to the 
scalar difference dynamics\,\,$\dot \theta_{2} - \dot \theta_{1}$. After scaling time as $t \mapsto t (\omega_{2} - \omega_{1})$
and introducing $\kappa = K/(\omega_{2} - \omega_{1})$ the difference dynamics are
\begin{equation}
	\dt (\theta_{2} - \theta_{1} )
	=
	f_{\kappa}(\theta_{2} - \theta_{1})
	:=
	1 - \kappa \sin(\theta_{2} - \theta_{1})
	\label{eq: difference dynamics for two oscillators}
	\,.
\end{equation}
The scalar dynamics \eqref{eq: difference dynamics for two oscillators} can be analyzed graphically by plotting the vector field $f_{\kappa}(\theta_{2} - \theta_{1})$ over the difference variable $\theta_{2} - \theta_{1}$, as in Figure \ref{Fig: vector field for two oscillators}. 
Figure \ref{Fig: vector field for two oscillators} displays a saddle-node bifurcation at $\kappa=1$. For $\kappa < 1$ no equilibrium of \eqref{eq: difference dynamics for two oscillators} exists, and for $\kappa >1$ an asymptotically stable equilibrium $\subscr{\theta}{stable} = \arcsin(\kappa^{-1}) \in {]0,\pi/2[}$ together with a saddle point $\subscr{\theta}{saddle} = \arcsin(\kappa^{-1}) \in {]\pi/2,\pi[}$ exists. 
%
For $\theta(0) \in \Delta(|\subscr{\theta}{saddle}|)$ all trajectories converge exponentially to $\subscr{\theta}{stable}$, that is, the oscillators synchronize exponentially. Additionally, the oscillators are phase cohesive iff $\theta(0) \in \bar\Delta(|\subscr{\theta}{saddle}|)$, where all trajectories remain bounded. For $\theta(0) \not \in \bar\Delta(|\subscr{\theta}{saddle}|)$ the difference $\theta_{2}(t) - \theta_{1}(t)$ will increase beyond $\pi$, and by definition will change its sign since the oscillators change orientation. Ultimately, $\theta_{2}(t) - \theta_{1}(t)$ converges to the equilibrium $\subscr{\theta}{stable}$ in the branch where $\theta_{2} - \theta_{1} < 0$. In the configuration space $\mbb T^{2}$ this implies that the distance $\abs{\theta_{2}(t) - \theta_{1}(t)}$ increases to its maximum value $\pi$ and shrinks again, that is, the oscillators are not phase cohesive and revolve once around the circle before converging to the equilibrium manifold. Since $\sin(\subscr{\theta}{stable}) = \sin(\subscr{\theta}{saddle}) = \kappa^{-1}$, strongly coupled oscillators with $\kappa \gg 1$ practically achieve phase synchronization from every initial condition in an open semi-circle. 
In the critical case, $\kappa = 1$, the saddle point at $\pi/2$ is globally attractive but not stable: for $\theta_{2}(0) - \theta_{1}(0) \!=\! \pi/2 + \epsilon$ (with\,$\epsilon \!>\! 0$ sufficiently small), the oscillators are not phase cohesive and revolve around the circle before converging to the saddle equilibrium manifold\,\,in $\mbb T^{2}$, as illustrated in Figure \ref{Fig: evolution in configuration space}. Thus, the saddle equilibrium\,\,manifold is both attractor and separatrix which\,corresponds to a double zero eigenvalue with two dimensional Jordan block in the linearized\,\,case. 
\begin{figure}[h]
    \centering{
    \subfigure[Vector field \eqref{eq: difference dynamics for two oscillators} for  $\theta_{2} - \theta_{1} > 0$]
    {
    \includegraphics[scale=0.36]{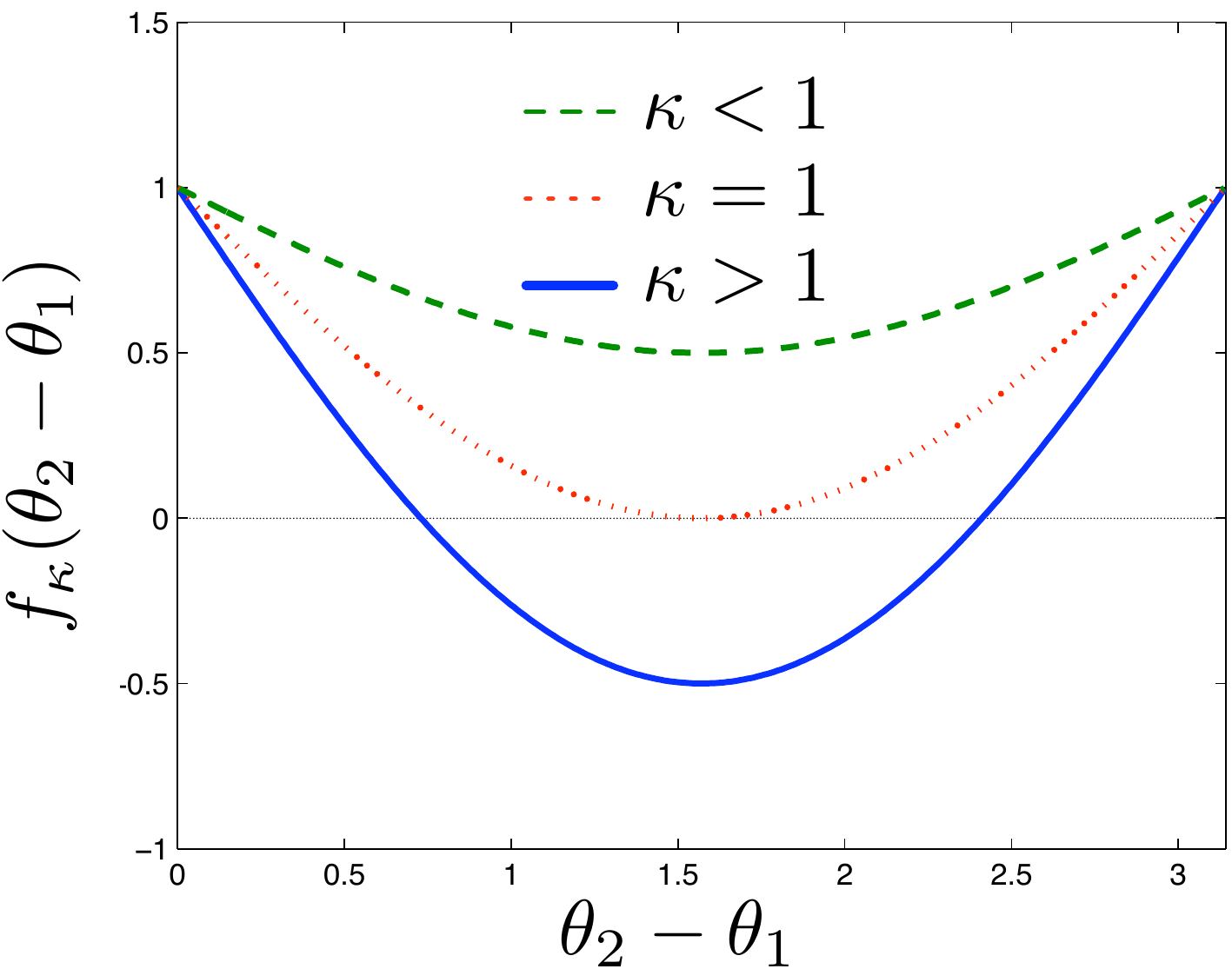}
    \label{Fig: vector field for two oscillators}
    }
   \hspace{1.5cm}
    \subfigure[Trajectory $\theta(t)$ for $\kappa = 1$]
    {
    \includegraphics[scale=0.55]{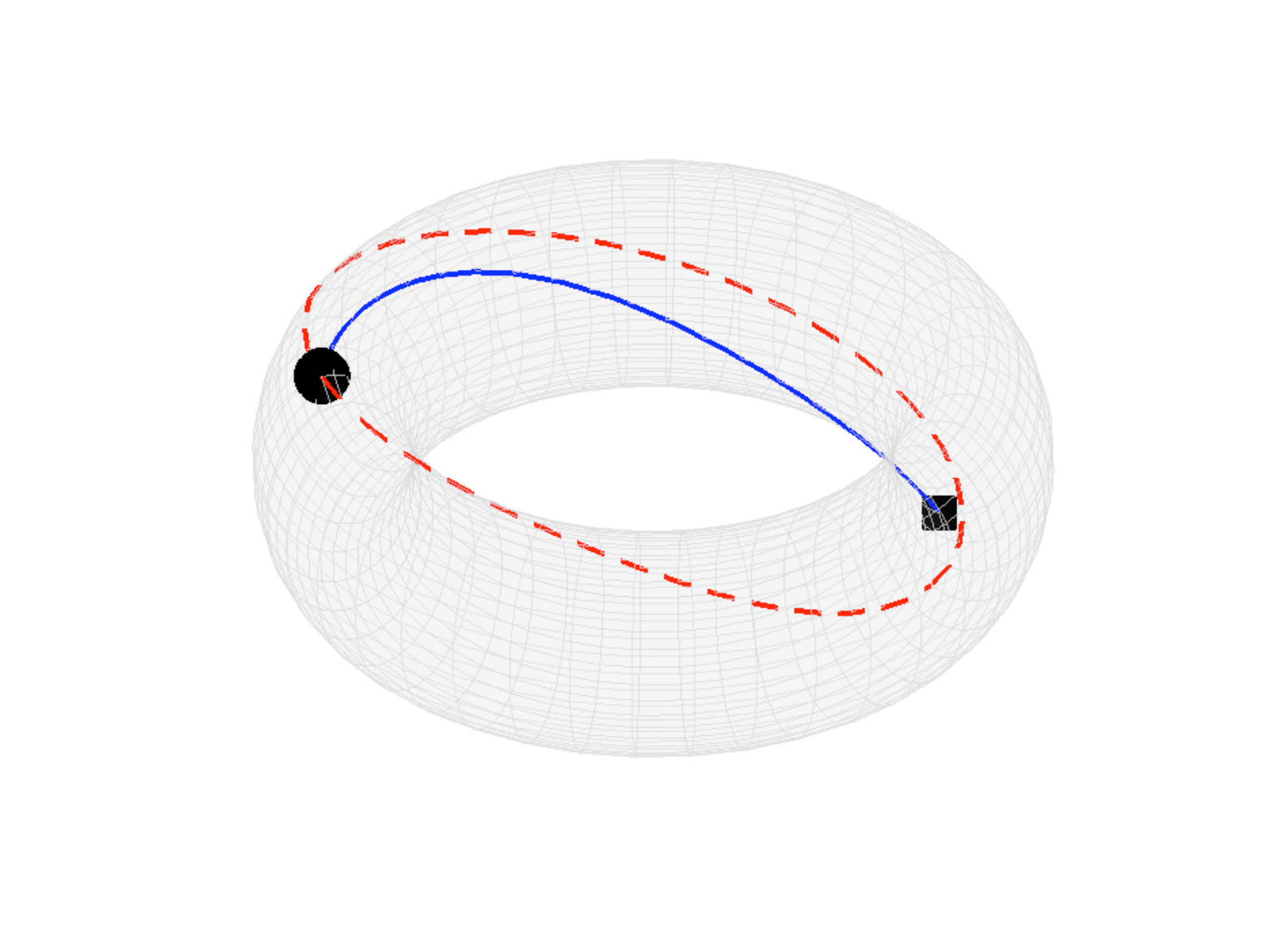}
    \label{Fig: evolution in configuration space}
    }
    \caption{Plot of the vector field \eqref{eq: difference dynamics for two oscillators} for various values of $\kappa$ and a trajectory $\theta(t) \in \mathbb T^{2}$ for the critical case $\kappa = 1$, where the dashed line is the equilibrium manifold and $\blacksquare$ and {\large$\bf\bullet$} correspond to $\theta(0)$ and $\lim_{t \to \infty}\theta(t)$. The non-smoothness of the vector field $f(\theta_{2} - \theta_{1})$ at the boundaries $\{0,\pi\}$ is an artifact of the non-smoothness of the geodesic distance on the state space $\mbb T^{2}$} 
    \label{Fig: two oscillators}
    }
\end{figure}

In conclusion, the simple but already rich $2$-dimensional case shows that two oscillators are phase cohesive and synchronize if and only if $K > \subscr{K}{criticial} \triangleq \omega_{2} - \omega_{1}$, and the ratio
$\kappa^{-1} = \subscr{K}{criticial}/K < 1$ determines the ultimate phase cohesiveness as well as the set of admissible initial conditions. In other words, practical phase
synchronization is achieved for $K \gg \subscr{K}{criticial}$, and phase cohesiveness occurs only for initial
angles $\theta(0) \in \bar\Delta(\gamma)$, $\gamma = \arcsin(\subscr{K}{criticial}/K) \in {]\pi/2,\pi[}$.  This
set of admissible initial conditions $\bar\Delta(\gamma)$ can be enlarged to an
open semi-circle by increasing $K/\subscr{K}{criticial}$. 
Finally, synchronization is lost in a saddle-node bifurcation%
\footnote{For Kuramoto models of dimension $n \geq 3$, this loss of synchrony via a saddle-node bifurcation is only the starting point of a series of bifurcation occurring if $K$ is further decreased, see \cite{YLM-OVP-PAT:05}.}
at $K = \subscr{K}{criticial}$.
In Section \ref{Section: New, tight, and explicit bound} we will generalize all outcomes of this simple
analysis to the case of $n$ oscillators.\oprocend
\end{example}

\section{A Review of Bounds for the Critical Coupling Strength}\label{Section: Review of Bounds}
\label{Section: Review of Critical Bounds}

In case that all natural frequencies are identical, that is, $\omega_{i} \equiv \omega$ for all $i \in \until n$, a transformation to a rotating frame leads to $\omega \equiv 0$. In this case, the analysis of the Kuramoto model \eqref{eq: Kuramoto model} is particularly simple and almost global stability can be derived by various methods. A sample of different analysis schemes (by far not complete) 
includes the contraction property \cite{ZL-BF-MM:07}, quadratic Lyapunov functions \cite{AJ-NM-MB:04}, linearization \cite{EC-PM:08}, or order parameter and potential function arguments \cite{RS-DP-NEL:07}. 

In the following, we review various analysis methods and the resulting bounds on the critical coupling strength for the case of non-identical frequencies. 

\subsection{The Infinite Dimensional Kuramoto Model}

In the physics and dynamical systems communities the Kuramoto model \eqref{eq: Kuramoto model} is typically studied in the continuum limit as the number of oscillators tends to infinity and the natural frequencies obey an integrable distribution function $g: \mbb R \to \mbb R_{\geq 0}$. In this case, the\,\,Kuramoto model is rendered to a first order continuity equation or a second order Fokker-Planck equation when stochasticity is included. For a symmetric, continuous, and unimodal  distribution $g(\omega)$ 
centered above zero, Kuramoto showed in an insightful and ingenuous analysis \cite{YK:75,YK:84} that the incoherent state (i.e., a uniform distribution of\,\,the oscillators on the unit circle) supercritically bifurcates for the critical coupling strength 
\begin{equation}
	\subscr{K}{critical} = \frac{2}{\pi g(0)}
	\label{eq: Kuramoto bound}
	\,.
\end{equation}
The bound \eqref{eq: Kuramoto bound} for the on-set of synchronization has
also been derived by other authors, see \cite{SHS:00,JAA-LLB-CJPV-FR-RS:05}
for further references. In \cite{GBE:85} Ermentrout considered symmetric
distributions $g(\omega)$ with bounded domain $\omega \in
[-\subscr{\omega}{max},\subscr{\omega}{max}]$, and studied the existence of
phase-locked solutions.
The condition for the coupling threshold $\subscr{K}{critical}$ necessary
for the existence of phase-locked solution reads in our notation as
\cite[Proposition 2]{GBE:85}
\begin{equation}
	\frac{\subscr{\omega}{max}}{\subscr{K}{critical}}
	=
	\max_{p \in \mbb R, p \geq 1} \left\{ \frac{1}{p^{2}} \int_{-1}^{1} \sqrt{p^{2} - \omega^{2}} g(\omega) d\omega \right\}
	\,.
	\label{eq: Ermentrout bound}
\end{equation}
Ermentrout further showed that formula \eqref{eq: Ermentrout bound} yields $\subscr{K}{critical} \geq 2 \, \subscr{\omega}{max}$ for symmetric
distributions and $\subscr{K}{critical} \geq 4 \subscr{\omega}{max}/\pi$ whenever $g$ is
non-increasing in $[0,\subscr{\omega}{max}]$. Both of these bounds are tight for a bipolar
(i.e., a bimodal double-delta) distribution and a uniform distribution
\cite[Corollary 2]{GBE:85}, \cite[Sections 3 \& 4]{JLvH-WFW:93}. Similar
results for the bipolar distribution are also obtained in
\cite{JAA-LLB-CJPV-FR-RS:05}, and in \cite{EAM-EB-SHS-PS-TMA:09} the critical coupling for a bimodal Lorentzian distribution is analyzed. For various other references analyzing the
continuum limit of the Kuramoto model we refer the reader to \cite{SHS:00,JAA-LLB-CJPV-FR-RS:05}.

\subsection{Necessary or Sufficient Bounds in the Finite Dimensional Kuramoto Model}

In the finite dimensional case, we assume that the natural frequencies are supported on a compact interval $\omega_{i} \in [\subscr{\omega}{max},\subscr{\omega}{min}] \subset \mbb R$ for all $i \in \until n$. This assumption can be made without loss of generality since the critical coupling $\subscr{K}{critical}$ is not finite for unbounded natural frequencies $\omega_{i}$ \cite[Theorem 1]{MV-OM:08}. 
In \cite{NC-MWS:08,AJ-NM-MB:04} a {\it necessary} condition for the existence of synchronized solutions states the critical coupling in terms of the width of the interval $[\subscr{\omega}{max},\subscr{\omega}{min}]$ as
\begin{equation}
	K > \frac{n(\subscr{\omega}{max} - \subscr{\omega}{min})}{2(n-1)}
	\,.
	\label{eq: Jadbabaie bound}
\end{equation}
Obviously, in the limit as $n \!\to\! \infty$, this bound reduces $(\subscr{\omega}{max} - \subscr{\omega}{min})/2$, the simple bound derived in the introduction of this paper. A looser but still insightful necessary condition is $K \geq 2 \sigma$, where $\sigma$  is the variance of the $\omega_{i}$ \cite{JLvH-WFW:93}, \cite[Corollary 2]{MV-OM:08}. For bipolar distributions $\omega_{i} \in \{ \subscr{\omega}{min},\subscr{\omega}{max} \}$, necessary explicit conditions similar to \eqref{eq: Jadbabaie bound} can be derived for non-complete and highly symmetric coupling topologies \cite{LB-LS-ADG:09}.

Besides the necessary conditions, various bounds {\it sufficient} for synchronization have been derived including estimates of the region of attraction. Typically, these sufficient bounds are derived via incremental stability arguments and are of the form
\begin{equation}
	K >
	\subscr{K}{critical}
	=
	\norm{V\omega}_{p} \cdot f(n,\gamma)
	\label{eq: general sufficient bound}
	\,,
\end{equation}
where $\norm{\cdot}_{p}$ is the $p$-norm and $V$ is a matrix (of yet unspecified row dimension) measuring the non-uniformity among the $\omega_{i}$. For instance, $V = I_{n} - (1/n) \fvec 1_{n \times n}$ gives the deviation from the average natural frequency, $V \omega = \omega - \subscr{\omega}{avg} \fvec 1_{n \times 1}$. Finally, the function $f: \mbb N \times [0,\pi/2[ \to \mbb [1,\infty[$ captures the dependence of $\subscr{K}{critical}$ on the number of oscillators $n$ and the scalar $\gamma$ determining a bound on the admissible pairwise phase differences, which is, for instance, of the form $\norm{(\dots, \theta_{i}(t) - \theta_{j}(t), \dots)}_{p} \leq \gamma$. 

Two-norm bounds, i.e., $p=2$ in condition \eqref{eq: general sufficient bound}, have been derived using quadratic Lyapunov functions in \cite[proof of Theorem 4.2]{NC-MWS:08} and \cite[Theorem V.9]{FD-FB:09o-arxiv}, where the matrix $V \in \mbb R^{n(n-1)/2 \times n}$ is the incidence matrix such that $V \omega$ is the vector of $n(n-1)/2$ pairwise differences $\omega_{i} - \omega_{j}$. A sinusoidal Lyapunov function \cite[Proposition 1]{AF-AC-WPL:10} leads to a two-norm bound with $V = I_{n} - (1/n) \fvec 1_{n \times n}$. Similar two-norm bounds have been obtained by contraction mapping \cite[Theorem 2]{AJ-NM-MB:04} and by contraction analysis \cite[Theorem 3.8]{SJC-JJS:10}, where $V \in \mbb R^{n-1 \times n}$ is an orthonormal projector on the subspace orthogonal to $\fvec 1_{n \times 1}$. For all cited references the region of attraction is given by the $n(n-1)/2$ initial phase differences in two-norm or $\infty$-norm balls satisfying $\norm{V \theta(0)}_{2,\infty} < \pi$. Unfortunately, none of these bounds scales independently of $n$ since $\norm{V \omega}_{2}^{2}$ is a sum of 
at least $n-1$ terms in all cited references
and $f(n,\gamma)$ in condition \eqref{eq: general sufficient bound} is either an increasing \cite{AJ-NM-MB:04} or a constant function of $n$ \cite{FD-FB:09o-arxiv,NC-MWS:08,SJC-JJS:10,AF-AC-WPL:10}.

A scaling of condition \eqref{eq: general sufficient bound} independently of $n$ has been achieved only when considering the width $\subscr{\omega}{max} -\subscr{\omega}{min} = \norm{(\dots, \omega_{i}-\omega_{j},\dots)}_{\infty}$, that is, for $V \omega$ being the vector of all $n(n-1)/2$ pairwise frequency differences and $p=\infty$ in condition \eqref{eq: general sufficient bound}.
A quadratic Lyapunov function leads to $f(n,\gamma) = n/(2 \sin(\gamma))$ \cite[proof of Theorem 4.1]{NC-MWS:08}, a contraction argument leads to $f(n,\gamma) = n/((n-2)\sin(\gamma))$ \cite[Lemma 9]{GSS-UM-FA:09}, and a geometric argument leads to the scale-free bound $f(\gamma) = 1/(2 \sin(\gamma/2) \cos(\gamma))$ \cite[proof of Proposition 1]{FDS-DA:07}. In \cite[Theorem 3.3]{SYH-TH-KJH:10} and in our earlier work \cite[Theorem V.3]{FD-FB:09o-arxiv}, the simple and scale-free bound $f(\gamma) = 1/\sin(\gamma)$ has been derived by analyticity and contraction arguments. In our notation, the region of attraction for synchronization is in all  cited references \cite{NC-MWS:08,GSS-UM-FA:09,FDS-DA:07,SYH-TH-KJH:10,FD-FB:09o-arxiv} given as $\theta(0) \in \bar\Delta(\gamma)$ for $\gamma \in {[0,\pi/2[}$.


\subsection{Implicit and Exact Bounds in the Finite Dimensional Kuramoto Model}

Three recent articles \cite{MV-OM:08,REM-SHS:05,DA-JAR:04} independently derived a set of implicit consistency equations for the {\it exact} critical coupling strength $\subscr{K}{critical}$ for which phase-locked solutions exist. Verwoerd and Mason provided the following implicit formulae to compute $\subscr{K}{critical}$ \cite[Theorem 3]{MV-OM:08}:
\begin{equation}
	\subscr{K}{critical} 
	=
	n u^{*} / \sum\nolimits_{i=1}^{n} \sqrt{1 - (\Omega_{i}/u^{*})^{2}}
	\label{eq: Verwoerd bound 1}
	\,,
\end{equation}
where $\Omega_{i} = \omega_{i} - \frac{1}{n} \sum_{j=1}^{n} \omega_{j}$ and $u^{*} \in [\norm{\Omega}_{\infty},2 \norm{\Omega}_{\infty}]$ is the unique solution to 
\begin{equation}
	2 \sum\nolimits_{i=1}^{n} \!\sqrt{1- (\Omega_{i}/u^{*})^{2}} = \sum\nolimits_{i=1}^{n} \! 1/\sqrt{1 - (\Omega_{i}/u^{*})^{2}}
	\label{eq: Verwoerd bound 2}
	\,.
\end{equation}
Verwoerd and Mason also extended their results to bipartite graphs \cite{MV-OM:09} but did not carry out a stability analysis. The formulae \eqref{eq: Verwoerd bound 1}-\eqref{eq: Verwoerd bound 2} can be reduced exactly to the implicit self-consistency equation derived by Mirollo and Strogatz in \cite{REM-SHS:05}  and by Aeyels and Rogge in \cite{DA-JAR:04}, where additionally a local stability analysis is carried out. The stability analysis \cite{REM-SHS:05,DA-JAR:04} in the $n$-dimensional case shows the same sadle-node bifurcation as the two-dimensional Example \ref{Example: Two coupled Kuramoto Oscillators}: for $K<\subscr{K}{critical}$ there exist no phase-locked solutions, for $K>\subscr{K}{critical}$ there exist stable 
phase-locked solutions, and for $K=\subscr{K}{critical}$ the Jacobians of phase-locked solutions equilibria have a double zero eigenvalue with two-dimensional Jordan block, as illustrated in Example \ref{Example: Two coupled Kuramoto Oscillators}.

In conclusion, in the finite dimensional case various necessary or sufficient explicit bounds on the coupling strength $\subscr{K}{critical}$ are known as well as implicit formulas to compute $\subscr{K}{critical}$ which is provably a threshold for local stability.

\subsection{The Critical Coupling Strength for Second-Order Kuramoto Oscillators}

For $m=n$, $D_{i}=1$, and $M_{i} = M > 0$ the multi-rate Kuramoto model \eqref{eq: multi-rate Kuramoto model} simplifies to a second-order system of coupled oscillators with uniform inertia and unit damping. Such homogenous second-order Kuramoto models have received some attention in the recent literature \cite{YPC-SYH-SBH:11,HAT-AJL-SO:97,HAT-AJL-SO:97b,HH-GSJ-MYC:02,HH-MYC-JY-KSS:99,JAA-LLB-RS:00,JAA-LLB-CJPV-FR-RS:05}. 

In \cite{YPC-SYH-SBH:11} two sufficient synchronization conditions are derived via second-order Gronwall's inequalities resulting in a bound of the form \eqref{eq: general sufficient bound} with $p=\infty$ together with conditions on sufficiently small inertia or sufficiently large inertia \cite[Theorems 5.1 and 5.2]{YPC-SYH-SBH:11}. In \cite[Theorem 4.1 and 4.2]{YPC-SYH-SBH:11} phase synchronization was also found to depend on the inertia, whereas phase synchronization was found to be independent of the inertia in the corresponding continuum limit model  \cite{JAA-LLB-CJPV-FR-RS:05,JAA-LLB-RS:00}. References \cite{HAT-AJL-SO:97b,HAT-AJL-SO:97} observe a discontinuous first-order phase transition (where the incoherent state looses its stability), which is independent of the distribution of the natural frequencies when the inertia $M$ is sufficiently large. This result is also confirmed in \cite{JAA-LLB-CJPV-FR-RS:05,JAA-LLB-RS:00}. In \cite{HH-GSJ-MYC:02} a  second-order Kuramoto model with time delays is analyzed, and a correlation between the inertia and the asymptotic synchronization frequency and asymptotic magnitude of the order parameter magnitude is observed. In \cite{HH-MYC-JY-KSS:99,JAA-LLB-RS:00,JAA-LLB-CJPV-FR-RS:05} it is reported that inertia suppress synchronization for an externally driven or noisy second-order Kuramoto model, and \cite{JAA-LLB-CJPV-FR-RS:05,JAA-LLB-RS:00} explicitly show that the critical coupling $\subscr{K}{critical}$ increases with the inertia $M$ for a Lorentzian or a bi-polar distribution of the natural frequencies.

The cited results \cite{YPC-SYH-SBH:11,HAT-AJL-SO:97,HAT-AJL-SO:97b,HH-GSJ-MYC:02,HH-MYC-JY-KSS:99,JAA-LLB-RS:00,JAA-LLB-CJPV-FR-RS:05} on the inertial effects on synchronization appear conflicting. Possible reasons for this controversy include that the cited articles consider slightly different scenarios (time delays, noise, external forcing), the cited results are only sufficient, the analyses are based on the infinite-dimensional continuum-limit approximation of the finite-dimensional model \eqref{eq: multi-rate Kuramoto model}, and some results stem from insightful but partially incomplete numerical observations and physical intuition.

\section{Necessary and Sufficient Conditions on the Critical Coupling}
\label{Section: New, tight, and explicit bound}

From the point of analyzing or designing a sufficiently strong coupling in the Kuramoto-type applications \cite{SHS:00,JAA-LLB-CJPV-FR-RS:05,RS-DP-NEL:07,KW-PC-SHS:98,FD-FB:09o-arxiv,PAT:03}, the exact formulae \eqref{eq: Verwoerd bound 1}-\eqref{eq: Verwoerd bound 2} to compute the critical coupling have three drawbacks. First, they are implicit and thus not suited for performance or robustness estimates in case of additional coupling strength, e.g., which level of ultimate phase cohesiveness or which magnitude of the order parameter can be achieved for $K = c \cdot\, \subscr{K}{critical}$ with a certain $c>1$. Second, the corresponding region of attraction of a phase-locked equilibrium for a given $K>\subscr{K}{critical}$ is unknown. Third and finally, the particular natural frequencies $\omega_{i}$ (or their distributions) are typically time-varying, uncertain, or even unknown in the applications \cite{SHS:00,JAA-LLB-CJPV-FR-RS:05,RS-DP-NEL:07,KW-PC-SHS:98,FD-FB:09o-arxiv,PAT:03}. In this case, the exact $\subscr{K}{critical}$ needs to be dynamically estimated and re-computed over time, or a conservatively strong coupling $K \!\gg\! \subscr{K}{critical}$ has to be\,\,chosen. 

The following theorem states an explicit bound on the coupling strength together with performance estimates, convergence rates, and a guaranteed semi-global region of attraction for synchronization. Besides improving all other bounds known to the authors, our bound is tight and thus necessary and sufficient when considering arbitrary distributions of the natural frequencies supported on a compact\,interval. 

\begin{theorem}{\bf(Explicit, necessary, and sufficient synchronization condition)}
\label{Theorem: Necessary and Sufficient conditions}
Consider the Kuramoto model \eqref{eq: Kuramoto model} with natural
frequencies $(\omega_1,\dots,\omega_n)$ and coupling strength $K$.
The following three statements are equivalent:

\begin{enumerate}
\item[(i)] the coupling strength $K$ is larger than the maximum
  non-uniformity among the natural frequencies, i.e.,
  \begin{equation}
    K > \subscr{K}{critical} \triangleq 
    \subscr{\omega}{max} - \subscr{\omega}{min}
    \label{eq: key-assumption - Kuramoto}
    \;;
  \end{equation}

\item[(ii)] there exists an arc length $\subscr{\gamma}{max}\in
  {]\pi/2,\pi]}$ such that the Kuramoto model \eqref{eq: Kuramoto model}
  synchronizes exponentially for all possible distributions of the natural
  frequencies supported on $[\subscr{\omega}{min},\subscr{\omega}{max}]$
  and for all initial phases $\theta(0) \in \Delta(\subscr{\gamma}{max})$; and
  

\item[(iii)]  there exists an arc length $\subscr{\gamma}{min} \in {[0,\pi/2[}$ such that the Kuramoto model \eqref{eq: Kuramoto model} has a locally exponentially stable synchronized trajectory in $\bar\Delta(\subscr{\gamma}{min})$ for all possible distributions of the natural frequencies supported on $[\subscr{\omega}{min},\subscr{\omega}{max}]$.

\end{enumerate}

\smallskip 

If the three equivalent cases (i), (ii), and (iii) hold, then the ratio $\subscr{K}{critical}/K$ and the arc lengths $\subscr{\gamma}{min} \in {[0,\pi/2[}$ and $\subscr{\gamma}{max} \in {]\pi/2,\pi]}$ are related uniquely via $\sin(\subscr{\gamma}{min}) = \sin(\subscr{\gamma}{max}) = {\subscr{K}{critical}}/K$, and the following statements hold: \smallskip

\begin{enumerate}

\item[1)] {\bf phase cohesiveness:} the set $\bar\Delta(\gamma)$ is positively invariant for every $\gamma \in [\subscr{\gamma}{min},\subscr{\gamma}{max}]$,\,and each trajectory starting in $\Delta(\subscr{\gamma}{max})$ approaches asymptotically $\bar\Delta(\subscr{\gamma}{min})$;
	
\item[2)] {\bf order parameter:} the asymptotic value of the magnitude of the order parameter denoted by $r_{\infty} \triangleq \lim_{t \to \infty} \frac{1}{n} |\sum_{j=1}^{n}
  e^{\mathrm{i}\theta_{j}(t)} |$is bounded as
  \begin{equation*}
    1  \geq r_{\infty} \geq \cos\!\left(\frac{\subscr{\gamma}{min}}{2}\right) = \sqrt{\frac{1+\sqrt{1- (\subscr{K}{critical}/K)^{2}}}{2}}
    \,;
  \end{equation*}  
  
  \item[3)] {\bf frequency synchronization:} the asymptotic synchronization frequency is
  the average frequency $\subscr{\omega}{avg} = \frac{1}{n}
  \sum_{i=1}^{n} \omega_{i}$, and, given phase cohesiveness in
  $\bar\Delta(\gamma)$ for some fixed $\gamma <\pi/2$, the exponential
  synchronization rate is no worse than $\subscr{\lambda}{fs} = K
  \cos(\gamma)$; and

\item[4)] {\bf phase synchronization:} if $\omega_{i} = s \in \mathbb R$ for all $i \in
  \until n$, then for every $\theta(0) \in \bar\Delta(\gamma)$, $\gamma \in
  {[0,\pi[}$, the phases synchronize exponentially to the average phase
  $\subscr{\theta}{avg}(t) := \frac{1}{n} \sum_{i=1}^{n} \theta(0) + s \cdot t \pmod{2\pi}$ and the exponential synchronization rate is no worse than
  $\subscr{\lambda}{ps} = K \sinc(\gamma)$.

\end{enumerate}
\end{theorem}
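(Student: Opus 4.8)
The plan is to establish the cycle of implications $(i)\Rightarrow(ii)\Rightarrow(iii)\Rightarrow(i)$ and then to read off the performance estimates 1)--4) from the sufficiency argument. The heart of the matter is a single differential inequality for the arc-length function $V(\theta)=\max_{i,j}|\theta_i-\theta_j|$, which on $\bar\Delta(\gamma)$ with $\gamma<\pi$ coincides with the geodesic diameter of the phases and is well-defined precisely because the angles retain a linear ordering inside an arc shorter than a half-circle (the geometric subtlety flagged in Example \ref{Example: Two coupled Kuramoto Oscillators}). First I would write, for indices $M(t)$ and $m(t)$ attaining the maximal and minimal phase, the upper Dini derivative $D^+V=\dot\theta_M-\dot\theta_m$, substitute the dynamics \eqref{eq: Kuramoto model}, and bound $\omega_M-\omega_m\le\subscr{\omega}{max}-\subscr{\omega}{min}=\subscr{K}{critical}$. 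The coupling term telescopes cleanly after centering the angles at the arc midpoint: writing $x_j=\theta_j-(\theta_M+\theta_m)/2\in[-\gamma/2,\gamma/2]$ gives $\sin(\theta_M-\theta_j)-\sin(\theta_m-\theta_j)=2\sin(\gamma/2)\cos(x_j)\ge 2\sin(\gamma/2)\cos(\gamma/2)=\sin\gamma$, so that $D^+V\le\subscr{K}{critical}-K\sin\gamma$. Since $\sin$ is symmetric about $\pi/2$, the equation $\sin\gamma=\subscr{K}{critical}/K$ has exactly the two roots $\subscr{\gamma}{min}\in[0,\pi/2[$ and $\subscr{\gamma}{max}\in{]\pi/2,\pi]}$ whenever $\subscr{K}{critical}/K\le 1$; on $[\subscr{\gamma}{min},\subscr{\gamma}{max}]$ one has $\sin\gamma\ge\subscr{K}{critical}/K$, making $\bar\Delta(\gamma)$ positively invariant and forcing every trajectory started in $\Delta(\subscr{\gamma}{max})$ to contract into $\bar\Delta(\subscr{\gamma}{min})$. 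This proves the phase-cohesiveness half of $(ii)$, statement 1), and the claimed relation between $\subscr{K}{critical}/K$ and the two arc lengths; exponential synchronization in $(ii)$ is completed by the frequency estimate 3) below.

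The implication $(ii)\Rightarrow(iii)$ is immediate, since exponential synchronization from $\Delta(\subscr{\gamma}{max})$ produces a locally exponentially stable phase-locked trajectory contained in the limit set $\bar\Delta(\subscr{\gamma}{min})$. For the closing necessity $(iii)\Rightarrow(i)$ I would argue by contraposition using the worst-case bipolar distribution $\omega_i\in\{\subscr{\omega}{min},\subscr{\omega}{max}\}$, the $n$-dimensional analogue of Example \ref{Example: Two coupled Kuramoto Oscillators}: by symmetry a phase-locked solution collapses the two groups to phases differing by $\phi$, and the equilibrium equations reduce to $\sin\phi=\subscr{K}{critical}/K$. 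A stable solution with $\phi=\subscr{\gamma}{min}<\pi/2$ exists only if $\subscr{K}{critical}/K<1$, i.e. $K>\subscr{K}{critical}$, so if $(i)$ fails then this single distribution already violates $(iii)$. This also shows that the sufficient threshold is attained, hence tight across all admissible distributions.

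The performance estimates then follow by feeding the cohesiveness bound into the remaining structure. For 2) I would apply Lemma \ref{Lemma: phase cohesiveness and order parameter}(1) to the limit set $\bar\Delta(\subscr{\gamma}{min})$ to get $r_\infty\ge\cos(\subscr{\gamma}{min}/2)$, and then rewrite $\cos(\subscr{\gamma}{min}/2)$ through the half-angle identity and $\sin\subscr{\gamma}{min}=\subscr{K}{critical}/K$ to reach the closed form. For 3) I would differentiate \eqref{eq: Kuramoto model} into the frequency dynamics $\ddot\theta_i=-\tfrac{K}{n}\sum_j\cos(\theta_i-\theta_j)(\dot\theta_i-\dot\theta_j)$, which, once the trajectory is confined to $\bar\Delta(\gamma)$ with $\gamma<\pi/2$, is a Laplacian consensus flow with strictly positive weights $\cos(\theta_i-\theta_j)\ge\cos\gamma$; estimating $D^+(\max_i\dot\theta_i-\subscr{\omega}{avg})\le-K\cos\gamma\,(\max_i\dot\theta_i-\subscr{\omega}{avg})$, symmetrically for the minimum, and using the conserved mean $\tfrac1n\sum_i\dot\theta_i\equiv\subscr{\omega}{avg}$ yields convergence at rate $K\cos\gamma$. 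For 4), with $\omega_i\equiv s$ I would pass to the rotating frame and exploit concavity of $\sin$ on $[0,\pi]$, namely $\sin x\ge\sinc(\gamma)\,x$ for $x\in[0,\gamma]$, to turn the same max/min Dini estimate into $D^+(\max_i\theta_i-\subscr{\theta}{avg})\le-K\sinc(\gamma)(\max_i\theta_i-\subscr{\theta}{avg})$, giving exponential phase synchronization to the conserved average phase at rate $K\sinc(\gamma)$.

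The main obstacle I anticipate is making the non-smooth Lyapunov analysis fully rigorous: the functions $V$, $\max_i\dot\theta_i$, and $\min_i\dot\theta_i$ are only Lipschitz, their active indices switch along trajectories, and the whole construction lives on the torus, where ``maximal phase'' and the telescoping identity are meaningful only while the phases stay inside an arc shorter than $\pi$. I would handle this with upper Dini derivatives and the standard comparison lemma for max-type functions, invoking the positive invariance of $\bar\Delta(\gamma)$ from the first step to guarantee a posteriori that the ordering underpinning every inequality never degenerates. A secondary subtlety is verifying that the bipolar distribution is genuinely extremal, so that the sufficient bound from the invariance argument and the necessary bound from the equilibrium analysis meet exactly at $\subscr{K}{critical}=\subscr{\omega}{max}-\subscr{\omega}{min}$.
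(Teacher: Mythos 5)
Your proposal is correct and follows the same architecture as the paper's proof: the identical Dini-derivative contraction estimate $D^{+}V \leq \subscr{K}{critical} - K\sin\gamma$ for the arc-length function (the paper performs exactly your midpoint/sum-to-product manipulation), the same bipolar worst case for necessity, Lemma \ref{Lemma: phase cohesiveness and order parameter} plus the half-angle identity for statement 2), and a consensus argument on the differentiated dynamics for statements 3) and 4). Two of your steps do take genuinely different routes. For frequency synchronization you run a max--min Dini contraction on $\max_i \dot\theta_i - \subscr{\omega}{avg}$ using the conserved mean and the uniform weight bound $a_{ij} \geq (K/n)\cos\gamma$; the paper instead uses the quadratic disagreement function $\|\dot\delta\|^{2}$ and bounds the algebraic connectivity $\lambda_{2}(L(t)) \geq K\cos\gamma$ via Courant--Fischer. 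Both yield the rate $K\cos\gamma$, and the same comparison applies to statement 4); your version is more elementary in that it avoids spectral arguments entirely. For the necessity (iii) $\Rightarrow$ (i) you derive the bipolar equilibrium condition $K\sin\phi = \subscr{K}{critical}$ directly (which indeed holds regardless of the relative sizes of the two groups), whereas the paper imports the nonexistence/instability facts for $K \leq \subscr{K}{critical}$ from the literature on the bipolar case; your self-contained computation is a welcome simplification.

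Two steps need tightening before the argument is complete. First, your claim that ``by symmetry a phase-locked solution collapses the two groups'' is not automatic: a priori, oscillators with equal natural frequencies could occupy distinct phases in a phase-locked state (splay-type configurations exist when frequencies coincide). It is true in the region relevant to (iii): if the solution lies in $\bar\Delta(\gamma)$ with $\gamma < \pi/2$ and $\omega_{i} = \omega_{i'}$, subtracting the two equilibrium equations gives
\begin{equation*}
  \sum\nolimits_{j=1}^{n} 2\cos\Bigl(\tfrac{\theta_{i}+\theta_{i'}}{2}-\theta_{j}\Bigr)\sin\Bigl(\tfrac{\theta_{i}-\theta_{i'}}{2}\Bigr) = 0 \,,
\end{equation*}
and every cosine factor is strictly positive inside an arc of length less than $\pi/2$, forcing $\theta_{i} = \theta_{i'}$; you must include this observation, since otherwise the reduction to the scalar equation $\sin\phi = \subscr{K}{critical}/K$ is unjustified. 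Second, the implication (ii) $\Rightarrow$ (iii) is not ``immediate'': exponential convergence of every trajectory to some phase-locked solution does not by itself make any single synchronized trajectory locally exponentially stable (the limit could vary with the initial condition along a continuum). The paper closes this by invoking the fact that the Kuramoto model has only finitely many equilibria modulo rotational invariance, so that almost every trajectory from the open set $\Delta(\subscr{\gamma}{max})$ converges exponentially to a one-dimensional stable equilibrium manifold; you need this finiteness ingredient, or an equivalent uniformity argument, to conclude.
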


\smallskip

To compare the bound \eqref{eq: key-assumption - Kuramoto} to the bounds presented\,\,in Section \ref{Section: Review of Bounds}, we note from the proof of Theorem \ref{Theorem: Necessary and Sufficient conditions}\,\,that our bound \eqref{eq: key-assumption - Kuramoto} can be equivalently stated as $K > (\subscr{\omega}{max} - \subscr{\omega}{min})/\sin(\gamma)$ and thus improves the sufficient bounds \cite{NC-MWS:08,AJ-NM-MB:04,SJC-JJS:10,GSS-UM-FA:09,FDS-DA:07,AF-AC-WPL:10}. In the simple case $n = 2$ analyzed in Example \ref{Example: Two coupled Kuramoto Oscillators}, the bound \eqref{eq: key-assumption - Kuramoto} is obviously exact and also equals the necessary bound \eqref{eq: Jadbabaie bound}. Furthermore, Theorem \ref{Theorem: Necessary and Sufficient conditions} fully generalizes the observations in  Example \ref{Example: Two coupled Kuramoto Oscillators} to the $n$-dimensional case. In the infinite-dimensional case the bound \eqref{eq: key-assumption - Kuramoto} is tight with respect to the necessary bound for a bipolar distribution $\omega_{i} \in \{\subscr{\omega}{min},\subscr{\omega}{max}\}$ derived in  \cite{JAA-LLB-CJPV-FR-RS:05,GBE:85,JLvH-WFW:93}. Note that condition \eqref{eq: key-assumption - Kuramoto} guarantees synchronization for arbitrary distributions of $\omega_{i}$ supported in $[\subscr{\omega}{min},\subscr{\omega}{max}]$, which can possibly be uncertain, time-varying (addressed in detail in Subsection \ref{Subsection: Extension to time-varying natural frequencies}), or even unknown. Additionally, Theorem \ref{Theorem: Necessary and Sufficient conditions} also guarantees a larger region of attraction $\theta(0) \in \Delta(\subscr{\gamma}{max})$ for synchronization than \cite{NC-MWS:08,AJ-NM-MB:04,SJC-JJS:10,GSS-UM-FA:09,FDS-DA:07,AF-AC-WPL:10,FD-FB:09o-arxiv,SYH-TH-KJH:10}.

Besides the necessary and sufficient bound \eqref{eq: key-assumption -
  Kuramoto}, Theorem \ref{Theorem: Necessary and Sufficient conditions}
gives guaranteed exponential convergence rates for frequency and phase
synchronization, and it establishes a practical stability result in the
sense that the multiplicative gap $\subscr{K}{critical}/K$ in the bound
\eqref{eq: key-assumption - Kuramoto} determines the admissible initial and
the guaranteed ultimate phase cohesiveness as well as the guaranteed
asymptotic magnitude $r$ of the order parameter. In view of this result,
the convergence properties of the Kuramoto model \eqref{eq: Kuramoto model}
are best described by the control-theoretical terminology ``practical phase
synchronization.'' 

The proof of Theorem \ref{Theorem: Necessary and Sufficient conditions}
relies on a contraction argument in combination with a consensus analysis
to show that (i) implies (ii) and thus also 1) - 4) for all natural
frequencies supported on $[\subscr{\omega}{min},\subscr{\omega}{max}]$. In
order to prove the implication (ii) $\implies$ (i), we show that the bound
\eqref{eq: key-assumption - Kuramoto} is tight: if (i) is not satisfied, then
exponential synchronization cannot occur for a bipolar distribution of the
natural frequencies. Finally, the equivalence (i), (ii) $\Leftrightarrow$ (iii) follows from the definition of exponential synchronization and by basic arguments from ordinary differential equations

\begin{proof}{\bf Sufficiency (i) $\implies$ (ii):}
We start by proving the positive invariance of $\bar\Delta(\gamma)$, that is, phase cohesiveness\,in $\bar\Delta(\gamma)$ for some $\gamma \in {[0,\pi]}$. Recall the geodesic distance on\,the torus $\mathbb{T}^1$ and define the\,non-smooth\,function\,$\map{V}{\mathbb{T}^n}{[0,\pi]}$,
\begin{equation*} 
  	V(\psi) =  \max\setdef{|\psi_i-\psi_j|}{i,j\in\until{n}}.
\end{equation*} 
The arc containing all initial phases has two boundary points: a counterclockwise maximum and a counterclockwise minimum. If we let $\subscr{I}{max}(\psi)$ (respectively $\subscr{I}{min}(\psi)$) denote the set indices of the angles $\psi_1,\dots,\psi_n$ that are equal to the counterclockwise maximum (respectively the counterclockwise minimum), then we may write
\begin{equation*}
  	V(\psi) = |\psi_{m'} - \psi_{\ell'}|,   \;\; \text{for all } 
  	m'\in\subscr{I}{max}(\psi)   \text{ and }
  	\ell'\in\subscr{I}{min}(\psi).
\end{equation*}
By assumption, the angles $\theta_i(t)$ belong to the set $\bar\Delta(\gamma)$ at time $t=0$. We aim to show that they remain so for all subsequent times $t>0$. Note that $\theta(t)\in\bar\Delta(\gamma)$ if and only if $V(\theta(t))\leq \gamma \leq \pi$. Therefore, $\bar\Delta(\gamma)$ is positively invariant if and only if $V(\theta(t))$ does not increase at any time $t$ such that $V(\theta(t))= \gamma$. 
The {\it upper Dini derivative} of $V(\theta(t))$ along the dynamical system~\eqref{eq: Kuramoto model} is given by \cite[Lemma 2.2]{ZL-BF-MM:07}
\begin{equation*}
	D^{+} V (\theta(t))
	=
	\lim_{h \downarrow 0}\sup \frac{V(\theta(t+h)) - V(\theta(t))}{h}
	=
	\dot{\theta}_{m}(t) - \dot{\theta}_{\ell}(t)
	\,,
\end{equation*}
where $m \in \subscr{I}{max}(\theta(t))$ and $\ell \in \subscr{I}{min}(\theta(t))$ are indices with the properties that 
$\dot{\theta}_m(t) = \max \setdef{ \dot{\theta}_{m'}(t) }{m'\in \subscr{I}{max}(\theta(t))}$
and
$\dot{\theta}_\ell(t) = \min 	\setdef{ \dot{\theta}_{\ell'}(t) }{\ell'\in \subscr{I}{min}(\theta(t))}$.
Written out in components $D^{+} V (\theta(t))$ takes the form
\begin{equation*}
	D^{+} V (\theta(t))
	=
	\omega_{m} - \omega_{\ell} - \frac{K}{n} \sum_{i=1}^{n} \bigl(  \sin(\theta_{m}(t) - \theta_{i}(t)) 
	+ \sin(\theta_{i}(t)-\theta_{\ell}(t)) \bigr)
	\,.
\end{equation*}
Note that the index $i$ in the upper sum can be evaluated\,for $i \in \until n$, and for $i=m$ and $i=\ell$ one of the two sinusoidal terms is zero and the other one achieves its maximum value in $\bar\Delta(\gamma)$. In the following we apply classic trigonometric arguments from the Kuramoto literature \cite{NC-MWS:08,GSS-UM-FA:09,FDS-DA:07}.  The trigonometric identity $\sin(x) + \sin(y) = 2 \sin(\frac{x+y}{2}) \cos(\frac{x-y}{2})$ leads to 
\begin{multline}
	D^{+} V (\theta(t))
	=
	\omega_{m} - \omega_{\ell} - 
	\frac{K}{n} \sum_{i=1}^{n} \left(  2\, \sin\!\left(\frac{\theta_{m}(t) - \theta_{\ell}(t)}{2}\right) \right.
	\\\times
	\left.\cos\!\left(\frac{\theta_{m}(t) - \theta_{i}(t)}{2} - \frac{\theta_{i}(t) - \theta_{\ell}(t)}{2} \right) \right)
	\label{eq: evolution of V(theta(t))}
	\,.
\end{multline}
The equality $V(\theta(t)) = \gamma$ implies that, measuring distances counterclockwise and modulo additional terms equal to multiples of $2\pi$, we have 
$\theta_{m}(t) - \theta_{\ell}(t) \!=\! \gamma$, 
$0 \!\leq\! \theta_{m}(t) - \theta_{i}(t) \!\leq\! \gamma$, and 
$0 \!\leq\! \theta_{i}(t) - \theta_{\ell}(t) \!\leq\! \gamma$. 
Therefore, $D^{+}V(\theta(t))$ simplifies\,\,to
\begin{equation*}
	D^{+} V (\theta(t))
	\leq
	\omega_{m} - \omega_{\ell} - \frac{K}{n} \sum_{i=1}^{n} \left(  2 \sin\Bigl( \frac{\gamma}{2} \Bigr) \cos\Bigl( \frac{\gamma}{2} \Bigr) \right)
	.
\end{equation*}
Reversing the identity from above as $2 \sin(x) \cos(y) = \sin(x-y) + \sin(x+y)$ yields 
\begin{equation*}
	D^{+} V (\theta(t))
	\leq
	\omega_{m} - \omega_{\ell} - \frac{K}{n} \sum_{i=1}^{n}  \sin(\gamma)
	=
	\omega_{m} - \omega_{\ell} - K \sin(\gamma)
	\,.
\end{equation*}
It follows that the length of the arc formed by the angles is non-increasing in $\bar\Delta(\gamma)$ if for any pair $\{m,\ell\}$ it holds that
$
K \sin(\gamma)
\geq
\omega_{m} - \omega_{\ell}
$,
which is true if and only if
\begin{equation}
	K \sin(\gamma)
	\geq
	\subscr{K}{critical}
	\label{eq: critical bound}
	\,,
\end{equation}
where $\subscr{K}{critical}$ is as stated in equation \eqref{eq: key-assumption - Kuramoto}. For $\gamma \in {[0,\pi]}$ the left-hand side of \eqref{eq: critical bound} is a concave function of $\gamma$ that achieves its maximum at $\gamma^{*}=\pi/2$. Therefore, there exists an open set of arc lengths $\gamma \in {[0,\pi]}$ satisfying equation \eqref{eq: critical bound} if and only if equation \eqref{eq: critical bound} is true with the strict equality sign at $\gamma^{*}=\pi/2$, which corresponds to equation \eqref{eq: key-assumption - Kuramoto} in the statement of Theorem \ref{Theorem: Necessary and Sufficient conditions}. Additionally, if these two equivalent  statements are true, then there exists a unique $\subscr{\gamma}{min}\in {[0,\pi/2[}$ and a $\subscr{\gamma}{max}\in {]\pi/2,\pi]}$ that satisfy equation \eqref{eq: critical bound} with the equality sign, namely $\sin(\subscr{\gamma}{min}) = \sin(\subscr{\gamma}{max}) = {\subscr{K}{critical}}/K$. For every $\gamma \in {[\subscr{\gamma}{min},\subscr{\gamma}{max}]}$ it follows that the arc-length $V(\theta(t))$ is non-increasing, and it is strictly decreasing for $\gamma \in {]\subscr{\gamma}{min},\subscr{\gamma}{max}[}$. Among other things, this shows that statement (i) implies statement 1).

The frequency dynamics of the Kuramoto model \eqref{eq: Kuramoto model} can be obtained by differentiating the Kuramoto model \eqref{eq: Kuramoto model} as
\begin{equation}
	\dt \dot{\theta_{i}}
	=
	\sum\nolimits_{j=1}^{n} a_{ij}(t) \,(\dot{\theta}_{j} - \dot{\theta}_{i})
	\label{eq: consensus system for dot theta}
	\,,
\end{equation}
where $a_{ij}(t) = (K/n) \cos(\theta_{i}(t)-\theta_{j}(t))$. In the case that $K > \subscr{K}{critical}$, we just proved that for every $\theta(0) \in \Delta(\subscr{\gamma}{max})$ and for all $\gamma \in {]\subscr{\gamma}{min},\subscr{\gamma}{max}]}$ there exists a finite time $T \geq 0$ such that $\theta(t) \in \bar\Delta(\gamma)$ for all $t \geq T$, and consequently, the terms $a_{ij}(t)$ are strictly positive for all $t \geq T$. Notice also that system \eqref{eq: consensus system for dot theta} evolves on the tangent space of $\mbb T^{n}$, that is, the Euclidean space $\mbb R^{n}$. Now fix $\gamma \in {]\subscr{\gamma}{min},\pi/2[}$ and let $T \geq 0$ such that $a_{ij}(t)> 0$ for all $t \geq T$, and note that the frequency dynamics \eqref{eq: consensus system for dot theta} can be analyzed as the linear time-varying consensus system 
\begin{equation*}
	\dt\dot \theta
	=
	- L(t) \dot \theta
	\,,
\end{equation*}
where $L(t) = \diag(\sum_{j \neq i}^{n} a_{ij}(t)) - A(t))$ is a symmetric, fully populated, and time-varying Laplacian matrix corresponding to the graph induced by $A(t)$.\,\,%
For each time instant $t \geq T$, the weights $a_{ij}(t)$ are strictly positive, bounded, and continuous functions of time. Consequently, for each $t \geq T$ the graph corresponding to $L(t)$ is always complete and connected. Thus, for each $t \geq 0$ the unique eigenvector corresponding to the zero eigenvalue is $\fvec 1_{n \times 1}$ and $ \fvec 1_{n \times 1}^{T} \dt \dot{\theta} = 0 $. It follows  that $\sum_{i=1}^{n} \dot \theta_{i}(t) = \sum_{i=1}^{n} \omega_{i} = n \subscr{\omega}{avg}$ is a conserved quantity. Consider the {\it disagreement vector}
$
\dot \delta
=
\dot{\theta} - \subscr{\omega}{avg} \fvec 1_{n \times 1}
$, 
as an error coordinate satisfying 
$
\fvec 1_{n \times 1}^{T} \dot \delta 
= 
0
$,
that is, $\dot\delta$ lives in the {\it disagreement eigenspace} of dimension $n-1$ with normal vector $\fvec 1_{n \times 1}$. Since $\subscr{\omega}{avg}$ is constant and $\ker(L(t)) \!\equiv\! \mathrm{span}(\fvec 1_{n \times 1})$, the dynamics \eqref{eq: consensus system for dot theta} read in
$\dot \delta$-coordinates\,\,as
\begin{equation}
	\dt \dot\delta
	=
	- L(t) \, \dot\delta
	\label{eq: consensus system for dot delta-disagreement}
	\,.
\end{equation}
Consider the {\it disagreement function} $\dot\delta \mapsto \|\dot\delta\|^{2} = \dot\delta^{T} \dot\delta$ and its derivative along the disagreement dynamics \eqref{eq: consensus system for dot delta-disagreement} which is
$
\dt
\|\dot\delta\|^{2}
=
- 2 \, \dot \delta^{T} L(t) \dot\delta
$. By the Courant-Fischer Theorem, the time derivative of the disagreement function can be upper-bounded (point-wise in time) by the second-smallest eigenvalue of the Laplacian $L(t)$, i.e., the algebraic connectivity $\lambda_{2}(L(t))$, as $\dt \|\dot\delta\|^{2} \leq -2\lambda_{2}(L(t)) \|\dot\delta\|^{2}$. The algebraic connectivity $\lambda_{2}(L(t))$ can be lower-bounded as
$
	\lambda_{2}(L(t))
	\geq
	K \min\nolimits_{i,j \in \until n}\{\cos(\theta_{i}-\theta_{j}) |\, \theta \in \bar\Delta(\gamma) \} 
	\geq
	K \cos(\gamma)
	= \subscr{\lambda}{fs}
	\,.
$
Thus, the derivative of the disagreement function is bounded as
$
\dt
\|\dot \delta\|
\leq
-
2\,\subscr{\lambda}{fs}
\|\dot \delta\|^{2}
$.
The Bellman-Gronwall Lemma \cite[Lemma A.1]{HKK:02}  yields that the disagreement
vector $\delta(t)$ satisfies 
$ 
\|\dot\delta(t)\| 
\!\leq\! 
\|\dot\delta(0)\| e^{- \subscr{\lambda}{fs} t} 
$ 
for all $t \geq  T$. This proves statement 3) and concludes the proof of the sufficiency (i) $\implies$ (ii).

{\bf Necessity (ii) $\implies$ (i):} To show that the critical
coupling in condition \eqref{eq: key-assumption - Kuramoto} is also necessary for
synchronization, it suffices to construct a counter example for which $K \leq \subscr{K}{critical}$ and the oscillators do not achieve exponential synchronization even though all $\omega_{i} \in {[\subscr{\omega}{min},\subscr{\omega}{max}]}$ and $\theta(0) \in \Delta(\gamma)$ for every $\gamma \in {]\pi/2,\pi]}$. 
    A basic instability mechanism under which synchronization breaks down is caused by a bipolar
distribution of the natural frequencies, as shown in Example \ref{Example: Two
  coupled Kuramoto Oscillators}.


Let the index set $\until n$ be partitioned by the two non-empty sets $\mc I_{1}$ and $\mc I_{2}$. Let $\omega_{i} = \subscr{\omega}{min}$ for $i \in \mc I_{1}$ and $\omega_{i} = \subscr{\omega}{max}$ for $i \in \mc I_{2}$, and assume that at some time $t \geq 0$ it holds that $\theta_{i}(t) \!=\! - \gamma/2$ for $i \in \mc I_{1}$ and $\theta_{i}(t) \!=\! + \gamma/2$ for $i \in \mc I_{2}$ and for some $\gamma \in {[0,\pi[}$. By construction, at time $t$ all oscillators are contained in an arc of length $\gamma \in {[0,\pi[}$. Assume now that $K\!<\!\subscr{K}{critical}$ and the oscillators synchronize. Consider the evolution of the arc length $V(\theta(t))$ given as in \eqref{eq: evolution of V(theta(t))} by
\begin{align*}
	D^{+} V (\theta(t))
	=&\;
	\omega_{m} - \omega_{\ell} - \frac{K}{n} \sum_{i \in \mc I_{1}} \left(  2\, \sin\!\left(\frac{\theta_{m}(t) - \theta_{\ell}(t)}{2}\right) \right.
	\\&\times
	\left.\cos\!\left(\frac{\theta_{m}(t) - \theta_{i}(t)}{2} - \frac{\theta_{i}(t) - \theta_{\ell}(t)}{2} \right) \right)	
	\\&\;	
	-  \frac{K}{n} \sum_{i \in \mc I_{2}} \left(  2\, \sin\!\left(\frac{\theta_{m}(t) - \theta_{\ell}(t)}{2}\right) \,
	 \cos\!\left(\frac{\theta_{m}(t) - \theta_{i}(t)}{2} - \frac{\theta_{i}(t) - \theta_{\ell}(t)}{2} \right) \right)	
	\,,
\end{align*}
where the summation is split according to the partition of $\until n$ into $\mc I_{1}$ and $\mc I_{2}$. 
By construction, we have that $\ell \in \mc I_{1}$, $m \in \mc I_{2}$, $\omega_{\ell} = \subscr{\omega}{min}$, $\omega_{m} = \subscr{\omega}{max}$, $\theta_{i}(t) = \theta_{\ell}(t) = - \gamma/2$ for $i \in \mc I_{1}$, and $\theta_{i}(t) = \theta_{m}(t) = +\gamma/2$ for $i \in \mc I_{2}$. Thus, $D^{+} V (\theta(t))$ simplifies to
\begin{equation*}
	D^{+} V (\theta(t))
	=
	\subscr{\omega}{max} - \subscr{\omega}{min} - \frac{K}{n} \sum_{i \in \mc I_{1}} \left(  2 \sin\Bigl( \frac{\gamma}{2} \Bigr) \cos\Bigl( \frac{\gamma}{2} \Bigr) \right)
	-  \frac{K}{n} \sum_{i \in \mc I_{2}} \left(  2 \sin\Bigl( \frac{\gamma}{2} \Bigr) \cos\Bigl( \frac{\gamma}{2} \Bigr) \right)
	\,.
\end{equation*}
Again, we reverse the trigonometric identity via $2 \sin(x) \cos(y) =$ $ \sin(x-y) + \sin(x+y)$, unite both sums, and arrive at
\begin{equation}
	D^{+} V (\theta(t))
	=
	\subscr{\omega}{max} - \subscr{\omega}{min} - K \sin(\gamma)
	\label{eq: critical bound - necessary}
	\,.
\end{equation}
Clearly, for $K < \subscr{K}{critical}$ the arc length $V(\theta(t)) = \gamma$ is increasing for any arbitrary $\gamma \in {[0,\pi]}$. Thus, the phases are not bounded in $\bar\Delta(\gamma)$. This contradicts the assumption that the oscillators synchronize for $K < \subscr{K}{critical}$ from every initial condition $\theta(0) \in \bar\Delta(\gamma)$. Thus, $\subscr{K}{critical}$ provides the exact threshold. For $K = \subscr{K}{critical}$, we know from \cite{REM-SHS:05,DA-JAR:04} that phase-locked equilibria have a zero eigenvalue with a two-dimensional Jacobian block, and thus synchronization cannot occur. This instability via a two-dimensional Jordan block is also visible in \eqref{eq: critical bound - necessary} since $D^{+} V (\theta(t))$ is increasing for $\theta(t) \in \Delta(\gamma)$, $\gamma \in {]\pi/2,\pi]}$ until all oscillators change orientation, just as in Example \ref{Example: Two coupled Kuramoto Oscillators}. This concludes the proof of the necessity (ii)\,$\implies$\,(i).

{\bf Sufficiency (i),(ii) $\implies$ (iii):} Assume that (i) and (ii) hold and exponential synchronization occurs. When formulating the Kuramoto model \eqref{eq: Kuramoto model} in a rotating frame with frequency $\subscr{\omega}{avg}$, statement 3) implies exponential convergence of the frequencies $\dot \theta_{i}(t)$ to zero. Hence, for all $\theta(0) \in \Delta(\subscr{\gamma}{max})$ every phase $\theta_{i}(t)$ converges exponentially to a constant limit phase given by $\subscr{\theta}{$i$,sync} \triangleq \lim_{t \to \infty} \theta_{i}(t) = \theta_{i}(0) + \int_{0}^{\infty} \dot \theta_{i}(\tau)$, which corresponds to an equilibrium of the Kuramoto model \eqref{eq: Kuramoto model} formulated in a rotating frame. Furthermore, statement 1) implies that these equilibria $(\subscr{\theta}{$1$,sync},\dots, \subscr{\theta}{$n$,sync})$ are contained in $\bar\Delta(\subscr{\gamma}{min})$. Since the Kuramoto model \eqref{eq: Kuramoto model} features only a finite number of fixed points (modulo translational invariance) \cite[Lemma 1.1 and Theorem 4.1]{JB-CIB:82}, almost every trajectory with $\theta(0) \in \Delta(\subscr{\gamma}{max})$ converges exponentially to a one-dimensional stable equilibrium manifold. Hence,  if condition (i) holds, there exists a locally exponentially stable synchronized solution $\theta(t) \in \bar \Delta(\subscr{\gamma}{min})$. 

{\bf Necessity (iii) $\implies$ (i),(ii):} Conversely, assume that condition (i) does not hold, that is, $K \leq \subscr{K}{critical} = \subscr{\omega}{max} - \subscr{\omega}{min}$. We prove the necessity of (i) again by invoking a bipolar distribution of the natural frequencies. In this case, it is known that for $K = \subscr{K}{critical} = \subscr{\omega}{max} - \subscr{\omega}{min}$ there exists a unique equilibrium (in a rotating frame with frequency $\subscr{\omega}{avg}$), and for $K< \subscr{K}{critical}$ there exists no equilibrium \cite[Section 4]{JLvH-WFW:93}. In the latter case, synchronization cannot occur. In the former case, the equilibrium configuration corresponds to the phases arranged in two clusters (sorted according to the bipolar distribution) which are exactly $\pi/2$ apart \cite[Section 4]{JLvH-WFW:93}. Finally, note that such an equilibrium configuration is unstable, as shown by identity \eqref{eq: critical bound - necessary}. We remark that the same conclusions can alternatively be drawn from the implicit equations \eqref{eq: Verwoerd bound 1}-\eqref{eq: Verwoerd bound 2} for the critical coupling. This proves the necessity (iii)\,$\Rightarrow$\,(i),(ii).

By statement 1), the oscillators are ultimately phase cohesive in $\bar\Delta(\subscr{\gamma}{min})$. It follows from Lemma \ref{Lemma: phase cohesiveness and order parameter} that the asymptotic magnitude $r$ of the order parameter satisfies $1  \geq r \geq \cos(\subscr{\gamma}{min}/2)$. The trigonometric identity $\cos(\subscr{\gamma}{min}/2) = \sqrt{(1+\cos(\subscr{\gamma}{min}))/2}$ together with a Pythagorean identity yields the bound in statement 2).

In case that all natural frequencies are identical, that is, $\omega_{i} = s$ for all $i \in \until n$, statement 1) implies that $\subscr{\gamma}{min} = 0$ and $\subscr{\gamma}{max} \uparrow \pi$. In short, the phases synchronize for every $\theta(0) \in \Delta(\pi)$. The coordinate transformation $\theta \mapsto \theta + s t$ yields the dynamics
$
	\dot \theta_{i}
	=
	- \sum_{j=1}^{n} b_{ij}(t) (\theta_{i} - \theta_{j})
$,
where $b_{ij}(t) = (K/n) \sinc(\theta_{i}(t) - \theta_{j}(t))$ is strictly positive for all $t \geq 0$ due to the positive invariance statement 1). Statement 4) can then be proved along the lines of statement 3).
\end{proof}

\subsection{Statistical studies}\label{Subsection: Statistical studies}

Theorem \ref{Theorem: Necessary and Sufficient conditions} places a hard bound on the critical coupling strength $\subscr{K}{critical}$ for all distributions of $\omega_{i}$ supported on the compact interval $[\subscr{\omega}{max},\subscr{\omega}{min}]$. This set of admissible distributions includes the worst-case bipolar  distribution used in the proof of Theorem \ref{Theorem: Necessary and Sufficient conditions}. For a particular distribution $g(\omega)$ supported on $[\subscr{\omega}{min},\subscr{\omega}{max}]$ the bound \eqref{eq: key-assumption - Kuramoto} is only sufficient and possibly a factor 2 larger than the necessary bound\,\,\eqref{eq: Jadbabaie bound}. The exact critical coupling for $g(\omega)$ lies somewhere in between and can be obtained by solving the implicit equations\,\,\eqref{eq: Verwoerd bound 1}-\eqref{eq: Verwoerd bound 2}.

The following example illustrates the average case for natural frequencies sampled from a uniform distribution $g(\omega) = 1/2$ supported for $\omega \in [-1,1]$. Figure \ref{Fig: Kuramoto bounds for uniform distribution} reports numerical findings on the critical coupling strength for $n \in [2,300]$ oscillators in a semi-log plot, where the coupling gains for each $n$ are averaged over 1000 simulations.

\begin{figure}[htb]
	\centering{
	\includegraphics[scale=0.47]{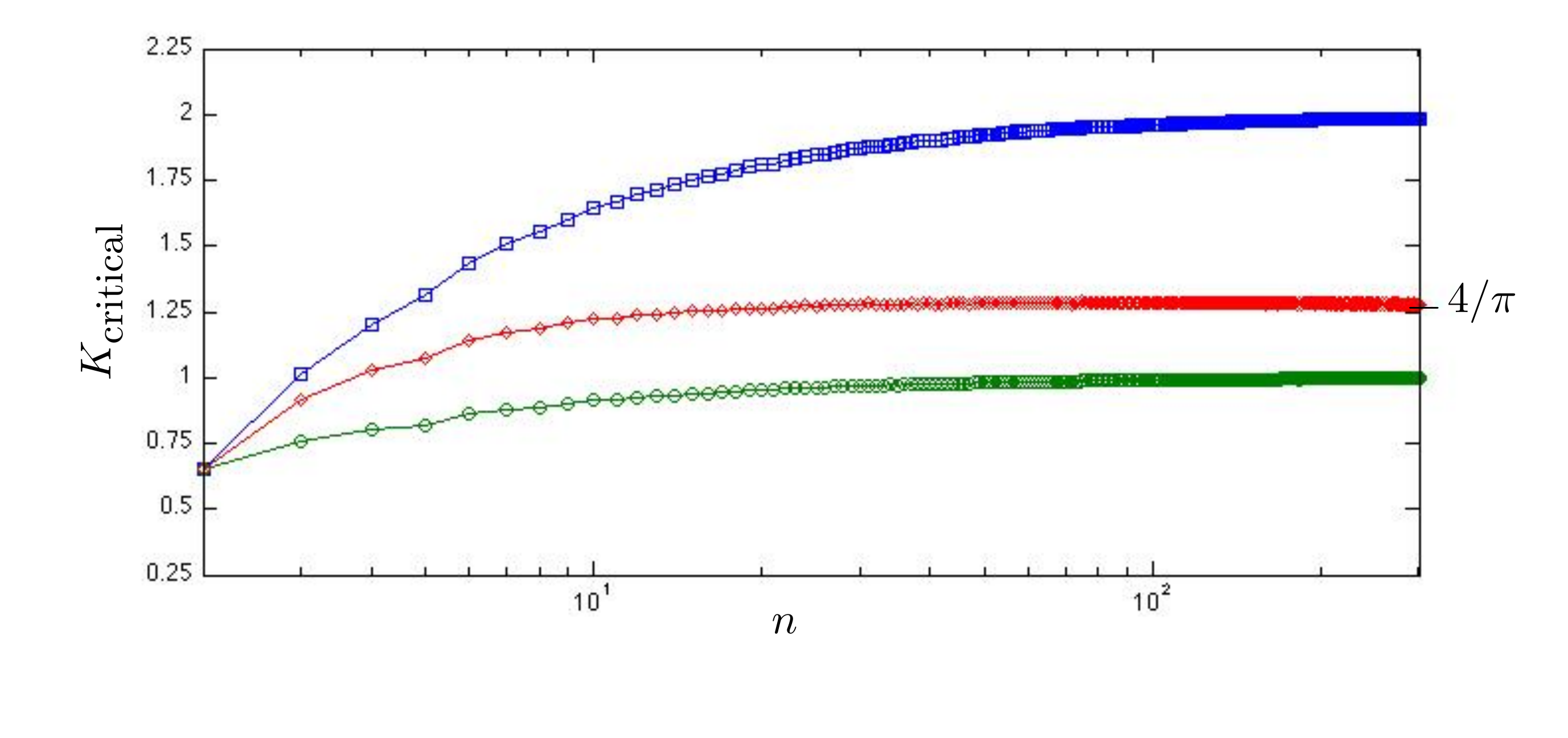}
	\caption{Analysis of the necessary bound \eqref{eq: Jadbabaie bound} ({\bf\large$\color{red}{\circ}$}), the exact bound \eqref{eq: Verwoerd bound 1}-\eqref{eq: Verwoerd bound 2} ({\bf\footnotesize$\color{OliveGreen}\lozenge$}), and the sufficient explicit bound \eqref{eq: key-assumption - Kuramoto} ({\bf\tiny$\color{blue}{\square}$})}
	\label{Fig: Kuramoto bounds for uniform distribution}
	}
\end{figure}
First, note that the three displayed bounds are equivalent for $n=2$ oscillators. As the number of oscillators increases, the sufficient bound \eqref{eq: key-assumption - Kuramoto} clearly converges to $\subscr{\omega}{max}-\subscr{\omega}{min} = 2$, the width of the distribution $g(\omega)$, and the necessary bound \eqref{eq: Jadbabaie bound} accordingly to half of the width. The exact bound \eqref{eq: Verwoerd bound 1}-\eqref{eq: Verwoerd bound 2} quickly converges to $4(\subscr{\omega}{max}-\subscr{\omega}{min})/(2\pi) = 4/\pi$ in agreement with the results \eqref{eq: Kuramoto bound} and \eqref{eq: Ermentrout bound} predicted in the case of a continuum of oscillators. It can be observed that the exact bound \eqref{eq: Verwoerd bound 1}-\eqref{eq: Verwoerd bound 2} is closer to the sufficient and tight bound \eqref{eq: key-assumption - Kuramoto} for a small number of oscillators, i.e., when there are few outliers increasing the width $\subscr{\omega}{max} - \subscr{\omega}{min}$. For large $n$, the sample size of $\omega_{i}$ increases and thus also the number of outliers. In this case, the exact bound \eqref{eq: Verwoerd bound 1}-\eqref{eq: Verwoerd bound 2} is closer to the necessary bound \eqref{eq: Jadbabaie bound}.

\subsection{Extension to time-varying natural frequencies}
\label{Subsection: Extension to time-varying natural frequencies}

One motivation to prefer the explicit and tight bound \eqref{eq: key-assumption - Kuramoto} over the implicit and exact bound \eqref{eq: Verwoerd bound 1}-\eqref{eq: Verwoerd bound 2} are time-varying natural frequencies $\omega_{i}(t)$ bounded in $[\subscr{\omega}{max},\subscr{\omega}{min}]$. We distinguish the two cases of switching and slowly and smoothly time-varying natural frequencies and note that Theorem \ref{Theorem: Necessary and Sufficient conditions} and its proof can be easily extended to these cases.

\subsubsection{\bf Piece-wise constant $\omega_{i}(t)$} 

Consider a sequence of time instances $\{t_{k}\}_{k \in \mbb N}$ such that $t_{0} = 0$ and $t_{k+1} > t_{k}$ for all $k \in \mathbb N$. Assume that the natural frequencies $\omega_{i}(t)$ are constant and bounded in $[\subscr{\omega}{max},\subscr{\omega}{min}]$ within each interval $t \in [t_{k},t_{k+1}[$. At time-point $t_{k+1}$ the natural frequencies may be discontinuous and switch. Note that the synchronization frequency and the corresponding phase-locked equilibria (on a rotating frame) will change with every switching instant. 

In this case, between any two switching instants, $t \in [t_{k},t_{k+1}[$, our analysis still holds and Theorem \ref{Theorem: Necessary and Sufficient conditions} can be applied without any modification. For all time $t \geq 0$ and for all $\theta \in \Delta(\gamma)$, $\gamma \in {]\subscr{\gamma}{min},\subscr{\gamma}{max}]}$, the arc length $V(\theta(t))$ is strictly and uniformly decreasing for {\em any} switching sequence $\{t_{k}\}_{k \in \mbb N}$, i.e, it is a so-called {\em common Lyapunov function}. As an outcome, the ultimate phase cohesiveness in $\bar\Delta(\subscr{\gamma}{min})$ will always be reached asymptotically despite the switching natural frequencies. Furthermore, if there exists a uniform {\em dwell time} $\epsilon > 0$ such that $t_{k+1} - t_{k} \geq \epsilon$ for all $k \in \mbb N$, then the derived synchronization rate $\subscr{\lambda}{fs}$ admits an estimate on $\lim_{t \uparrow t_{k+1}}\|\dot \theta(t) - \subscr{\omega}{avg}(t) \fvec 1_{n \times 1}\|$, that is, how close the oscillators come to frequency synchronization within each interval $[t_{k},t_{k+1}[$. Figure \ref{Fig: switching natural frequencies} illustrates all of these conclusion in a simulation.
\begin{figure}[htbp]
	\centering{
	\includegraphics[scale=0.65]{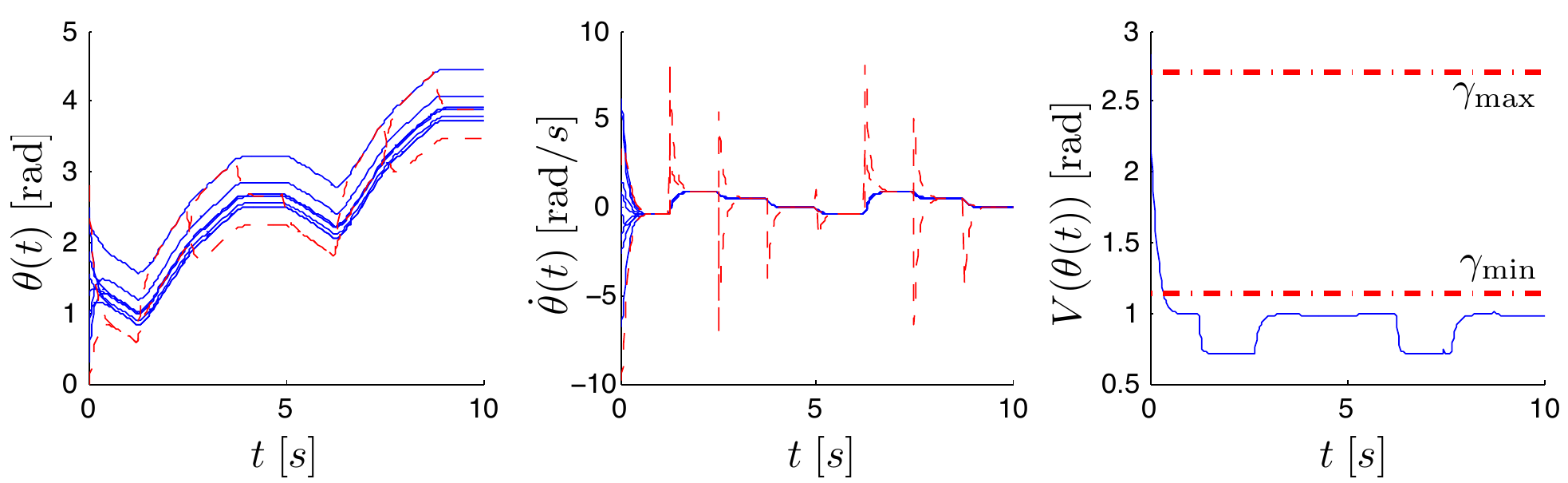}
	\caption{Simulation of a network of $n=10$ Kuramoto oscillators satisfying $K/\subscr{K}{critical} = 1.1$, where the natural frequencies $\omega_{1}(t)$ and $\omega_{n}(t)$ (displayed in red dashed lines) are switching between constant values in $[\subscr{\omega}{min},\subscr{\omega}{max}] = [0,1]$. The simulation illustrates the phase cohesiveness of the angles $\theta(t)$ in $\bar\Delta(\subscr{\gamma}{min})$, the exponential convergence of the frequencies $\dot\theta(t)$ towards $\subscr{\omega}{avg}(t)$ between consecutive switching instances, as well as the monotonicity of $V(\theta(t))$ in $\bar\Delta(\gamma)$ for $\gamma \in [\subscr{\gamma}{min},\subscr{\gamma}{max}]$.}
	\label{Fig: switching natural frequencies}
	}
\end{figure}

In comparison, the analysis schemes \cite{MV-OM:08,DA-JAR:04,REM-SHS:05} have to re-compute the exact implicit bound \eqref{eq: Verwoerd bound 1}-\eqref{eq: Verwoerd bound 2} after every switching instant, since they explicitly make use of the values of $\omega_{i}$ and the corresponding equilibria. 
Obviously, the analysis schemes \cite{MV-OM:08,DA-JAR:04,REM-SHS:05} fail entirely in the case of time-varying frequencies analyzed in the following.

\subsubsection{Slowly and smoothly varying $\omega_{i}(t)$} 

For smooth functions $\omega_{i}(t)$ bounded in $[\subscr{\omega}{max},\subscr{\omega}{min}]$, the proof of phase cohesiveness can be adapted without major modifications. However, the Kuramoto frequency dynamics \eqref{eq: consensus system for dot theta} are rendered to
\begin{equation}
	\dt \dot{\theta_{i}}
	=
	\dot \omega_{i}(t) + 
	\sum\nolimits_{j=1}^{n} a_{ij}(t) \,(\dot{\theta}_{j} - \dot{\theta}_{i})
	\label{eq: consensus system for dot theta -- time-varying omega}
	\,,
\end{equation}
where $a_{ij}(t) = (K/n) \cos(\theta_{i}(t)-\theta_{j}(t))$ as before. The forced frequency dynamics \eqref{eq: consensus system for dot theta -- time-varying omega} can be analyzed on the subspace orthogonal to $\fvec 1_{n \times 1}$ by considering the time-varying disagreement vector 
$
\dot \delta(t)
\triangleq
\dot{\theta}(t) - \subscr{\omega}{avg}(t) \fvec 1_{n \times 1}
$, 
as an error coordinate satisfying 
$
\fvec 1_{n \times 1}^{T} \dot \delta(t) 
= 
0
$.
The frequency dynamics \eqref{eq: consensus system for dot theta -- time-varying omega} read then in $\dot \delta$-coordinates as
\begin{equation}
	\dt \dot\delta
	=
	\dot\Omega(t)
	- L(t) \, \dot\delta
	\label{eq: consensus system for dot delta-disagreement -- time-varying omega}
	\,,
\end{equation}
where $\dot\Omega(t) \triangleq \dot \omega(t) - \subscr{\dot \omega}{avg}(t) \fvec 1_{n \times 1}$. On the subspace orthogonal to $\fvec 1_{n \times 1}$ the dynamics \eqref{eq: consensus system for dot delta-disagreement -- time-varying omega} are exponentially stable for $\dot\Omega(t) \equiv 0$, and a {\it time-varying equilibrium frequency} can be uniquely obtained as $\dot\delta(t) = L^{\dagger}(t) \dot\Omega(t)$, where $L^{\dagger}$ is the Moore-Penrose inverse of $L$. In this case, the standard theory of slowly varying systems \cite[Chapter 9.6]{HKK:02} can be applied for a slowly varying $\dot\Omega(t)$ satisfying $\| \ddot\Omega(t)\|_{\infty} \leq \epsilon$ for $\epsilon$ sufficiently small.

In summary, if each $\omega_{i}(t)$ is a smooth, bounded in $[\subscr{\omega}{max},\subscr{\omega}{min}]$, and the relative acceleration $\|\ddot\Omega(t)\|_{\infty} =\|\ddot \omega(t) - \subscr{\ddot \omega}{avg}(t) \fvec 1_{n \times 1}\|_{\infty} \leq \epsilon$ is sufficiently small, then there\,\,exists $T \geq 0$ and $k=k(\epsilon)>0$ such that the frequencies satisfy $\|\dot\delta(t) - L^{\dagger}(t) \dot\Omega(t)\|_{\infty} \leq k$\,\,for all $t \geq T$. Moreover, if $\ddot\Omega(t) \to 0$ as $t \to \infty$, then $\dot \delta(t) \to L^{\dagger}(t) \dot\Omega(t)$ as $t \to \infty$. In particular, $\epsilon$ and $k$ depend on the phase cohesiveness\,$\delta(t) \in \bar\Delta(\gamma)$, see \cite[Theorem\,9.3]{HKK:02} for details. Figure \ref{Fig: time-varying natural frequencies} illustrates these conclusions with a simulation of $n=10$ oscillators. The authors of \cite{AF-AC-WPL:10,DC-CPU:07} come to a similar conclusion when analyzing the effects of time-varying frequencies via input-to-state stability arguments or in simulations.

\begin{figure}[htbp]
	\centering{
	\includegraphics[scale=0.68]{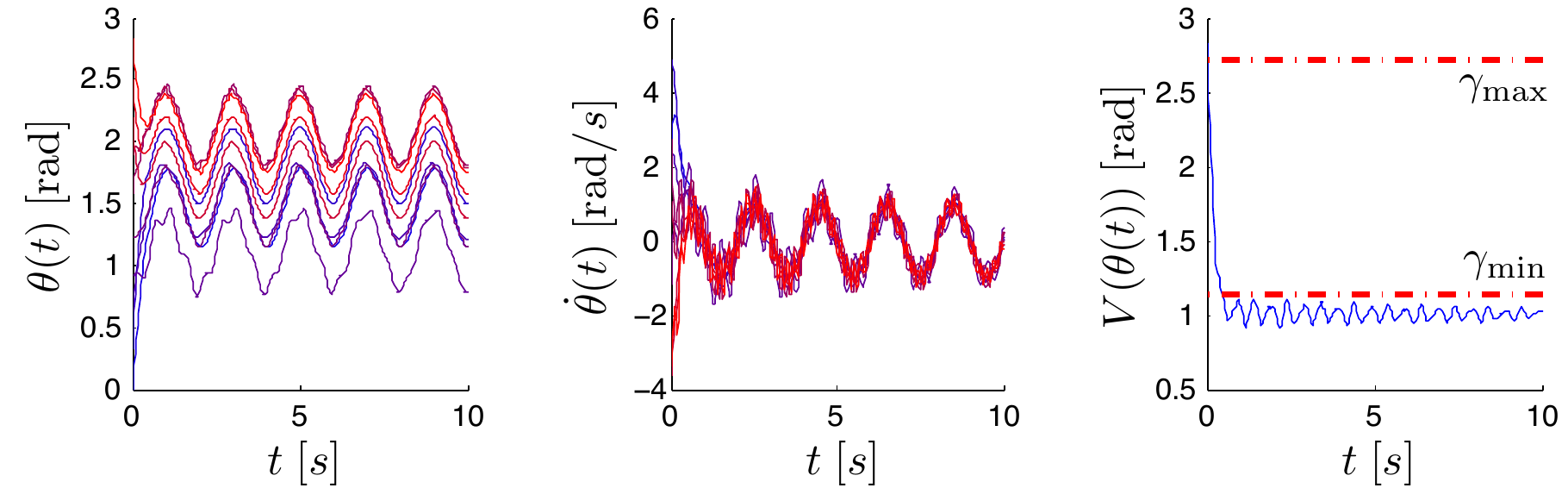}
	\caption{Simulation of a network of $n=10$ Kuramoto oscillators satisfying $K/\subscr{K}{critical} = 1.1$, where the natural frequencies $\omega_{i}:\, \mbb R_{\geq 0} \to [\subscr{\omega}{min},\subscr{\omega}{max}] = [0,1]$ are smooth, bounded, and distinct sinusoidal functions. Ultimately, each natural frequency $\omega_{i}(t)$ converges to $\omega_{i} + \sin(\pi t)$ with $\omega_{i} \in [0,1]$, and thus the relative acceleration $\ddot\Omega(t) = \ddot \omega(t) - \subscr{\ddot \omega}{avg}(t) \fvec 1_{n \times 1}$ converges to zero. The simulation illustrates the phase cohesiveness of the angles $\theta(t)$ in $\bar\Delta(\subscr{\gamma}{min})$, and the ultimate boundedness of the frequency variations (disagreement vector) $\dot \delta(t) = \dot{\theta}(t) - \subscr{\omega}{avg}(t) \fvec 1_{n \times 1}$ and their convergence to zero. The simulation further confirms the monotonicity of $V(\theta(t))$ in $\bar\Delta(\gamma)$ for $\gamma \in [\subscr{\gamma}{min},\subscr{\gamma}{max}]$. Ultimately, $V(\theta(t))$ converges to a constant value (strictly below $\subscr{\gamma}{min}$) as the frequencies converge.}
	\label{Fig: time-varying natural frequencies}
	}
\end{figure}

\section{Synchronization of Multi-Rate Kuramoto Models}
\label{Section: Synchronization of Multi-Rate Kuramoto Models}

In this section we extend the results in Theorem \ref{Theorem: Necessary and Sufficient conditions} to the multi-rate Kuramoto model \eqref{eq: multi-rate Kuramoto model}. For the special case of second-order oscillators ($m=n$) with unit damping $D_{i}=1$ and uniform inertia $M_{i} = M > 0$, the literature \cite{YPC-SYH-SBH:11,HAT-AJL-SO:97,HAT-AJL-SO:97b,HH-GSJ-MYC:02,HH-MYC-JY-KSS:99,JAA-LLB-RS:00,JAA-LLB-CJPV-FR-RS:05} on the inertial effects on synchronization is controversial. Here we will rigorously prove that the inertial terms {\em do not affect} the location and local stability properties of equilibria of the multi-rate Kuramoto model \eqref{eq: multi-rate Kuramoto model}. In particular, the necessary and sufficient synchronization conditions as well as the synchronization frequency are independent\,of\,\,the\,inertiae $M_{i}$; they rather depend on the terms $D_{i}$ mimicking viscous damping and time constants.

\subsection{A One-Parameter Family of Dynamical Systems and its Properties}
\label{Subsection: A Parameterized System and its Properties}

In this subsection we will link the multi-rate Kuramoto model \eqref{eq: multi-rate Kuramoto model} and the first-order Kuramoto model \eqref{eq: Kuramoto model} through a parametrized system. Consider for $n_{1}, n_{2} \geq 0$ and $\lambda \in [0,1]$ the one-parameter family $\mc H_{\lambda}$ of dynamical systems combining dissipative Hamiltonian and gradient-like dynamics together with external forcing\,\,as
\begin{align}
	\mc H_{\lambda}:\;\;\;
	\begin{split}
	D_{1} \dot x_{1} 
	=&\; 
	F_{1} - \nabla_{1} H(x)
	\,, \\
	\begin{bmatrix}
	I_{n_{2}} & \fvec 0 \\  \fvec 0 & M
	\end{bmatrix}
	\begin{bmatrix}
	\dot x_{2}\\\dot x_{3}
	\end{bmatrix}
	=&\;
	\begin{bmatrix}
	\lambda D_{2}^{-1} F_{2} \\ (1-\lambda) F_{2}
	\end{bmatrix}
	+
	\\ &\;
	\left( 
	(1-\lambda)
	\begin{bmatrix}
	\fvec 0 & I_{n_{2}} \\ -I_{n_{2}} & \fvec 0
	\end{bmatrix}
	-
	\begin{bmatrix}
	\lambda D_{2}^{-1} & \fvec 0 \\ \fvec 0 & D_{2}
	\end{bmatrix}
	\right)
	\begin{bmatrix}
	\nabla_{2} H(x) \\  \nabla_{3} H(x)
	\end{bmatrix}
	,
	\end{split}
	\label{eq: Hlambda family}
\end{align}
where $x = (x_{1},x_{2},x_{3}) \in \mc X_{1} \times \mc X_{2} \times \mathbb R^{n_{2}} = \mc X$ is the state, and the sets $\mc X_{1}$ and $\mc X_{2}$ are smooth manifolds of dimensions $n_{1}$ and $n_{2}$, respectively. The matrices $D_{1} \in \mbb R^{n_{1} \times n_{1}}$, $D_{2} \in \mbb R^{n_{2} \times n_{2}}$ and $M \in \mbb R^{n_{2} \times n_{2}}$ are positive definite, $\fvec 0$ are zero matrices of appropriate dimension%
\footnote{We did not index the zero matrices $\fvec 0$ according to their dimension to avoid notational clutter.},
$F_{1} \in \mbb R^{n_{1}}$ and $F_{2} \in \mbb R^{n_{2}}$ are constant forcing terms, and $H:\, \mc X \to \mbb R$ is a smooth potential function with partial derivative $\nabla_{i}H(x) = \partial H(x)/\partial x_{i}$, gradient vector $\nabla H(x) = (\partial H(x) / \partial x)^{T} \in\mbb R^{n \times 1}$, and Hessian matrix $\nabla^{2} H(x) \in \mbb R^{n \times n}$. 

The parameterized system \eqref{eq: Hlambda family} continuously interpolates, as a function of $\lambda \in {[0,1]}$, between gradient-like and mixed dissipative Hamiltonian/gradient-like dynamics. For $\lambda = 1$, the system \eqref{eq: Hlambda family} reduces to gradient-like dynamics with forcing term $\fvec F  \!=\! [F_{1}^{T},F_{2}^{T},\fvec 0]^{T}$ and {\em time constant} (or system metric) $\fvec D \!=\! \mathrm{blkdiag}(D_{1},D_{2},D_{2}^{-1}M)$\,as
\begin{align}
	\mc H_{1}:\quad
	\fvec D \dot x =  \fvec F - \nabla H(x)
	\label{eq: Hlambda family - gradient}
	\,.
\end{align}
For $\lambda = 0$, the dynamics \eqref{eq: Hlambda family} reduce to gradient-like dynamics for $x_{1}$ and dissipative Hamiltonian (or Newtonian) dynamics for $(x_{2},x_{3})$ written as
\begin{align}
	\mc H_{0}:\quad
	\begin{split}
	D_{1} \dot x_{1} 
	&= 
	F_{1} - \nabla_{1} H(x)
	\,, \\
	\begin{bmatrix}
	I_{n_{2}} & \fvec 0 \\  \fvec 0 & M
	\end{bmatrix}
	\!
	\begin{bmatrix}
	\dot x_{2}\\\dot x_{3}
	\end{bmatrix}
	&=
	\begin{bmatrix}
	\fvec 0 \\ F_{2}
	\end{bmatrix}
	+
	\left(
	\begin{bmatrix}
	\fvec 0\!\!\!\! & \!I_{n_{2}} \\ -I_{n_{2}}\!\!\!\! & \!\fvec 0
	\end{bmatrix}
	-
	\begin{bmatrix}
	\fvec 0 & \fvec 0 \\ \fvec 0 & D_{2}
	\end{bmatrix}
	\right)
	\begin{bmatrix}
	\nabla_{2} H(x) \\  \nabla_{3} H(x)
	\end{bmatrix}
	\,.
	\end{split}
	\label{eq: Hlambda family - Hamiltonian}
\end{align}
It turns out that, independently of $\lambda \in {[0,1]}$, all parameterized systems of the form \eqref{eq: Hlambda family} have the same equilibria with the same local stability properties determined by potential function $H(x)$. The following theorem summarizes these facts.

\begin{theorem}[\bf Properties of $\mc H_{\lambda}$ family]
\label{Theorem: Properties of Hlambda family}
Consider for $\lambda \in [0,1]$ the one-parameter family $\mc H_{\lambda}$ of dynamical systems \eqref{eq: Hlambda family}. 
The following statements hold independent of the parameter $\lambda \in [0,1]$ and independent of the particular positive definite matrices $D_{1},D_{2},M$:
\smallskip
\begin{enumerate}

	\item {\bf Equilibria:} 	For all $\lambda \in [0,1]$ the equilibria of $\mc H_{\lambda}$ are given by the set $\mc E \triangleq \{x \in \mc X:\, \nabla H(x) = \fvec F \}$; and

	\item {\bf Local stability:} For any equilibrium $x^{*} \in \mc E$ and for all $\lambda \in [0,1]$, the inertia of the Jacobian of $\mc H_{\lambda}$ is given by the inertia of $-\nabla^{2}H(x^{*})$ and the corresponding center-eigenspace is given by the nullspace of $\nabla^{2}H(x^{*})$.
	

\end{enumerate}
\end{theorem}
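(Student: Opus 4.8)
The plan is to first recast $\mc H_\lambda$ in the compact form $\mathbf M_\lambda\,\dot x = \mathbf b_\lambda - \mathbf A_\lambda\,\nabla H(x)$, where $\mathbf M_\lambda = \mathrm{blkdiag}(D_1,I_{n_2},M)\succ0$ is independent of $\lambda$, $\mathbf A_\lambda = \mathrm{blkdiag}\!\big(I_{n_1},\big[\begin{smallmatrix}\lambda D_2^{-1} & -(1-\lambda)I\\ (1-\lambda)I & D_2\end{smallmatrix}\big]\big)$, and $\mathbf b_\lambda = \mathbf A_\lambda\mathbf F$ with $\mathbf F=[F_1^{T},F_2^{T},\mathbf 0]^{T}$. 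For statement~1 I would show $\mathbf A_\lambda$ is invertible for every $\lambda\in[0,1]$: its $(1,1)$ block is $I_{n_1}$, and for the $2\times2$ block, pairing $\mathbf A_\lambda(u,z)^{T}=0$ with $(u,z)$ and taking real/symmetric parts yields $\lambda\,u^{T}D_2^{-1}u + z^{T}D_2 z = 0$, whence $u=z=0$ since $D_2\succ0$. Invertibility turns the equilibrium condition $\mathbf b_\lambda = \mathbf A_\lambda\nabla H(x)$ into $\nabla H(x)=\mathbf F$, giving $\mc E=\{\nabla H=\mathbf F\}$ uniformly in $\lambda$ and in $D_1,D_2,M$.

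For the Jacobian, differentiating at $x^{*}\in\mc E$ (where $\mathbf b_\lambda,\mathbf A_\lambda,\mathbf M_\lambda$ are constant) gives $\mc J_\lambda = -\mathbf M_\lambda^{-1}\mathbf A_\lambda\,\nabla^2 H(x^{*})$. Since $\mathbf M_\lambda^{-1}\mathbf A_\lambda$ is invertible, $\ker\mc J_\lambda=\ker\nabla^2 H(x^{*})$ for all $\lambda$, which identifies the center-eigenspace with $\ker\nabla^2 H(x^{*})$ once purely imaginary nonzero eigenvalues and nontrivial Jordan structure at the origin are excluded. It then remains to prove $\mathrm{In}(\mc J_\lambda)=\mathrm{In}(-\nabla^2 H(x^{*}))$ in the sense of the triple $\{\subscr{\nu}{s},\subscr{\nu}{c},\subscr{\nu}{u}\}$.

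My approach to the inertia claim is a base case plus a homotopy. As base case I choose $(\lambda,M)=(1,D_2)$, where $\mathbf M_1^{-1}\mathbf A_1=\mathrm{blkdiag}(D_1^{-1},D_2^{-1},I)=:\mathbf G\succ0$; then $\mc J = -\mathbf G\,\nabla^2 H(x^{*})$ is similar via $\mathbf G^{1/2}$ to the symmetric matrix $-\mathbf G^{1/2}\nabla^2 H(x^{*})\mathbf G^{1/2}$, which is congruent to $-\nabla^2 H(x^{*})$, so Sylvester's law gives the claim at this point. I would then connect an arbitrary parameter tuple $(\lambda,M)$ to this base point by a continuous path and argue, by continuity of the spectrum, that $\mathrm{In}(\mc J_\lambda)$ is invariant provided no eigenvalue crosses the imaginary axis; the zero eigenvalues stay pinned to $\ker\nabla^2 H(x^{*})$ throughout. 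The enabling lemma is that, on the complement of $\ker\nabla^2 H(x^{*})$, $\mc J_\lambda$ has no eigenvalues on the imaginary axis for any $\lambda\in[0,1]$ and any positive definite $D_1,D_2,M$; this is where the viscous damping enters, forcing the conservative (symplectic, $\lambda=0$) directions off the imaginary axis exactly as a viscously damped linear mechanical system has no undamped modes.

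The hard part will be this dissipativity lemma, to be proved uniformly in $\lambda$ and in the inertia $M$. Writing an imaginary eigenvalue as $\mathbf A_\lambda\nabla^2 H(x^{*})\,v=-\mathrm{i}\beta\,\mathbf M_\lambda v$ and pairing with $v^{*}$ forces $\mathrm{Re}(v^{*}\mathbf A_\lambda\nabla^2 H(x^{*})v)=0$, while pairing with $w=\nabla^2 H(x^{*})v$ exposes the manifestly nonnegative dissipation $w^{*}\mathbf A_\lambda^{\mathrm{sym}}w$ with $\mathbf A_\lambda^{\mathrm{sym}}=\mathrm{blkdiag}(I,\lambda D_2^{-1},D_2)$. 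The obstacle is that for a general, non-separable $H$ these two quadratic forms do not decouple, so I would have to combine the two identities together with the block structure of $\mathbf A_\lambda$ and $D_2\succ0$ to conclude $v\in\ker\nabla^2 H(x^{*})$, hence $\beta=0$. An alternative to the homotopy is to invoke the generalized (Ostrowski--Schneider) inertia theorem directly, seeking a symmetric certificate $P$ with $\mathrm{In}(P)=\mathrm{In}(\nabla^2 H(x^{*}))$ and $\mc J_\lambda^{T}P+P\mc J_\lambda\preceq0$; here the naive choice $P=\nabla^2 H(x^{*})$ succeeds only in the gradient-dominated regime, and capturing the damping along the Hamiltonian directions again requires a cross-term in $P$, which is the same crux in a different guise.
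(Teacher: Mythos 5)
Your reformulation, your invertibility argument for statement~1, and your factorization $\mc J_\lambda = -\mathbf M_\lambda^{-1}\mathbf A_\lambda \nabla^2 H(x^*)$ with $\ker \mc J_\lambda = \ker\nabla^2 H(x^*)$ all match the paper's proof (the paper gets invertibility from the Schur determinant formula, $\det(W_\lambda)=\det(D_1^{-1})(\lambda+(1-\lambda)^2)>0$; your quadratic-form route is fine, except that at $\lambda=0$ the symmetric part only yields $z=0$ and you must return to the skew part to get $u=0$). The genuine gap is in statement~2: your entire inertia argument is a homotopy whose validity rests on the ``dissipativity lemma'' that $\mc J_\lambda$ has no nonzero imaginary eigenvalues uniformly in $\lambda\in[0,1]$ and in $M\succ0$ --- and you explicitly leave that lemma unproved, conceding that the two pairing identities do not decouple for non-separable $H$. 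Since inertia invariance along the path is exactly equivalent to the absence of imaginary-axis crossings, deferring this lemma defers the whole theorem; as written, the proposal establishes statement~2 only at your base point $(\lambda,M)=(1,D_2)$.

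The paper closes precisely this crux with the main inertia theorem for positive \emph{semi-definite} matrices (Carlson--Schneider), and in a way that contradicts your diagnosis that a Lyapunov certificate would need a cross term. Writing $J_\lambda(x^*)=S_\lambda P$ with the symmetric certificate $P = -\,\mathrm{blkdiag}(I_{n_1+n_2},M)\,\nabla^2 H(x^*)$, the skew Hamiltonian blocks $\pm(1-\lambda)M^{-1}$ of $S_\lambda$ cancel under symmetrization, so with $A = J_\lambda(x^*)^T$ one gets $Q = \tfrac{1}{2}\bigl(AP+PA^T\bigr) = P\,\mathrm{blkdiag}\bigl(D_1^{-1},\,\lambda D_2^{-1},\,M^{-1}D_2M^{-1}\bigr)\,P \succeq 0$, with $\ker Q = \ker P$ for all $\lambda\in{]0,1]}$. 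The trick you missed is to sandwich with $P$ on \emph{both} sides (i.e., work with $AP+PA^T$ for $A=J_\lambda^T$, rather than $PA+A^TP$ with $P=\nabla^2 H$): no strict definiteness and no cross terms are required, and the semidefinite inertia theorem delivers \emph{simultaneously} the match of nonzero inertias with $P$ and the exclusion of nonzero imaginary eigenvalues --- exactly your missing lemma, for $\lambda>0$. The remaining case $\lambda=0$ is handled by a one-sided continuity limit from $\lambda>0$ with the zero eigenvalues pinned at $\ker\nabla^2 H(x^*)$, and arbitrary $M$ is absorbed at the end by Sylvester's inertia theorem, since $P = \mathrm{blkdiag}(I_{n_1+n_2},M)\bigl(-\nabla^2 H(x^*)\bigr)$ has the inertia of $-\nabla^2 H(x^*)$; this eliminates your homotopy in $M$ and any need for uniform-in-$M$ estimates. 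If you wish to salvage your route, prove the lemma for $\lambda>0$ via the Carlson--Schneider argument above and take the limit $\lambda\downarrow0$ as the paper does; attacking the dissipativity lemma head-on at $\lambda=0$ for a fully coupled Hessian is genuinely delicate, and the paper deliberately avoids it.
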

\smallskip

Statements 1) and 2) assert that normal hyperbolicity of the critical points of $H(x)$ can be directly related to local\,\,exponential (set) stability for any $\lambda\!\in\![0,1]$. This implies that  all vector fields $\mc H_{\lambda}$, $\lambda \in [0,1]$, are {\em locally topologically conjugate} \cite{CR:99}
near a hyperbolic equilibrium point $x^{*} \in \mc E$.
In particular, near $x^{*} \in \mc E$, trajectories of the gradient vector field \eqref{eq: Hlambda family - gradient} can be continuously deformed\,\,to match trajectories of the Hamiltonian vector field \eqref{eq: Hlambda family - Hamiltonian} while preserving parameterization of time. This topological conjugacy holds also for hyperbolic equilibrium trajectories \cite[Theorem 6]{EACT-SEAM-PRCR:07} considered in synchronization. 
The similarity between second-order Hamiltonian systems and the corresponding first-order gradient flows is well-known in mechanical control systems \cite{DEK:88b,DEK:89}, in dynamic optimization \cite{FA:00,HA-PEM:10,XG-JM:09}, and in transient stability studies for power networks \cite{HDC-FFW:02,HDC-CCC:95,CCC:96}, but we are not aware of any result as general as Theorem \ref{Theorem: Properties of Hlambda family}. In \cite{HDC-FFW:02,HDC-CCC:95,CCC:96}, statements 1) and 2) are proved under the more stringent assumptions that $\mc H_{\lambda}$ has a finite number of isolated and hyperbolic equilibria.

\begin{remark}[\bf Extensions on Euclidean state spaces]
\normalfont
If the dynamical system $\mc H_{\lambda}$ is analyzed on the Euclidean space $\mbb R^{n_{1} + 2n_{2}}$, then it can be verified that the modified potential function $\tilde H:\, R^{n_{1} + 2n_{2}} \to \mbb R$, $\tilde H(x) = - F_{1}^{T} x_{1} - F_{2}^{T}x_{2} + H(x_{1},x_{2},M^{1/2}x_{3})$ is non-increasing along any forward-complete solution $\map{x}{\real_{\geq0}}{R^{n_{1} + 2n_{2}} }$ and for all $\lambda \in [0,1]$. Furthermore, if the sublevel set $\Omega_{c} = \{ x \in \mc X:\, \tilde H(x) \leq c \}$ is compact, then every solution initiating in $\Omega_c$ is bounded and forward-complete, and by the invariance principle \cite[Theorem 4.4]{HKK:02} it converges to the set $\mc E \cap \Omega_c$, independently of $\lambda \in [0,1]$. These statements can be refined under further structural assumptions on the potential function $\tilde H(x)$, and various other minimizing properties can be deduced, see \cite{FA:00,HA-PEM:10,XG-JM:09}. Additionally, if $\tilde H(x)$ constitutes an {\it energy function}, if all equilibria are hyperbolic, and if a one-parameter transversality condition is satisfied, then the separatrices of system \eqref{eq: Hlambda family} can be characterized accurately \cite{HDC-FFW:02,HDC-CCC:95,CCC:96}. For zero forcing $\fvec F = \fvec 0$, these convergence statements also hold on the possibly non-Euclidean space $\mc X$, and for non-zero forcing they hold locally on $\mc X$.
\oprocend
\end{remark}

\begin{proof}
To prove statement 1), we reformulate the parameterized dynamics \eqref{eq: Hlambda family}\,\,as
\begin{equation*}
	\begin{bmatrix}
		\dot x_{1} \\ \dot x_{2} \\ M \dot x_{3}
	\end{bmatrix}
	=
	\underbrace{
	\begin{bmatrix}
		D_{1}^{-1} & \fvec 0 & \fvec 0 \\
		\fvec 0 & \lambda D_{2}^{-1} & -(1-\lambda) I_{n_{2}} \\
		\fvec 0 & (1-\lambda) I_{n_{2}} & D_{2}
	\end{bmatrix}
	}_{\triangleq W_{\lambda}}
	\underbrace{
	\begin{bmatrix}
		F_{1} - \nabla_{1} H(x) \\ F_{2} - \nabla_{2} H(x) \\  - \nabla_{3} H(x)
	\end{bmatrix}
	}_{= \fvec F - \nabla H(x)}
	\,.
\end{equation*}
It follows from the {\it Schur determinant formula} \cite{FZ:05} that $\det(W_{\lambda}) = \det(D_{1}^{-1})(\lambda + (1-\lambda)^{2})$ is positive for all $\lambda \in {[0,1]}$. Hence, $W_{\lambda}$ is nonsingular for all $\lambda \in {[0,1]}$, and the equilibria of \eqref{eq: Hlambda family} are given by by the set $\mc E = \{x \in \mc X:\, \nabla H(x) = \fvec F \}$.
To prove statement 2) we analyze the Jacobian of $\mc H_{\lambda}$ at an equilibrium $x^{*} \in \mc E$ given by
\begin{equation}
	J_{\lambda}(x^{*})
	=
	\underbrace{
	\left[\begin{array}{cc|c}
		D_{1}^{-1}\!\!\! & 0\! & 0\! \\
		0\!\!\! & \lambda D_{2}^{-1}\! & (\lambda-1) M^{-1}\! \\ \hline
		0\!\!\! & (1-\lambda) M^{-1}\! & M^{-1}D_{2} M^{-1}\!
	\end{array}\right]
	}_{\triangleq S_{\lambda}}
	\;
	\underbrace{
	\left[\begin{array}{c|c}
	- I_{n_{1}+n_{2}} \! & \fvec 0 \\ \hline
	\fvec 0 \! & - M 
	\end{array}\right] \nabla^{2} H(x^{*})
	}_{\triangleq S(x^{*})}
	\label{eq: Jacobian of Hlambda}
	. 
\end{equation}
Again, we obtain $\det(S_{\lambda}) = \det(D_{1}^{-1})\det(D_{2}^{-1})\det(M^{-1}D_{2} M^{-1})(\lambda + (1-\lambda)^{2})$. Thus, $S_{\lambda}$ is nonsingular for $\lambda \in {[0,1]}$, and the nullspace of $J_{\lambda}(x^{*})$ is given by $\mathrm{ker}\nabla^{2}H(x^{*})$ (independently of $\lambda \in [0,1]$).
To show that the stability properties of the equilibrium $x^{*} \in \mc E$ are independent of $\lambda \in [0,1]$, we prove that the inertia of the Jacobian $J_{\lambda}(x^{*})$ depends only on $S(x^{*})$ and not on $\lambda \in [0,1]$. For the invariance of the inertia we appeal to the {\it main inertia theorem for positive semi-definite matrices} \cite[Theorem 5]{DC-HS:62}. Note that $J_{\lambda}(x^{*})$ and $J_{\lambda}(x^{*})^{T}$ have the same eigenvalues. 
Let $A \triangleq J_{\lambda}(x^{*})^{T}$ and $P \triangleq S(x^{*})$, and consider the matrix $Q$ defined via the Lyapunov equality as
\begin{equation*}
	Q 
	\triangleq
	\frac{1}{2} \left( AP + PA^{T} \right) 
	=
	P 
	\begin{bmatrix}
		D_{1}^{-1} & 0 & 0\\
		0 & \lambda D_{2}^{-1} & 0 \\
		0 & 0 & M^{-1}D_{2} M^{-1}
	\end{bmatrix} 
	P
	\,.
\end{equation*} 
Note that $Q$ is positive semidefinite for $\lambda \geq 0$, and for $\lambda \neq 0$ the nullspaces of $Q$ and $P$ coincide, i.e., $\mathrm{ker}Q = \mathrm{ker}P$. Hence, for $\lambda \in {]0,1]}$ the assumptions of \cite[Theorem 5]{DC-HS:62} are satisfied, and it follows that the non-zero inertia of $A=J_{\lambda}(x^{*})^{T}$ (restricted to image of $A$) corresponds to the non-zero inertia of $P$. Hence, the non-zero inertia of $J_{\lambda}(x^{*})$ is {\it independent} of $\lambda \in {]0,1]}$, and possible zero eigenvalues correspond to $\mathrm{ker}J_{\lambda}(x^{*}) = \mathrm{ker}\nabla^{2}H(x^{*})$. 
To handle the case $\lambda = 0$ we invoke continuity arguments. Since the eigenvalues of $J_{\lambda}(x^{*})$ are continuous functions of the matrix elements, the inertia of $J_{0}(x^{*})$ is the same as the inertia of $J_{\lambda}(x^{*})$ for $\lambda>0$ sufficiently small. Since the inertia of $J_{\lambda}(x^{*})$, $\lambda \in {]0,1]}$, equals the inertia of $P$ (which is independent of $\lambda$), it follows that the inertia of $J_{\lambda}(x^{*})$ equal the inertia of $P$ for all $\lambda \in [0,1]$. 

Finally, since $\mathrm{blkdiag}(I_{n_{1}+n_{2}},M)$ is positive definite, {\it Sylvester's inertia theorem} \cite{DC-HS:62} asserts that the inertia of $P = \mathrm{blkdiag}(I_{n_{1}+n_{2}},M) (-\nabla^{2} H(x^{*}))$ equals the inertia of  $-\nabla^{2} H(x^{*})$. In conclusion, the inertia and the nullspace of  $J_{\lambda}(x^{*})$ equal the inertia of $-\nabla^{2} H(x^{*})$ and $\mathrm{ker}\nabla^{2}H(x^{*})$. This completes the proof of Theorem \ref{Theorem: Properties of Hlambda family}.
\end{proof}

\subsection{Equivalence of Local Synchronization Conditons}
\label{Subsection: Equivalence of Local Synchronization Conditions}

As a consequence of Theorem \ref{Theorem: Properties of Hlambda family}, we can link synchronization in the multi-rate Kuramoto model \eqref{eq: multi-rate Kuramoto model} and in the regular Kuramoto model \eqref{eq: Kuramoto model}. Since Theorem \ref{Theorem: Properties of Hlambda family} is valid only for equilibria, we convert synchronization to stability of an equilibrium manifold by changing coordinates to a rotating frame. The explicit synchronization frequency $\subscr{\omega}{sync} \in \mbb R$\,\,of the multi-rate Kuramoto model \eqref{eq: multi-rate Kuramoto model} is obtained by summing over all equations \eqref{eq: multi-rate Kuramoto model}\,as
\begin{equation}
	\sum\nolimits_{i=1}^{m} M_{i} \ddot \theta_{i} + \sum\nolimits_{i=1}^{n} D_{i} \dot \theta_{i} = \sum\nolimits_{i=1}^{n} \omega_{i}
	\label{eq: sum over all dynamics}
	\,.
\end{equation}
In the frequency-synchronized case when all $\ddot \theta_{i} = 0$ and $\dot \theta_{i} = \subscr{\omega}{sync}$, equation \eqref{eq: sum over all dynamics} simplifies to $\sum_{i=1}^{n} D_{i} \subscr{\omega}{sync} = \sum_{i=1}^{n} \omega_{i}$. We conclude that the synchronization frequency of the multi-rate Kuramoto model is given by $\subscr{\omega}{sync} \triangleq \sum_{i=1}^{n} \omega_{i}/\sum_{i=1}^{n} D_{i}$. Accordingly, define the {\it first-order multi-rate Kuramoto model} by dropping the inertia term\,as
\begin{equation}
	D_{i} \dot \theta_{i}
	=
	\omega_{i} - \frac{K}{n} \sum\nolimits_{j=1}^{n} \sin(\theta_{i} - \theta_{j})
	\,,\quad i \in \until n\,,
	\label{eq: 1st order multi-rate Kuramoto model}
\end{equation}
and the globally exponentially stable {\it frequency dynamics} as
\begin{equation}
	\frac{d}{dt}\, \dot\theta_{i}
	=
	-M_{i}^{-1} D_{i}\, \bigl( \dot \theta_{i} - \subscr{\omega}{sync} \bigr)
	\,, \quad i \in \{1 , \dots, m\}\,,
	\label{eq: frequency dynamics}
\end{equation}
where $M_{i}$, $D_{i}$, $\omega_{i}$, and $K$ take the same values as the corresponding parameters for the multi-rate Kuramoto model \eqref{eq: multi-rate Kuramoto model}. It can be verified that the multi-rate Kuramoto model \eqref{eq: multi-rate Kuramoto model} and its first-order variant \eqref{eq: 1st order multi-rate Kuramoto model} have the same synchronization frequency. 

Finally, let $\bar \omega_{i} \triangleq \omega_{i} - D_{i} \subscr{\omega}{sync}$ and define the {\it scaled Kuramoto model} by
\begin{equation}
	\dot \theta_{i}
	=
	\bar \omega_{i} - \frac{K}{n} \sum\nolimits_{j=1}^{n} \sin(\theta_{i} - \theta_{j})
	\,,\quad i \in \until n\,,
	\label{eq: scaled Kuramoto model}
\end{equation}
and its associated {\it scaled frequency dynamics} by
\begin{equation}
	\frac{d}{dt}\, \dot\theta_{i}
	=
	-M_{i}^{-1} D_{i}\, \dot \theta_{i}
	\,, \quad i \in \{1 , \dots, m\}\,.
	\label{eq: scaled frequency dynamics}
\end{equation}
The scaled model \eqref{eq: scaled Kuramoto model}-\eqref{eq: scaled frequency dynamics} corresponds to the dynamics \eqref{eq: 1st order multi-rate Kuramoto model}-\eqref{eq: frequency dynamics} formulated\,in\,a rotating frame with frequency $\subscr{\omega}{sync}$ and after unitizing the time constants $D_{i}$ in \eqref{eq: 1st order multi-rate Kuramoto model}.

Notice that the multi-rate Kuramoto model \eqref{eq: multi-rate Kuramoto model}, its first-order variant \eqref{eq: 1st order multi-rate Kuramoto model} together with frequency dynamics \eqref{eq: frequency dynamics} (formulated in a rotating frame with frequency $\subscr{\omega}{sync}$), and the scaled Kuramoto model \eqref{eq: scaled Kuramoto model} together with scaled frequency dynamics \eqref{eq: scaled frequency dynamics} are instances of the parameterized system \eqref{eq: Hlambda family} with the forcing terms $\omega_{i}$ and the potential $H:\, \mbb T^{n} \times \mbb R^{m} \to \mbb R$, $H(\theta,\dot \theta) = \frac{1}{2} \dot\theta^{T} \dot\theta - \frac{K}{n} \sum_{i,j=1}^{n} \cos(\theta_{i} - \theta_{j})$ defined up to a constant value. In the sequel, we seek to apply Theorem\,\ref{Theorem: Properties of Hlambda family} to these three models.

 For a rigorous reasoning, we define a two-parameter family of functions $\phi_{r,s}:\,  \mbb R_{\geq 0} \to \mbb T$ of the form $\phi_{r,s}(t) \triangleq r + s \cdot t \pmod{2\pi}$, where $r \in \mbb T$ and $s \in \mbb R$. Consider for $(r_{1},\dots,r_{n}) \in \bar\Delta(\gamma)$, $\gamma \in {[0,\pi[}$ the composite function 
\begin{equation}
	\Phi_{\gamma,s}:\,  \mbb R_{\geq 0} \to \mbb T^{n}
	\,,\quad
	\Phi_{\gamma,s}(t) \triangleq \bigl( \phi_{r_{1},s}(t), \dots, \phi_{r_{n},s}(t) \bigr)
	\label{eq: composite synchronized trajectory}
\end{equation}
mimicking synchronized trajectories of the three Kuramoto models \eqref{eq: multi-rate Kuramoto model}, \eqref{eq: 1st order multi-rate Kuramoto model}, and \eqref{eq: scaled Kuramoto model}. We now have all ingredients to state the following result on synchronization.

\begin{theorem}
{\bf (Synchronization Equivalence)}
\label{Theorem: Local Equivalence of First and Second Order Synchronization}
Consider the multi-rate Kuramoto model \eqref{eq: multi-rate Kuramoto model}, its first-order variant \eqref{eq: 1st order multi-rate Kuramoto model}, and the scaled Kuramoto model \eqref{eq: scaled Kuramoto model} with $\bar \omega_{i} = \omega_{i} - D_{i} \subscr{\omega}{sync}$, where $\subscr{\omega}{sync} = \sum_{k=1}^{n} \omega_{k} / \sum_{k=1}^{n} D_{k}$. The following statements are equivalent for any $\gamma \in {[0,\pi[}$, $t \geq 0$, and any function $\Phi_{\gamma,\subscr{\omega}{sync}}(t)$ defined in \eqref{eq: composite synchronized trajectory}:
\begin{enumerate}

	\item[(i)] $(\Phi_{\gamma,\subscr{\omega}{sync}}(t), \subscr{\omega}{sync} \fvec 1_{m \times 1})$ parametrizes a locally exponentially stable synchronized trajectory $(\theta(t),\dot \theta(t))$ of the multi-rate Kuramoto model \eqref{eq: multi-rate Kuramoto model};
	
	\item[(ii)] $\Phi_{\gamma,\subscr{\omega}{sync}}(t)$ parametrizes a locally exponentially stable synchronized trajectory $\theta(t)$ of the first-order multi-rate Kuramoto model \eqref{eq: 1st order multi-rate Kuramoto model}; and
	
	\item[(iii)] $\Phi_{\gamma,0}(t)$ parametrizes a locally exponentially stable synchronized equilibrium trajectory $\theta(t)$ of the scaled Kuramoto model \eqref{eq: scaled Kuramoto model}.
		
\end{enumerate}
If the equivalent statements (i), (ii), and (iii) are true, then, locally near their\,respective synchronization manifolds, the multi-rate Kuramoto model \eqref{eq: multi-rate Kuramoto model}, its first-order variant \eqref{eq: 1st order multi-rate Kuramoto model} together with the frequency dynamics \eqref{eq: frequency dynamics}, and the scaled Kuramoto model \eqref{eq: scaled Kuramoto model} together with the scaled frequency dynamics \eqref{eq: scaled frequency dynamics} are topologically conjugate.
\end{theorem}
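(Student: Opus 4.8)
The plan is to reduce the entire statement to Theorem~\ref{Theorem: Properties of Hlambda family} in two moves: (a) pass to a rotating coordinate frame so that synchronized trajectories become equilibria, and (b) recognize the three models as the particular instances $\mc H_{0}$ and $\mc H_{1}$ of the one-parameter family \eqref{eq: Hlambda family} built from the common potential $H(\theta,\dot\theta) = \frac{1}{2}\dot\theta^{T}\dot\theta - \frac{K}{n}\sum_{i,j}\cos(\theta_{i}-\theta_{j})$ and the forcing $\fvec F$ assembled from the $\bar\omega_{i}$. Since Theorem~\ref{Theorem: Properties of Hlambda family} asserts that equilibria and their local inertia are independent of both $\lambda$ and the positive-definite matrices $D_{1},D_{2},M$, all of the work reduces to bookkeeping plus a careful treatment of the rotational symmetry.

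First I would introduce the rotating frame $\psi_{i} = \theta_{i} - \subscr{\omega}{sync}\,t \pmod{2\pi}$. A direct substitution shows that in these coordinates the multi-rate model \eqref{eq: multi-rate Kuramoto model} acquires the shifted forcing $\bar\omega_{i} = \omega_{i} - D_{i}\subscr{\omega}{sync}$, that the synchronized trajectory $(\Phi_{\gamma,\subscr{\omega}{sync}}(t),\subscr{\omega}{sync}\fvec 1_{m\times 1})$ becomes a fixed equilibrium with phase part $\Phi_{\gamma,0}$ and zero frequency part, and that the first-order variant \eqref{eq: 1st order multi-rate Kuramoto model} with \eqref{eq: frequency dynamics} and the scaled model \eqref{eq: scaled Kuramoto model} with \eqref{eq: scaled frequency dynamics} are likewise sent to equilibria (the latter being already in this frame). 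With the identification $x_{1} = (\theta_{m+1},\dots,\theta_{n})$, $x_{2} = (\theta_{1},\dots,\theta_{m})$, and $x_{3} = (\dot\theta_{1},\dots,\dot\theta_{m})$, the multi-rate model is exactly $\mc H_{0}$ of \eqref{eq: Hlambda family - Hamiltonian}, while the first-order variant with \eqref{eq: frequency dynamics} is $\mc H_{1}$ of \eqref{eq: Hlambda family - gradient} with $\fvec D = \mathrm{blkdiag}(D_{1},D_{2},D_{2}^{-1}M)$, and the scaled model is a further instance of $\mc H_{1}$ with unitized time constants. I would note here that in $\mc H_{1}$ one has $\nabla_{3}H(x)=x_{3}$, so the frequency block $\dot x_{3}=-M^{-1}D_{2}x_{3}$ decouples and is globally exponentially stable; this is precisely what allows statements (ii) and (iii) to be phrased on $\mbb T^{n}$ alone.

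Next I would invoke Theorem~\ref{Theorem: Properties of Hlambda family}. Its statement~1 shows that all three models share the equilibrium set $\mc E = \{\nabla H = \fvec F\}$, so $\Phi_{\gamma,0}$ is a phase-locked equilibrium of one model if and only if it is of the others. Its statement~2 shows that at any such $x^{*}$ the inertia of the Jacobian equals that of $-\nabla^{2}H(x^{*})$, \emph{independent of $\lambda$ and of $D_{1},D_{2},M$}; the latter independence is exactly what lets us pass freely between the true time constants and the unitized ones of the scaled model. Because $H$ depends only on phase differences, $\nabla^{2}H(x^{*})$ always carries the uniform-shift direction $\fvec 1$ in its kernel, and by statement~2 the center-eigenspace coincides with $\ker\nabla^{2}H(x^{*})$ for every $\lambda$. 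Hence ``local exponential stability of the synchronized trajectory'' means precisely that $-\nabla^{2}H(x^{*})$ is stable transverse to this symmetry direction, a property shared by all three models simultaneously; this yields (i) $\Leftrightarrow$ (ii) $\Leftrightarrow$ (iii).

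For the concluding topological conjugacy I would appeal to the conjugacy assertion accompanying Theorem~\ref{Theorem: Properties of Hlambda family}: near a normally hyperbolic equilibrium all vector fields $\mc H_{\lambda}$ are locally topologically conjugate, and by \cite[Theorem 6]{EACT-SEAM-PRCR:07} this extends to the hyperbolic equilibrium \emph{trajectories} relevant here. Transforming back from the rotating frame is a smooth, time-preserving change of coordinates and therefore carries the conjugacy to the original synchronization manifolds. The main obstacle is the non-hyperbolicity forced by the rotational symmetry: the zero eigenvalue along $\fvec 1$ obstructs a direct Hartman--Grobman argument, so the crux is to verify that the center-eigenspace is \emph{exactly} the one-dimensional symmetry orbit (equivalently, that $\nabla^{2}H(x^{*})$ is nonsingular on its complement, as in the grounded coordinates of \cite[Lemma IV.1]{FD-FB:09o-arxiv}) and then to invoke the normally-hyperbolic version of the conjugacy theorem rather than the fixed-point version.
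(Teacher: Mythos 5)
Your proposal is correct and follows essentially the same route as the paper's own proof: pass to the rotating frame so synchronized trajectories become equilibria, recognize the three models as the instances $\mc H_{0}$ and $\mc H_{1}$ of the family \eqref{eq: Hlambda family} with the common potential $H$, apply Theorem \ref{Theorem: Properties of Hlambda family} to equate equilibria and Jacobian inertia, and obtain the conjugacy from the generalized Hartman--Grobman theorem \cite[Theorem 6]{EACT-SEAM-PRCR:07} combined with conjugacy of the linearized flows. The ``crux'' you flag is resolved in the paper exactly as you suggest: factoring out the translational invariance (quotient/grounded coordinates) shows that local exponential stability of the orbit forces hyperbolicity transverse to the one-dimensional symmetry direction, so the center eigenspace is precisely $\ker\nabla^{2}H(x^{*})$.
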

\smallskip

For purely second-order Kuramoto oscillators \eqref{eq: multi-rate Kuramoto model} (with $n=m$), Theorem \ref{Theorem: Properties of Hlambda family} and Theorem \ref{Theorem: Local Equivalence of First and Second Order Synchronization}  essentially state that the locations and stability properties of the {\em foci} of second-order Kuramoto oscillators (with damped oscillatory dynamics) are equivalent to those of the {\em nodes} of the scaled Kuramoto model \eqref{eq: scaled Kuramoto model} and the scaled frequency dynamics \eqref{eq: scaled frequency dynamics} (with overdamped dynamics), as illustrated in Figure\,\,\ref{Fig: 1st and 2nd order simulation}.
\begin{figure}[htbp]
	\centering{
	\includegraphics[scale=0.687]{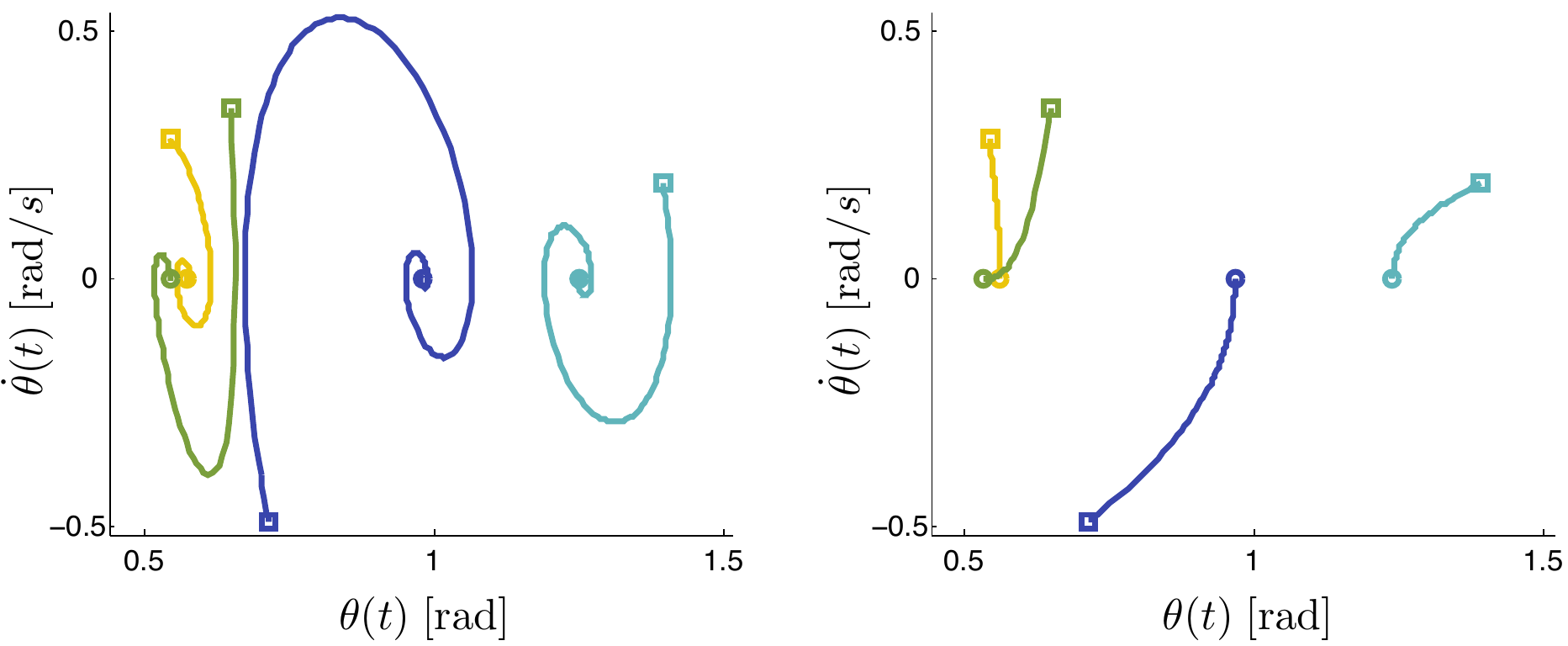}
	\caption{Phase space plot of a network of $n=4$ second-order Kuramoto oscillators \eqref{eq: multi-rate Kuramoto model} with $n=m$ (left plot) and the corresponding first-order scaled Kuramoto oscillators \eqref{eq: scaled Kuramoto model} together with the scaled frequency dynamics \eqref{eq: scaled frequency dynamics} (right plot). The natural frequencies $\omega_{i}$, damping terms $D_{i}$, and coupling strength $K$ are such that $\subscr{\omega}{sync} = 0$ and $K/\subscr{K}{critical} = 1.1$. From the same initial configuration $\theta(0)$ (denoted by $\blacksquare$) both first and second-order oscillators converge exponentially to the same nearby phase-locked equilibria  (denoted by {\large$\bf\bullet$}) as predicted by Theorems \ref{Theorem: Properties of Hlambda family} and \ref{Theorem: Local Equivalence of First and Second Order Synchronization}.}
	\label{Fig: 1st and 2nd order simulation}
	}
\end{figure}

\begin{proof}
By Definition, a synchronized trajectory of the multi-rate Kuramoto model \eqref{eq: multi-rate Kuramoto model} is of the form $(\theta(t),\dot \theta(t)) \in (\Phi_{\gamma,\subscr{\omega}{sync}}(t),\subscr{\omega}{sync} \fvec 1_{m \times 1})$ for $\gamma \in {[0,\pi[}$ and $t \geq 0$. In a rotating frame with frequency $\subscr{\omega}{sync}$, the multi-rate Kuramoto model \eqref{eq: multi-rate Kuramoto model} reads as 
\begin{align}
	\begin{split}
	M \ddot \theta_{i} + D_{i} \dot \theta_{i}
	&=
	\bar \omega_{i} - \frac{K}{n} \sum\nolimits_{j=1}^{n} \sin(\theta_{i} - \theta_{j})
	\,, \quad i \in \until m \,,
	\\
	D_{i} \dot \theta_{i}
	&=
	\bar \omega_{i} - \frac{K}{n} \sum\nolimits_{j=1}^{n} \sin(\theta_{i} - \theta_{j})
	\,, \quad i \in \{m+1,\dots,n\} \,.
	\end{split}
	\label{eq: multi-rate Kuramoto model - rotating frame}
\end{align}
Hence, an exponentially synchronized trajectory of \eqref{eq: multi-rate Kuramoto model - rotating frame} is an equilibrium solution determined up to a translational invariance in $\mbb S^{1}$ and satisfies $(\theta(t),\dot \theta(t)) \in (\Phi_{\gamma,0}(t),\fvec 0_{m \times 1})$. Hence, the phase-synchronized {\it orbit} $(\Phi_{\gamma,0}(t),\fvec 0_{m \times 1})$, understood as a geometric object in $\mbb T^{n} \times \mbb R^{m}$, constitutes a one-dimensional equilibrium manifold of the multi-rate Kuramoto model \eqref{eq: multi-rate Kuramoto model - rotating frame}. 
After factoring out the translational invariance of the angular variable $\theta$, the exponentially-synchronized orbit $(\Phi_{\gamma,0}(t),\fvec 0_{m \times 1})$ corresponds to an isolated equilibrium of \eqref{eq: multi-rate Kuramoto model - rotating frame} in the quotient space $\mbb T^{n} \setminus \mbb S^{1} \times \mbb R^{m}$. Since an isolated equilibrium of a smooth nonlinear system with bounded and Lipschitz Jacobian is exponentially stable if and only if the Jacobian is a Hurwitz matrix \cite[Theorem 4.15]{HKK:02}, the locally exponentially stable orbit $(\Phi_{\gamma,0}(t),\fvec 0_{m \times 1})$ must be hyperbolic in the quotient space $\mbb T^{n} \setminus \mbb S^{1} \times \mbb R^{m}$. Therefore, the equilibrium trajectory $(\Phi_{\gamma,0}(t),\fvec 0_{m \times 1})$ is exponentially stable in $\mbb T^{n} \times \mbb R^{m}$ if and only if the Jacobian of \eqref{eq: multi-rate Kuramoto model - rotating frame} evaluated along $(\Phi_{\gamma,0}(t),\fvec 0_{m \times 1})$, has $n+m-1$ stable eigenvalues and one zero eigenvalue corresponding to the translational invariance in $\mbb S^{1}$. 

By an analogous reasoning we reach the same conclusion for the first-order multi-rate Kuramoto model \eqref{eq: 1st order multi-rate Kuramoto model} (formulated in a rotating frame with frequency $\subscr{\omega}{sync}$) and for the scaled Kuramoto model \eqref{eq: scaled Kuramoto model}: the exponentially-synchronized trajectory $\Phi_{\gamma,0}(t) \in \mbb T^{n}$ is exponentially stable if and only if the Jacobian of \eqref{eq: scaled Kuramoto model} evaluated along $\Phi_{\gamma,0}(t)$ has $n-1$ stable eigenvalues and one zero eigenvalue.
Finally, recall that  the multi-rate Kuramoto model \eqref{eq: multi-rate Kuramoto model - rotating frame}, its first-order variant \eqref{eq: 1st order multi-rate Kuramoto model} together with frequency dynamics \eqref{eq: frequency dynamics} (in a rotating frame), and the scaled Kuramoto model \eqref{eq: scaled Kuramoto model} together with scaled frequency dynamics \eqref{eq: scaled frequency dynamics} are all instances of the parameterized system \eqref{eq: Hlambda family}. Therefore, by Theorem \ref{Theorem: Properties of Hlambda family}, the corresponding Jacobians have the same inertia and local exponential stability of one system implies local exponential stability of the other system. This concludes the proof of the equivalences (i) $\Leftrightarrow$ (ii) $\Leftrightarrow$ (iii).

We now prove the final conjugacy statement. By the {\em generalized Hartman-Grobman theorem} \cite[Theorem 6]{EACT-SEAM-PRCR:07}, the trajectories of the three vector fields \eqref{eq: multi-rate Kuramoto model - rotating frame}, \eqref{eq: 1st order multi-rate Kuramoto model}-\eqref{eq: frequency dynamics} (formulated in a rotating frame), and \eqref{eq: scaled Kuramoto model}-\eqref{eq: scaled frequency dynamics} are locally topologically conjugate to the flow generated by their respective linearized vector fields (locally near $(\Phi_{\gamma,0}(t), \fvec 0_{m \times 1})$. Since the three vector fields \eqref{eq: multi-rate Kuramoto model - rotating frame}, \eqref{eq: 1st order multi-rate Kuramoto model}-\eqref{eq: frequency dynamics}, and \eqref{eq: scaled Kuramoto model}-\eqref{eq: scaled frequency dynamics} are hyperbolic with respect to $(\Phi_{\gamma,0}(t), \fvec 0_{m \times 1})$ and their respective Jacobians have the same hyperbolic inertia (besides the common one-dimensional center eigenspace corresponding to $(\Phi_{\gamma,0}(t), \fvec 0_{m \times 1})$, the corresponding three linearized dynamics are topologically conjugate \cite[Theorem 7.1]{CR:99}. In summary, the trajectories generated by the three vector fields \eqref{eq: multi-rate Kuramoto model - rotating frame}, \eqref{eq: 1st order multi-rate Kuramoto model}-\eqref{eq: frequency dynamics} (formulated in a rotating frame), and \eqref{eq: scaled Kuramoto model}-\eqref{eq: scaled frequency dynamics} are locally topologically conjugate near the equilibrium manifold $(\Phi_{\gamma,0}(t), \fvec 0_{m \times 1})$.
\end{proof}

\begin{remark}[\bf Alternative ways from first to second-order Kuramoto models]
\normalfont
Alternative methods to relate stability properties from the first-order Kuramoto model \eqref{eq: Kuramoto model} to the multi-rate model \eqref{eq: multi-rate Kuramoto model} include second-order Gronwall's inequalities \cite{YPC-SYH-SBH:11}, strict Lyapunov functions for mechanical systems \cite{DEK:88b,DEK:89}, and singular perturbation analysis \cite{FD-FB:09z}. 
It should be noted that the approaches \cite{YPC-SYH-SBH:11,DEK:88b,DEK:89} are limited to purely second-order systems, the second-order Gronwall inequality approach \cite{YPC-SYH-SBH:11} has been carried out only for uniform inertia $M_{i} = M$ and unit damping $D_{i} = 1$, and the Lyapunov approach \cite{DEK:88b,DEK:89} is limited to potential-based Lyapunov functions and seems not extendable to our contraction-based Lyapunov function used in the proof of Theorem \ref{Theorem: Necessary and Sufficient conditions}. Finally, the singular perturbation approach \cite{FD-FB:09z} requires a sufficiently small inertia over damping ratio $\epsilon \triangleq \max_{i \in \until m}\{M_{i}/D_{i}\}$. 

As compared with these alternative methods, Theorem \ref{Theorem: Local Equivalence of First and Second Order Synchronization} applies to the multi-rate Kuramoto model \eqref{eq: multi-rate Kuramoto model} with mixed first and second-order dynamics, for all values of $M_{i}>0$ and $D_{i}>0$, and without additional assumptions. Finally, it is instructive to note that the first-order multi-rate Kuramoto\,dynamics \eqref{eq: 1st order multi-rate Kuramoto model} and the  frequency dynamics \eqref{eq: frequency dynamics} (in the time-scale $t/\epsilon$) correspond to the reduced slow system and\,the\,\,fast boundary layer model in the singular perturbation approach \cite[Theorem IV.2]{FD-FB:09z}. 
\oprocend
\end{remark}

\subsection{Synchronization in the Multi-Rate Kuramoto Model}
\label{Subsection: Synchronization in the multi-rate Kuramoto model}

Theorems \ref{Theorem: Properties of Hlambda family} and \ref{Theorem: Local Equivalence of First and Second Order Synchronization} together with Theorem \ref{Theorem: Necessary and Sufficient conditions} on the first-order Kuramoto model \eqref{eq: Kuramoto model} allow us to state our final conditions on synchronization in the multi-rate Kuramoto model\,\eqref{eq: multi-rate Kuramoto model}.

\begin{theorem}{\bf(Exponential Synchronization in the Multi-Rate Kuramoto Model)}
\label{Theorem: Exponential Synchronization in the Multi-Rate Kuramoto Model}
Consider the multi-rate Kuramoto model \eqref{eq: multi-rate Kuramoto model} and let $\bar \omega_{i} = \omega_{i} - D_{i} \subscr{\omega}{sync}$, where $\subscr{\omega}{sync} = \sum_{k=1}^{n} \omega_{k} / \sum_{k=1}^{n} D_{k}$. Then the following statements hold:

\smallskip
{\bf 1) Exponential synchronization:} The following two statements are equivalent:
\begin{enumerate}

	\item[(i)] the coupling strength $K$ is larger than the maximum non-uniformity among the scaled natural frequencies, i.e., $K > \subscr{K}{critical} \triangleq  \subscr{\bar\omega}{max} - \subscr{\bar\omega}{min}$; and

	\item[(ii)] there exists an arc length $\subscr{\gamma}{min} \in {[0,\pi/2[}$, such that the multi-rate Kuramoto model \eqref{eq: multi-rate Kuramoto model} has a locally exponentially stable synchronized solution with synchronization frequency $\subscr{\omega}{sync}$ and phase cohesive in $\bar\Delta(\subscr{\gamma}{min})$
for all $n \geq 2$, for all $m \in {[0,n]}$, for all inertiae $M_{j} > 0$, $j \in \until m$, and for all possible $D_{i} > 0$ and $\omega_{i} \in \mbb R$ satisfying $\bar \omega_{i} = \omega_{i} - D_{i} \subscr{\omega}{sync} \in {[\subscr{\bar\omega}{max} , \subscr{\bar\omega}{min}]}$, $i \in \until n$.

\end{enumerate}
Moreover, in either of the two equivalent cases (i) and (ii), the ratio $\subscr{K}{critical}/K$  and the arc length $\subscr{\gamma}{min} \in {[0,\pi/2[}$ are related uniquely via ${\subscr{K}{critical}}/K = \sin(\subscr{\gamma}{min})$.

\smallskip
{\bf 2) Phase synchronization:} The following two statements are equivalent:
\begin{enumerate}
	
	\item[(iii)] there exists a constant $\bar s \in \mbb R$ such that $\omega_{i} = D_{i} \bar s$ for all $i \in \until n$; and
		
	\item[(iv)] there exists an almost globally exponentially stable phase-synchronized solution with constant synchronization frequency $\subscr{\bar \omega}{sync} \in \mathbb R$.

\end{enumerate}
Moreover, in either of the two equivalent cases (iii) and (iv), the constant $\bar s$ and the synchronization frequency $\subscr{\bar \omega}{sync}$ are related uniquely via $\bar s \equiv \subscr{\bar \omega}{sync}$, and the the asymptotic synchronization phase is given by $\sum_{i=1}^{n} D_{i} \theta_{i}(0)/ \sum_{i=1}^{n} D_{i}  + \subscr{\bar \omega}{sync} t \pmod{2\pi}$.
\end{theorem}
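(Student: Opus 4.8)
The plan is to read the entire theorem as a corollary of Theorem~\ref{Theorem: Necessary and Sufficient conditions} combined with the reduction machinery of Theorems~\ref{Theorem: Properties of Hlambda family} and~\ref{Theorem: Local Equivalence of First and Second Order Synchronization}. The pivotal observation is that the scaled Kuramoto model \eqref{eq: scaled Kuramoto model} is \emph{literally} a first-order Kuramoto model of the form \eqref{eq: Kuramoto model} whose natural frequencies are the scaled frequencies $\bar\omega_i = \omega_i - D_i\subscr{\omega}{sync}$. Hence Theorem~\ref{Theorem: Necessary and Sufficient conditions} applies verbatim to \eqref{eq: scaled Kuramoto model} with critical coupling $\subscr{\bar\omega}{max} - \subscr{\bar\omega}{min}$, and the synchronization equivalence of Theorem~\ref{Theorem: Local Equivalence of First and Second Order Synchronization} transports every local stability conclusion back to the multi-rate model \eqref{eq: multi-rate Kuramoto model}. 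The universal quantification over all $M_j>0$, $D_i>0$, and $\omega_i$ with $\bar\omega_i \in [\subscr{\bar\omega}{min},\subscr{\bar\omega}{max}]$ is inherited directly, since Theorem~\ref{Theorem: Necessary and Sufficient conditions} already holds for \emph{all} frequency distributions supported on the given interval.

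For statement 1) I would first recall from \eqref{eq: sum over all dynamics} that the only admissible synchronization frequency is $\subscr{\omega}{sync} = \sum_i\omega_i/\sum_i D_i$. Then I invoke the equivalence (i)$\Leftrightarrow$(iii) of Theorem~\ref{Theorem: Necessary and Sufficient conditions} applied to \eqref{eq: scaled Kuramoto model}: the bound $K > \subscr{\bar\omega}{max} - \subscr{\bar\omega}{min} = \subscr{K}{critical}$ is equivalent to the existence of a locally exponentially stable synchronized trajectory of \eqref{eq: scaled Kuramoto model} in $\bar\Delta(\subscr{\gamma}{min})$ with $\sin(\subscr{\gamma}{min}) = \subscr{K}{critical}/K$; because $\sum_i\bar\omega_i = 0$, this trajectory is in fact an equilibrium $\Phi_{\subscr{\gamma}{min},0}(t)$. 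By the equivalence (iii)$\Leftrightarrow$(i) of Theorem~\ref{Theorem: Local Equivalence of First and Second Order Synchronization} this is in turn equivalent to local exponential stability of the corresponding trajectory $(\Phi_{\subscr{\gamma}{min},\subscr{\omega}{sync}}(t),\subscr{\omega}{sync}\fvec 1_{m\times1})$ of \eqref{eq: multi-rate Kuramoto model}, which is phase cohesive in $\bar\Delta(\subscr{\gamma}{min})$ and runs at frequency $\subscr{\omega}{sync}$. This yields (i)$\Leftrightarrow$(ii) together with the stated relation; crucially the inertiae $M_j$ never enter, precisely because Theorem~\ref{Theorem: Properties of Hlambda family} certifies that the Jacobian inertia is independent of them. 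Necessity needs no separate argument, as the whole chain is a chain of equivalences whose worst case is the bipolar distribution of scaled frequencies already handled in Theorem~\ref{Theorem: Necessary and Sufficient conditions}.

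For statement 2) the key algebraic step is that condition (iii), $\omega_i = D_i\bar s$, forces $\subscr{\omega}{sync} = \bar s$ and hence $\bar\omega_i \equiv 0$, so the scaled model collapses to the identical-frequency Kuramoto model $\dot\theta_i = -(K/n)\sum_j\sin(\theta_i-\theta_j)$ whose phase-synchronized state is stable. Conversely, a phase-synchronized equilibrium of \eqref{eq: scaled Kuramoto model} can exist only if $\bar\omega_i = 0$ for all $i$, since at $\theta_i\equiv\theta_j$ the coupling vanishes and the equilibrium condition reduces to $\bar\omega_i = 0$; this returns $\omega_i = D_i\subscr{\omega}{sync}$, i.e.\ (iii) with $\bar s = \subscr{\omega}{sync} = \subscr{\bar\omega}{sync}$. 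The asymptotic phase follows from a conserved barycenter: weighting and summing the first-order model \eqref{eq: 1st order multi-rate Kuramoto model} gives $\frac{d}{dt}\sum_i D_i\theta_i = \sum_i\omega_i$, so in the frame rotating at $\subscr{\omega}{sync}$ the quantity $\sum_i D_i\theta_i$ is exactly conserved, pinning the synchronized phase at $\sum_i D_i\theta_i(0)/\sum_i D_i + \subscr{\bar\omega}{sync}\,t \pmod{2\pi}$.

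The main obstacle I anticipate is the \emph{almost-global} nature of the phase-synchronization claim. The conjugacy of Theorem~\ref{Theorem: Local Equivalence of First and Second Order Synchronization} is only \emph{local}, so it delivers local exponential stability of the phase-synchronized orbit but says nothing about the size of its basin for the genuinely second-order system. To upgrade to almost-global stability I would use the energy-function argument of the Remark following Theorem~\ref{Theorem: Properties of Hlambda family}: for zero forcing $\fvec F = \fvec 0$ (exactly the case $\bar\omega_i\equiv0$) the modified potential $\tilde H$ is a global Lyapunov function on $\mc X$ with compact sublevel sets, so by the invariance principle every trajectory converges to the equilibrium set $\mc E$; Theorem~\ref{Theorem: Properties of Hlambda family} then guarantees that every equilibrium other than the phase-synchronized minimum inherits the unstable inertia of its gradient counterpart and hence attracts only a measure-zero set, leaving phase synchronization almost globally attractive. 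A secondary subtlety to handle carefully is the translational $\mbb S^{1}$-invariance that produces the single zero eigenvalue and must be factored out before ``exponential stability'' and ``hyperbolicity'' are invoked, and the fact that for truly second-order nodes the conserved barycenter additionally carries the inertial momentum $\sum_{i\leq m} M_i\dot\theta_i(0)$, so the clean phase formula above is exact for the first-order reduction and fixes the limiting orbit only up to this inertial transient.
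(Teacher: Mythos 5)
Your proposal is correct and follows essentially the same route as the paper's proof: part 1) by applying Theorem \ref{Theorem: Necessary and Sufficient conditions} to the scaled Kuramoto model \eqref{eq: scaled Kuramoto model} (whose frequencies $\bar\omega_i$ sum to zero, so synchronized trajectories are equilibria) and transporting the conclusion back through Theorem \ref{Theorem: Local Equivalence of First and Second Order Synchronization}, and part 2) by the algebraic necessity $\omega_i = D_i\bar s$ together with the dissipative-Hamiltonian energy function, the invariance principle, and instability of all non-phase-synchronized equilibria inherited via the inertia argument of Theorem \ref{Theorem: Properties of Hlambda family}. Your closing observation that the conserved quantity for the genuinely second-order nodes carries the inertial momentum $\sum_{i\leq m} M_i \dot\theta_i(0)$ is a legitimate refinement rather than a defect: the paper's own derivation sidesteps this by integrating the summed dynamics only along phase-synchronized solutions, i.e., with $\dot\theta_i \equiv 0$ in the rotating frame.
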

\smallskip

The following remarks concerning Theorem \ref{Theorem: Exponential Synchronization in the Multi-Rate Kuramoto Model} are in order. 
First, notice that Theorem \ref{Theorem: Exponential Synchronization in the Multi-Rate Kuramoto Model} is in perfect agreement with the results derived in \cite{HH-MYC-JY-KSS:99} for the case of two second-order Kuramoto oscillators.
Second, Theorem \ref{Theorem: Exponential Synchronization in the Multi-Rate Kuramoto Model} shows that phase synchronization is independent of the inertial coefficients $M_{i}$, thereby improving the sufficient conditions presented in \cite[Theorems 4.1 and 4.2]{YPC-SYH-SBH:11} and confirming the results in \cite{JAA-LLB-CJPV-FR-RS:05,JAA-LLB-RS:00} derived for the infinite-dimensional case. Furthermore, phase synchronization occurs almost globally which improves the region of attraction presented in \cite{YPC-SYH-SBH:11} and naturally generalizes the result known for the first-order model \cite[Theorem 1]{RS-DP-NEL:07}.
Third, as in Section \ref{Section: New, tight, and explicit bound}, the bound on $\subscr{K}{critical}$ presented in (i) is only tight and may be conservative for a particular set of natural frequencies. Since the multi-rate Kuramoto model \eqref{eq: multi-rate Kuramoto model} is an instance of the parameterized system considered in Theorem \ref{Theorem: Properties of Hlambda family}, it has the same equilibria and the same stability properties as the scaled Kuramoto model \eqref{eq: scaled Kuramoto model} (together with the frequency dynamics \eqref{eq: scaled frequency dynamics}). Hence, the implicit formulae \eqref{eq: Verwoerd bound 1}-\eqref{eq: Verwoerd bound 2} can be applied to the scaled Kuramoto model \eqref{eq: scaled Kuramoto model} to find the exact critical coupling for a given set of natural frequencies. 
Fourth, we remark that every local bifurcation in the multi-rate Kuramoto model \eqref{eq: multi-rate Kuramoto model} is independent of the inertiae $M_{i}$ since it can be analyzed locally by means of the scaled Kuramoto model \eqref{eq: scaled Kuramoto model}. Moreover, the asymptotic magnitude of the order parameter determined by the location of phase-locked equilibria is also independent of the inertiae.
Fifth and finally, Theorems \ref{Theorem: Properties of Hlambda family} and \ref{Theorem: Local Equivalence of First and Second Order Synchronization} apply to any variant of the multi-rate Kuramoto model \eqref{eq: multi-rate Kuramoto model} that can be written in the forced Hamiltonian and gradient form \eqref{eq: Hlambda family} with normally hyperbolic equilibria. For example, the results on almost global phase synchronization for a connected and undirected coupling topology \cite[Proposition 3.3.2]{AS:09} and for state-dependent coupling weights \cite{LS:10} can be directly applied to the multi-rate Kuramoto model \eqref{eq: multi-rate Kuramoto model}.

Based on the results in this section, we conclude that the inertial terms do not affect the location and local stability properties of synchronized trajectories in the multi-rate Kuramoto model \eqref{eq: multi-rate Kuramoto model}. However, the inertiae may still affect the transient synchronization behavior, for example, the convergence rates, the shape of separatrices and basins of attractions, and the qualitative (possibly oscillatory) transient dynamics.

\begin{proof}
We begin by proving the equivalence (i) $\Leftrightarrow$ (ii). By Theorem \ref{Theorem: Local Equivalence of First and Second Order Synchronization}, a locally exponentially stable synchronized trajectory of the multi-rate Kuramoto model \eqref{eq: multi-rate Kuramoto model} exists if and only if there exists a locally exponentially stable equilibrium of the corresponding scaled Kuramoto model \eqref{eq: scaled Kuramoto model}. The latter is true if and only if statement (i) holds, see Theorem \ref{Theorem: Necessary and Sufficient conditions}. Moreover, Theorem \ref{Theorem: Necessary and Sufficient conditions} asserts that a synchronized solution is phase cohesive in $\bar\Delta(\subscr{\gamma}{min})$. This proves the equivalence (i) $\Leftrightarrow$ (ii).

We next prove the implication (iv) $\!\implies\!$ (iii). By assumption, there exist constants $\subscr{\theta}{sync} \in \mbb T$ and $\subscr{\bar \omega}{sync} \in \mbb R$ such that $\theta_{i}(t) = \subscr{\theta}{sync} + \subscr{\bar \omega}{sync}t \pmod{2\pi}$, $\dot \theta_{i}(t) = \subscr{\bar\omega}{sync}$, and $\ddot \theta_{i}(t) = 0$, $i \in \until n$. In the phase-synchronized case, the dynamics \eqref{eq: multi-rate Kuramoto model} then read as $D_{i} \subscr{\bar \omega}{sync} = \omega_{i}$ for all $i \in \until n$. Hence, a necessary condition for the existence of phase-synchronized solutions is that all ratios $\omega_{i}/D_{i} = \subscr{\bar \omega}{sync}$ are constant. 

In order to prove the converse implication (iii) $\implies$ (iv), let $\bar s= \subscr{\bar \omega}{sync}$ and consider the model \eqref{eq: multi-rate Kuramoto model} written in a rotating frame with frequency $\subscr{\bar\omega}{sync}$ as
\begin{align}
	\begin{split}
	M \ddot \theta_{i} + D_{i} \dot \theta_{i}
	&= 
	- \frac{K}{n} \sum_{i=1}^{n} \sin(\theta_{i}-\theta_{j})
	\,, \quad i \in \until m \,,
	\\
	D_{i} \dot \theta_{i}
	&=
	- \frac{K}{n} \sum_{i=1}^{n} \sin(\theta_{i}-\theta_{j})
	\,, \quad i \in \{m+1,\dots,n\} \,.
	\end{split}
	\label{eq: multi-rate Kuramoto model - rotating frame - phase sync}
\end{align}
Note that \eqref{eq: multi-rate Kuramoto model - rotating frame - phase sync} is an unforced and dissipative Hamiltonian system, and the corresponding energy function $V(\theta,\dot \theta) = \frac{1}{2} \dot\theta^{T} M\dot\theta - \frac{K}{n} \sum_{i,j=1}^{n} \cos(\theta_{i} - \theta_{j})$ is non-increasing along trajectories. Since the sublevel sets of $V(\theta,\dot \theta)$ are compact, the invariance principle \cite[Theorem 4.4]{HKK:02} implies that every solution converges to set of equilibria. By Theorem \ref{Theorem: Local Equivalence of First and Second Order Synchronization}, we conclude that the phase-synchronized equilibrium of \eqref{eq: multi-rate Kuramoto model - rotating frame - phase sync} is locally exponentially stable if and only if the the phase-synchronized equilibrium of the corresponding scaled Kuramoto model \eqref{eq: scaled Kuramoto model} with $\bar \omega_{i} = 0$ is exponentially stable. By \cite[Theorem 1]{RS-DP-NEL:07}, the latter statement is true, all other equilibria are locally unstable, and thus the region of attraction is almost global. This concludes the proof of (iii)\,$\Leftrightarrow$\,(iv).

To obtain the explicit synchronization phase, we sum over all equations \eqref{eq: multi-rate Kuramoto model - rotating frame - phase sync} to obtain
$
\sum\nolimits_{i=1}^{m} M_{i} \ddot \theta_{i} + \sum\nolimits_{i=1}^{n} D_{i} \dot \theta_{i} = 0
$.
Integration of this equation along phase-synchronized solutions yields that $\sum_{i=1}^{n} D_{i} \theta_{i}(t) = \sum_{i=1}^{n} D_{i} \theta_{i}(0)$ is constant for all $t \geq 0$, where we already accounted for $\dot \theta_{i}(t) = 0$ for all $i \in \until n$ and all $t \geq 0$.
Hence, the synchronization phase is given by a weighted average of the initial conditions $\sum_{i=1}^{n} D_{i} \theta_{i}(0)/ \sum_{i=1}^{n} D_{i}$. In the original coordinates (non-rotating frame) the synchronization phase is then given by $\sum_{i=1}^{n} D_{i} \theta_{i}(0)/ \sum_{i=1}^{n} D_{i} + \subscr{\dot \theta}{sync} t$.
\end{proof}

\section{Conclusions}
\label{Section: Conclusions}

This paper reviewed various bounds on the critical coupling strength in the Kuramoto model, formally introduced the powerful concept of phase cohesiveness, and presented an explicit and tight bound sufficient for synchronization in the Kuramoto model. This bound is necessary and sufficient for arbitrary distributions of the natural frequencies and tight for the particular case, where only implicit bounds are known. Furthermore, a general practical stability result as well as various performance measures have been derived as a function of the multiplicative gap in the bound. Finally, we partially extended these results to the multi-rate Kuramoto model and proved that the inertial terms do not affect synchronization conditions.

In view of the different biological and technological applications of the Kuramoto model \cite{RS-DP-NEL:07,KW-PC-SHS:98,FD-FB:09o-arxiv,PAT:03,DC-CPU:07,ARB-DJH:81,GBE:91,HDC-CCC:95}, similar tight and explicit bounds have to be derived for synchronization (as well as splay state stabilization) with arbitrary coupling topologies, phase and time delays, non-gradient-like dynamics, and possibly non-uniform coupling weights depending on state and time.



\bibliographystyle{siam}
\bibliography{alias,Main,FB,New}

\begin{thebibliography}{10}

\bibitem{JAA-LLB-RS:00}
{\sc J.~A. Acebr{\'o}n, L.~L. Bonilla, and R.~Spigler}, {\em Synchronization in
  populations of globally coupled oscillators with inertial effects}, Physical
  Review E, 62 (2000), p.~3437.

\bibitem{JAA-LLB-CJPV-FR-RS:05}
{\sc J.~A. Acebr{\'o}n, L.~L. Bonilla, C.~J.~P. Vicente, F.~Ritort, and
  R.~Spigler}, {\em The {K}uramoto model: {A} simple paradigm for
  synchronization phenomena}, Reviews of Modern Physics, 77 (2005),
  pp.~137--185.

\bibitem{DA-JAR:04}
{\sc D.~Aeyels and J.~A. Rogge}, {\em Existence of partial entrainment and
  stability of phase locking behavior of coupled oscillators}, Progress on
  Theoretical Physics, 112 (2004), pp.~921--942.

\bibitem{FA:00}
{\sc F.~Alvarez}, {\em {On the minimizing property of a second order
  dissipative system in Hilbert spaces}}, SIAM Journal on Control and
  Optimization, 38 (2000), pp.~1102--1119.

\bibitem{HA-PEM:10}
{\sc H.~Attouch and P.E. Mainge}, {\em {Asymptotic behavior of second-order
  dissipative evolution equations combining potential with non-potential
  effects}}, ESAIM: Control, Optimisation and Calculus of Variations,  (2010).
\newblock To appear.

\bibitem{JB-CIB:82}
{\sc J.~Baillieul and C.I. Byrnes}, {\em Geometric critical point analysis of
  lossless power system models}, IEEE Transactions on Circuits and Systems, 29
  (1982), pp.~724--737.

\bibitem{ARB-DJH:81}
{\sc A.~R. Bergen and D.~J. Hill}, {\em A structure preserving model for power
  system stability analysis}, IEEE Transactions on Power Apparatus and Systems,
  100 (1981), pp.~25--35.

\bibitem{LB-LS-ADG:09}
{\sc L.~Buzna, S.~Lozano, and A.~Diaz-Guilera}, {\em Synchronization in
  symmetric bipolar population networks}, Physical Review E, 80 (2009),
  p.~66120.

\bibitem{EC-PM:08}
{\sc E.~Canale and P.~Monz{\'o}n}, {\em Almost global synchronization of
  symmetric {K}uramoto coupled oscillators}, in Systems Structure and Control,
  InTech Education and Publishing, 2008, ch.~8, pp.~167--190.

\bibitem{DC-HS:62}
{\sc D.~Carlson and H.~Schneider}, {\em {Inertia Theorems for Matrices: The
  Semidefinite Case}}, Journal of Mathematical Analysis and Applications, 6
  (1963), pp.~430--446.

\bibitem{HDC-CCC:95}
{\sc H.-D. Chiang and C.~C. Chu}, {\em Theoretical foundation of the {BCU}
  method for direct stability analysis of network-reduction power system models
  with small transfer conductances}, IEEE Transactions on Circuits and
  Systems~I: Fundamental Theory and Applications, 42 (1995), pp.~252--265.

\bibitem{HDC-FFW:02}
{\sc H.~D. Chiang and F.~F. Wu}, {\em Stability of nonlinear systems described
  by a second-order vector differential equation}, IEEE Transactions on
  Circuits and Systems, 35 (2002), pp.~703--711.

\bibitem{YPC-SYH-SBH:11}
{\sc Y.-P. Choi, S.-Y. Ha, and S.-B. Yun}, {\em Complete synchronization of
  {K}uramoto oscillators with finite inertia}, Physica D, 240 (2011),
  pp.~32--44.

\bibitem{NC-MWS:08}
{\sc N.~Chopra and M.~W. Spong}, {\em On exponential synchronization of
  {K}uramoto oscillators}, IEEE Transactions on Automatic Control, 54 (2009),
  pp.~353--357.

\bibitem{CCC:96}
{\sc C.C. Chu}, {\em {Transient dynamics of electric power systems: Direct
  stability assessment and chaotic motions}}, PhD thesis, Cornell University,
  1996.

\bibitem{SJC-JJS:10}
{\sc S.J. Chung and J.J. Slotine}, {\em On synchronization of coupled
  {Hopf-Kuramoto} oscillators with phase delays}, Apr. 2010.
\newblock Available at \texttt{http://arxiv.org/abs/1004.5366}.

\bibitem{EACT-SEAM-PRCR:07}
{\sc E.~A. Coayla-Teran, S.~E.~A. Mohammed, and P.~R.~C. Ruffino}, {\em
  {Hartman-Grobman theorems along hyperbolic stationary trajectories}},
  Dynamical Systems, 17 (2007), pp.~281--292.

\bibitem{DC-CPU:07}
{\sc D.~Cumin and C.~P. Unsworth}, {\em Generalising the {K}uramoto model for
  the study of neuronal synchronisation in the brain}, Physica D: Nonlinear
  Phenomena, 226 (2007), pp.~181--196.

\bibitem{FDS-DA:07}
{\sc F.~{De~Smet} and D.~Aeyels}, {\em Partial entrainment in the finite
  {Kuramoto--Sakaguchi} model}, Physica D: Nonlinear Phenomena, 234 (2007),
  pp.~81--89.

\bibitem{FD-FB:09o-arxiv}
{\sc F.~D{\"o}rfler and F.~Bullo}, {\em Synchronization and transient stability
  in power networks and non-uniform {K}uramoto oscillators}, Oct. 2009.
\newblock Available at \texttt{http://arxiv.org/abs/0910.5673}.

\bibitem{FD-FB:09z}
\leavevmode\vrule height 2pt depth -1.6pt width 23pt, {\em Synchronization and
  transient stability in power networks and non-uniform {K}uramoto
  oscillators}, IEEE Transactions on Automatic Control,  (2010).
\newblock Submitted.

\bibitem{GBE:85}
{\sc G.~B. Ermentrout}, {\em Synchronization in a pool of mutually coupled
  oscillators with random frequencies}, Journal of Mathematical Biology, 22
  (1985), pp.~1--9.

\bibitem{GBE:91}
{\sc G.~B. Ermentrout}, {\em {An adaptive model for synchrony in the firefly
  pteroptyx malaccae}}, Journal of Mathematical Biology, 29 (1991),
  pp.~571--585.

\bibitem{AF-AC-WPL:10}
{\sc A.~Franci, A.~Chaillet, and W.~Pasillas-L{\'e}pine}, {\em Phase-locking
  between {K}uramoto oscillators: {R}obustness to time-varying natural
  frequencies}, in {IEEE} Conf.\ on Decision and Control, 2010.
\newblock submitted.

\bibitem{XG-JM:09}
{\sc X.~Goudou and J.~Munier}, {\em The gradient and heavy ball with friction
  dynamical systems: the quasiconvex case}, Mathematical Programming, 116
  (2009), pp.~173--191.

\bibitem{SYH-TH-KJH:10}
{\sc S.-Y. Ha, T.~Ha, and J.-H. Kim}, {\em On the complete synchronization of
  the {K}uramoto phase model}, Physica D: Nonlinear Phenomena, 239 (2010),
  pp.~1692--1700.

\bibitem{HH-MYC-JY-KSS:99}
{\sc H.~Hong, M.~Y. Choi, J.~Yi, and K.~S. Soh}, {\em Inertia effects on
  periodic synchronization in a system of coupled oscillators}, Physical Review
  E, 59 (1999), p.~353.

\bibitem{HH-GSJ-MYC:02}
{\sc H.~Hong, G.S. Jeon, and M.Y. Choi}, {\em Spontaneous phase oscillation
  induced by inertia and time delay}, Physical Review E, 65 (2002), p.~026208.

\bibitem{AJ-NM-MB:04}
{\sc A.~Jadbabaie, N.~Motee, and M.~Barahona}, {\em On the stability of the
  {K}uramoto model of coupled nonlinear oscillators}, in {A}merican {C}ontrol
  {C}onference, Boston, MA, June 2004, pp.~4296--4301.

\bibitem{HKK:02}
{\sc H.~K. Khalil}, {\em Nonlinear Systems}, Prentice Hall, 3~ed., 2002.

\bibitem{DEK:88b}
{\sc D.~E. Koditschek}, {\em Strict global {L}yapunov functions for mechanical
  systems}, {A}merican {C}ontrol {C}onference,  (1988), pp.~1770--1775.

\bibitem{DEK:89}
\leavevmode\vrule height 2pt depth -1.6pt width 23pt, {\em The application of
  total energy as a {L}yapunov function for mechanical control systems}, in
  Dynamics and Control of Multibody Systems, J.~E. Marsden, P.~S.
  Krishnaprasad, and J.~C. Simo, eds., vol.~97, AMS, 1989, pp.~131--157.

\bibitem{YK:75}
{\sc Y.~Kuramoto}, {\em Self-entrainment of a population of coupled non-linear
  oscillators}, in Int. Symposium on Mathematical Problems in Theoretical
  Physics, H.~Araki, ed., vol.~39 of Lecture Notes in Physics, Springer, 1975,
  pp.~420--422.

\bibitem{YK:84}
\leavevmode\vrule height 2pt depth -1.6pt width 23pt, {\em Chemical
  Oscillations, Waves, and Turbulence}, Springer, 1984.

\bibitem{ZL-BF-MM:07}
{\sc Z.~Lin, B.~Francis, and M.~Maggiore}, {\em State agreement for
  continuous-time coupled nonlinear systems}, SIAM Journal on Control and
  Optimization, 46 (2007), pp.~288--307.

\bibitem{YLM-OVP-PAT:05}
{\sc Y.~L. Maistrenko, O.~V. Popovych, and P.~A. Tass}, {\em Desynchronization
  and chaos in the {K}uramoto model}, in Dynamics of Coupled Map Lattices and
  of Related Spatially Extended Systems, J.-R. Chazottes and B.~Fernandez,
  eds., vol.~671 of Lecture Notes in Physics, Springer, 2005, pp.~285--306.

\bibitem{EAM-EB-SHS-PS-TMA:09}
{\sc E.~A. Martens, E.~Barreto, S.~H. Strogatz, E.~Ott, P.~So, and T.~M.
  Antonsen}, {\em Exact results for the {K}uramoto model with a bimodal
  frequency distribution}, Physical Review E, 79 (2009), p.~26204.

\bibitem{REM-SHS:05}
{\sc R.~E. Mirollo and S.~H. Strogatz}, {\em The spectrum of the locked state
  for the {K}uramoto model of coupled oscillators}, Physica D: Nonlinear
  Phenomena, 205 (2005), pp.~249--266.

\bibitem{CR:99}
{\sc C.~Robinson}, {\em {Dynamical systems: stability, symbolic dynamics, and
  chaos}}, CRC Press, 1999.

\bibitem{AS:09}
{\sc A.~Sarlette}, {\em Geometry and Symmetries in Coordination Control}, PhD
  thesis, University of Li\`ege, Belgium, Jan. 2009.

\bibitem{LS:10}
{\sc L.~Scardovi}, {\em Clustering and synchronization in phase models with
  state dependent coupling}, in {IEEE} Conf.\ on Decision and Control, Dec.
  2010, pp.~627--632.

\bibitem{GSS-UM-FA:09}
{\sc G.~S. Schmidt, U.~M{\"u}nz, and F.~Allg{\"o}wer}, {\em Multi-agent speed
  consensus via delayed position feedback with application to {K}uramoto
  oscillators}, in {E}uropean {C}ontrol {C}onference, Budapest, Hungary, Aug.
  2009, pp.~2464--2469.

\bibitem{RS-DP-NEL:07}
{\sc R.~Sepulchre, D.~A. Paley, and N.~E. Leonard}, {\em Stabilization of
  planar collective motion: {A}ll-to-all communication}, IEEE Transactions on
  Automatic Control, 52 (2007), pp.~811--824.

\bibitem{SHS:00}
{\sc S.~H. Strogatz}, {\em From {K}uramoto to {C}rawford: Exploring the onset
  of synchronization in populations of coupled oscillators}, Physica D:
  Nonlinear Phenomena, 143 (2000), pp.~1--20.

\bibitem{HAT-AJL-SO:97b}
{\sc H.~A. Tanaka, A.~J. Lichtenberg, and S.~Oishi}, {\em First order phase
  transition resulting from finite inertia in coupled oscillator systems},
  Physical Review Letters, 78 (1997), pp.~2104--2107.

\bibitem{HAT-AJL-SO:97}
{\sc H.~A. Tanaka, A.~J. Lichtenberg, and S.~Oishi}, {\em Self-synchronization
  of coupled oscillators with hysteretic responses}, Physica D: Nonlinear
  Phenomena, 100 (1997), pp.~279--300.

\bibitem{PAT:03}
{\sc P.~A. Tass}, {\em A model of desynchronizing deep brain stimulation with a
  demand-controlled coordinated reset of neural subpopulations}, Biological
  Cybernetics, 89 (2003), pp.~81--88.

\bibitem{JLvH-WFW:93}
{\sc J.~L. van Hemmen and W.~F. Wreszinski}, {\em Lyapunov function for the
  {K}uramoto model of nonlinearly coupled oscillators}, Journal of Statistical
  Physics, 72 (1993), pp.~145--166.

\bibitem{MV-OM:08}
{\sc M.~Verwoerd and O.~Mason}, {\em Global phase-locking in finite populations
  of phase-coupled oscillators}, SIAM Journal on Applied Dynamical Systems, 7
  (2008), pp.~134--160.

\bibitem{MV-OM:09}
{\sc M.~Verwoerd and O.~Mason}, {\em On computing the critical coupling
  coefficient for the kuramoto model on a complete bipartite graph}, SIAM
  Journal on Applied Dynamical Systems, 8 (2009), pp.~417--453.

\bibitem{KW-PC-SHS:98}
{\sc K.~Wiesenfeld, P.~Colet, and S.~H. Strogatz}, {\em Frequency locking in
  {J}osephson arrays: {C}onnection with the {K}uramoto model}, Physical Review
  E, 57 (1998), pp.~1563--1569.

\bibitem{FZ:05}
{\sc F.~Zhang}, {\em The {S}chur Complement and its Applications}, Springer,
  2005.

\end{thebibliography}


\end{document}